\documentclass[reqno,12pt,a4paper]{amsart}

\usepackage{tikz}

\usepackage[mathscr]{eucal}
\usepackage{graphics,epic}
\usepackage{amsfonts}
\usepackage{amscd}
\usepackage{latexsym}
\usepackage{amsmath,amssymb, amsthm, stmaryrd, bm, bbm}
\usepackage[all,2cell]{xy}
\usepackage{rotating}

\newtheorem{theorem}{Theorem}[section]
\newtheorem*{theorem*}{Theorem}

\newtheorem{lemma}[theorem]{Lemma}
\newtheorem{proposition}[theorem]{Proposition}
\newtheorem{corollary}[theorem]{Corollary}

\newtheorem*{conjecture*}{Conjecture}

\newtheorem*{question*}{Question}
\theoremstyle{remark}
\newtheorem{remark}[theorem]{Remark}
\newtheorem{example}[theorem]{Example}
\theoremstyle{definition}
\newtheorem{definition}[theorem]{Definition}

\newcommand{\ie}{{\em i.e.~}\ }

\newcommand{\opname}[1]{\operatorname{\mathsf{#1}}}

\newcommand{\grmod}{\opname{grmod}\nolimits}
\newcommand{\proj}{\opname{proj}\nolimits}

\newcommand{\rad}{\opname{rad}\nolimits}
\renewcommand{\top}{\opname{top}\nolimits}
\newcommand{\coh}{\opname{coh}\nolimits}

\newcommand{\ind}{\opname{ind}}

\newcommand{\rep}{\opname{rep}\nolimits}

\newcommand{\cok}{\opname{cok}\nolimits}

\renewcommand{\ker}{\opname{ker}\nolimits}

\newcommand{\per}{\opname{per}\nolimits}

\newcommand{\Hom}{\opname{Hom}}
\newcommand{\End}{\opname{End}}
\newcommand{\Aut}{\opname{Aut}}

\newcommand{\Ext}{\opname{Ext}}

\newcommand{\ten}{\otimes}

\newcommand{\ca}{{\mathcal A}}

\newcommand{\cc}{{\mathcal C}}
\newcommand{\cd}{{\mathcal D}}

\newcommand{\ch}{{\mathcal H}}

\newcommand{\cp}{{\mathcal P}}

\newcommand{\fk}{{\mathrm k}}

\numberwithin{equation}{section}

\begin{document}

\title[The derived Hall algebra of a graded gentle one-cycle algebra II]{On the derived Hall algebra of a graded gentle one-cycle algebra II: Serre presentation}

\author{Hui Chen, and Dong Yang}

\address{Hui Chen, School of Biomedical Engineering and Informatics, Nanjing Medical University, Nanjing 211166, P. R. China.}
\email{huichen@njmu.edu.cn}

\address{Dong Yang, School of Mathematics, Nanjing University, Nanjing 210093, P. R. China}
\email{yangdong@nju.edu.cn}
\date{\today}
\begin{abstract}
We present Serre-type generators and relations for the derived Hall algebra of a graded gentle one-cycle algebra when a certain integral invariant is not $0$ or $\pm 1$.\\
{\bf MSC 2020:} 16G20, 16E35, 18G80\\
{\bf Key words:} graded gentle one-cycle algebra, derived Hall algebra, twisted root category
\end{abstract}
\maketitle

\section{Introduction}
\label{s:intro}

Gentle algebras were introduced in \cite{AssemHappel81,AssemSkowronski87} to study finite-dimensional algebras which are derived equivalent to path algebras of type $\mathbb{A}$ and type $\tilde{\mathbb{A}}$. They also appeared in the classification of derived-discrete algebras \cite{Vossieck01,FushimiXing26} and as 2-Calabi--Yau-tilted algebras \cite{AssemBruestleCharbonneauPlamondon10}. In the past years due to the work \cite{HaidenKatzarkovKontsevich17} on topological Fukaya categories of graded marked surfaces graded gentle algebras have attracted the attention of many mathematicians, see for example \cite{LekiliPolishchuk20,OpperPlamondonSchroll18,JinSchrollWang23,Opper25} and the references therein.

In this paper, we describe the derived Hall algebra of a graded gentle one-cycle algebra by providing generators and relations. The AG-invariant of a graded gentle one-cycle algebra $A$ is either $\{(p_1,p_1+d),(p_2,p_2-d)\}$ for some integers $p_1,p_2\geq 1$ and $d\geq 0$, exactly when $A$ has finite global dimension, or $\{(q,q-d),(0,d)\}$ for some integers $q\geq 1$ and $d\in\mathbb{Z}$, exactly when $A$ has infinite global dimension, see for example \cite[Lemma 6.2]{ChenYang25}. If $d\neq 0$, then the number of connected components of the Auslander--Reiten quiver of $\per(A)$ is $3|d|$, when $A$ has finite global dimension, or $|d|$, when $A$ has infinite global dimension. For integers $p_1,p_2\geq 1$, $d\geq 2$ and $r\in\mathbb{Z}$, let $\ch(p_1,p_2,d,r)$ be the $\mathbb{Q}$-algebra generated by $\{x_{i,j}\mid i\in\mathbb{Z},~j=0,1,\ldots,d-1\}$ subject to relations given in Section~\ref{s:derived-Hall-of-zigzag} after Theorem~\ref{thm-for-zigzag}. For integers $d\geq 2$ and $r\in\mathbb{Z}$, let $\ch(0,1,d,r)$ be the $\mathbb{Q}$-algebra generated by $\{x_{i,j}\mid i\in\mathbb{Z},~j=0,1,\ldots,d-1\}$ subject to relations given in Section~\ref{ss:derived-Hall-of-linear-bounded} after Theorem~\ref{thm-for-linear}.

\begin{theorem}
\label{main-theorem}
Let $A$ be a graded gentle one-cycle algebra and $\ch(\per(A))$ be the derived Hall algebra of $\per(A)$.
\begin{itemize}
\item[(a)] Assume that the AG-invariant of $A$ is $\{(p_1,p_1+d),(p_2,p_2-d)\}$ for some integers $p_1,p_2\geq 1$ and $d\geq 2$. Then $\ch(\per(A))$ is isomorphic to $\ch(p_1,p_2,d,-1)$.
\item[(b)] Assume that the AG-invariant of $A$ is $\{(q,q-d),(0,d)\}$ for some integers $q\geq 1$ and $|d|\geq 2$. Then $\ch(\per(A))$ is isomorphic to $\ch(0,1,|d|,\mathrm{sgn}(d)q)$.
\end{itemize}
\end{theorem}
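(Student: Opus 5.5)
We reduce the statement to the two computations announced in the introduction, Theorem~\ref{thm-for-zigzag} and Theorem~\ref{thm-for-linear}, using derived invariance. The derived Hall algebra $\ch(\per(A))$ depends only on $\per(A)$ up to triangle equivalence, and the Hall numbers, Euler forms and the structure of the Auslander--Reiten quiver are all intrinsic to the triangulated category. By the derived classification of graded gentle algebras via the AG-invariant (in the one-cycle case, the classification recalled in \cite{ChenYang25}), two graded gentle one-cycle algebras with the same AG-invariant are derived equivalent. Hence it suffices to treat one normal-form representative for each AG-invariant.

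\textbf{Case (a).} Here the AG-invariant is $\{(p_1,p_1+d),(p_2,p_2-d)\}$ with $d\geq 2$. We choose the representative used in Section~\ref{s:derived-Hall-of-zigzag}: a graded gentle algebra on an $\tilde{\mathbb{A}}_{p_1,p_2}$-type quiver with zigzag/relation pattern and grading arranged so that the invariant comes out right. We verify the AG-invariant of this algebra directly by running the AG-algorithm, or by citing \cite[Lemma 6.2]{ChenYang25}. Theorem~\ref{thm-for-zigzag} then identifies its derived Hall algebra with $\ch(p_1,p_2,d,-1)$.

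\textbf{Case (b).} Here the AG-invariant is $\{(q,q-d),(0,d)\}$. We take the representative of Section~\ref{ss:derived-Hall-of-linear-bounded}: a linearly oriented cycle with relations and a suitable grading. When $d<0$, we either reverse the orientation (pass to the opposite algebra) or apply the shift/duality. This should replace $d$ by $|d|$ and flip the sign of the parameter $r$, which accounts for the factor $\mathrm{sgn}(d)$ in $r=\mathrm{sgn}(d)q$. Theorem~\ref{thm-for-linear} then gives $\ch(0,1,|d|,\mathrm{sgn}(d)q)$.

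\textbf{Main obstacle.} The substantive content lies in Theorems~\ref{thm-for-zigzag} and \ref{thm-for-linear}, which I assume. The point needing care here is the sign bookkeeping in case (b). We must check that the duality or opposite-algebra passage (which yields an anti-isomorphism of Hall algebras $\ch(\cc)^{op}\cong\ch(\cc^{op})$), composed with the appropriate identification, indeed produces the relations of $\ch(0,1,|d|,-q)$ rather than some other twist. Concretely, I would match the generators $x_{i,j}$ under the Auslander--Reiten components and check the relations involving $r$ one by one.
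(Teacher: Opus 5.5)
Your proposal rests on a misreading of Theorems~\ref{thm-for-zigzag} and~\ref{thm-for-linear}, so the step that actually carries the proof is missing. Those theorems are not about any gentle algebra: Sections~\ref{s:derived-Hall-of-zigzag} and~\ref{ss:derived-Hall-of-linear-bounded} contain no ``normal-form representative'' on an $\tilde{\mathbb{A}}_{p_1,p_2}$-quiver or on an oriented cycle. They compute $\ch(\cc)$ for the twisted root categories $\cd^b(\rep^b(Q_{p_1,p_2}))/\Sigma^d\circ\sigma^{-r}$ and $\cd^b(\rep^b(Q^l))/\Sigma^d\circ\sigma^{-r}$ of the \emph{infinite} quivers $Q_{p_1,p_2}$ and $Q^l$ of type $\mathbb{A}_\infty^\infty$. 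So the sentence ``Theorem~\ref{thm-for-zigzag} then identifies its derived Hall algebra with $\ch(p_1,p_2,d,-1)$'' has no content: nothing in your argument relates $\per(A)$, for any $A$, to these orbit categories.

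The missing ingredient is the triangle equivalence of \cite[Theorem 6.6]{ChenYang25} (Theorem~\ref{thm:from-gentle-one-cycle-to-root-category}). It gives $\per(A)\simeq\cd^b(\rep^b(Q_{p_1,p_2}))/\Sigma^d\sigma^{-1}$ in case (a), and $\per(A)\simeq\cd^b(\rep^b(Q^l))/\Sigma^{|d|}\sigma^{-\mathrm{sgn}(d)q}$ in case (b). With it the paper's proof is immediate: derived Hall algebras are invariant under triangle equivalence (the correct part of your opening paragraph), and Theorems~\ref{thm-for-zigzag} and~\ref{thm-for-linear} finish.

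The equivalence holds for every $A$ with the given AG-invariant, so the detour through the derived classification and a normal form buys nothing. It is the equivalence, not a choice of representative, that fixes the parameters: the power of $\Sigma$ gives $d$ (resp.\ $|d|$), and the power of $\sigma$, matched against the convention $\Sigma^d\circ\sigma^{-r}$, gives $r$. This matching is not cosmetic in case (a). Read literally against that convention, $\Sigma^d\sigma^{-1}$ is the case $r=1$, which is also how the remark after Theorem~\ref{thm:derived-hall-algebra} reads it. So the sign of $r$ there must be tracked through the conventions of \cite{ChenYang25}, not asserted.

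In case (b) your mechanism for the sign is both unnecessary and wrong. It is unnecessary because Theorem~\ref{thm-for-linear} is proved for every $r\neq 0$: on $\rep^b(Q^l)$ every power $\sigma^r$ satisfies (HF), and the restriction $r>0$ only matters for $\rep^+(Q^l)$. The equivalence directly produces $\Sigma^{|d|}\sigma^{-\mathrm{sgn}(d)q}$, i.e.\ $r=\mathrm{sgn}(d)q$, with no separate treatment of $d<0$.

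It is wrong because passing to the opposite algebra does not turn $d<0$ into $d>0$. The graded dual numbers $A=k[x]/(x^2)$ have AG-invariant $\{(1,|x|),(0,1-|x|)\}$, so $d=1-|x|\leq -2$ once $|x|\geq 3$, yet $A^{\mathrm{op}}\cong A$. Shifts are autoequivalences of $\per(A)$ and change nothing either. Even where an opposite-algebra argument applies, $\ch(\per(A^{\mathrm{op}}))\cong\ch(\per(A))^{\mathrm{op}}$ only yields an anti-isomorphism. You would then still owe an identification of $\ch(0,1,|d|,q)^{\mathrm{op}}$ with $\ch(0,1,|d|,-q)$, which you have not supplied. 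The ``main obstacle'' you single out is thus an artifact of the missing equivalence; the real content of the reduction is Theorem~\ref{thm:from-gentle-one-cycle-to-root-category}.
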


We have several remarks. First, the case $d=0$ is exceptional, see Remark~\ref{rem:d=0}; the case $|d|=1$ is also exceptional and will be treated in a separate paper. Secondly, we also have a version of Theorem~\ref{main-theorem} (b) for $\cd_{fd}(A)$ (Theorem~\ref{thm:derived-hall-algebra} (c)), the finite-dimensional derived category of $A$, provided that $\cd_{fd}(A)$ is left locally homologically finite (this happens exactly when $d>0$). Thirdly, if $A=k[x]/(x^2)$ is the graded algebra of dual numbers (whose AG-invariant is $\{(1,|x|),(0,1-|x|)\}$), Theorem~\ref{main-theorem} (b) was given in \cite{KellerYangZhou09}; if $A$ is derived equivalent to a gentle one-cycle algebra concentrated in degree $0$, Theorem~\ref{main-theorem} (b) (including the case $|d|=1$) as well as its $\cd_{fd}(A)$-version was given in \cite{BobinskiSchmude20}. Finally, the key point of our proof is that $\per(A)$ is triangle equivalent to the twisted root category of a certain infinite quiver $Q$ of type $\mathbb{A}_\infty^\infty$, a result established in \cite{ChenYang25}. Therefore we study derived Hall algebras of general twisted root categories. This does not only provide us with standard generators for $\ch(\per(A))$, namely, the shifts of simple representations of $Q$, but also greatly simplifies our computation of relations. More precisely, we show that all relations involve only two shifted simple representations: if they are shifted by the same degree, then the relation is the corresponding relation in the Ringel--Hall algebra of the category of finite-dimensional representations of $Q$ twisted by a certain anti-symmetric orbital Euler form (therefore these relations are twisted quantum Serre relations of type $\mathbb{A}$), and if they are shifted by different degrees then the relation is a quantum commutative relation, related to the same anti-symmetric orbital Euler form. Therefore, the computation of relations is reduced to the computation of the anti-symmetric orbital Euler form.

The structure of the paper is as follows. In Section~\ref{s:quiver-generalised-zigzag-linear} we recall and study two quivers of type $\mathbb{A}_\infty^\infty$, including their Ringel--Hall algebras and values of certain orbital Euler forms for simple and indecomposable projective representations. In Section~\ref{s:twisted-root-category} we recall the definition of twisted root categories and study their basic properties and their derived Hall algebras. Based on the results in Section~\ref{s:twisted-root-category}, we provide generators and relations for derived Hall algebras of twisted root categories of quivers of type $\mathbb{A}_\infty^\infty$ with generalised zigzag orientation in Section~\ref{s:derived-Hall-of-zigzag} and of the quiver of type $\mathbb{A}_\infty^\infty$ with linear orientation in Section~\ref{s:derived-Hall-of-linear}. Section~\ref{s:derive-Hall-algebra-of-graded-gentle-one-cycle-algebra} is devoted to the proof of Theorem~\ref{main-theorem}  and the $\cd_{fd}$-version of Theorem~\ref{main-theorem} (b).

\smallskip
Throughout the paper, let $q$ be a prime power and $\fk=\mathbb{F}_q$ be the finite field with $q$ elements. For a $\fk$-category $\ca$ and an object $X$ of $\ca$, denote by $[X]$ the isoclass of $X$, by $\Aut(X)$ the automorphism group of $X$ and denote by $\cp(\ca)$ the set/class of isoclasses of objects of $\ca$, by $\ind(\ca)$ the set/class of isoclasses of indecomposable objects of $\ca$.  For a finite set $S$, denote by $|S|$ the cardinality of $S$. Let $\Sigma$ denote the shift functor of any triangulated category. For a rational number $x$, we denote by $\mathrm{sgn}(x)$ the sign of $x$.

\medskip
\noindent{\it Aknowledgement.} The authors thank Jie Xiao and Bangming Deng for their constant encouragement and they thank Haicheng Zhang for patiently answering their questions. The second-named author acknowledges support by National Key R\&D Program of China 2024YFA1013801.

\section{The quivers $Q_{p_1,p_2}$ and $Q^l$}
\label{s:quiver-generalised-zigzag-linear}
In this section, we recall the definition of the Ringel--Hall algebra of an abelian category and introduce the orbital Euler form associated with an automorphism. The main objects of study are the following two quivers of type $\mathbb{A}_\infty^\infty$:
\[
{\scriptsize
\begin{xy} 0;<0.75pt,0pt>:<0pt,-0.55pt>::
(-25,70) *+{Q_{p_1,p_2}:}="", 
(-25,90) *+{(p_1,p_2\geq 1)}="", 
(15,75) *+{\cdots}="",
(30,150) *+{\cdot}="0",
(60,100) *+{\cdot}="1",
(90,50) *+{\cdot}="2",
(120,0) *+{-p_2-p_1}="3",
(150,50) *+{-p_2-p_1+1}="4",
(180,100) *+{-p_1-1}="5",
(210,150) *+{-p_1}="6",
(240,100) *+{-p_1+1}="7",
(270,50) *+{-1}="8",
(300,0) *+{0}="9",
(330,50) *+{1}="10",
(360,100) *+{p_2-1}="11",
(390,150) *+{p_2}="12",
(420,100) *+{p_2+1}="13",
(450,50) *+{p_2+p_1-1}="14",
(480,0) *+{q_2+p_1}="15",
(495,75) *+{\cdots}="",
"1", {\ar "0"}, "2", {\ar@{.} "1"}, "3", {\ar "2"},
"3", {\ar "4"}, "4", {\ar@{.} "5"}, "5", {\ar "6"},
"7", {\ar "6"}, "8", {\ar@{.} "7"}, "9", {\ar "8"},
"9", {\ar "10"}, "10", {\ar@{.} "11"}, "11", {\ar "12"},
"13", {\ar "12"}, "14", {\ar@{.} "13"}, "15", {\ar "14"},
\end{xy}
}
\]
with generalised zigzag orientation and with the quiver-automorphism $\sigma=\sigma_{p_1,p_2}$ which takes a vertex $i\in\mathbb{Z}$ to $i-p_1-p_2$, and
\[Q^l: \xymatrix{\ldots\ar[r]&i-1\ar[r]&i\ar[r]&i+1\ar[r]&\ldots}\]
with linear orientation and with quiver-automorphism $\sigma=\sigma_{0,1}$ which takes the vertex $i$ to $i-1$. Precisely, we recall the representation theory of the quivers $Q_{p_1,p_2}$ and $Q^l$ defined below, describe their Ringel--Hall algebras and study the values of the orbital Euler form associated with a power of $\sigma$ for simple and projective representations.

\subsection{Quivers and their representations}
\label{ss:quivers}

Let $Q$ be a (possibly infinite) quiver with vertex set $Q_0$ and arrow set $Q_1$. For an arrow $\alpha$, denote by $s(\alpha)$ the source of $\alpha$ and by $t(\alpha)$ the target of $\alpha$. Assume that $Q$ is \emph{strongly locally finite} in the sense of \cite[Section 1]{BautistaLiuPaquette13}, that is, for any $i\in Q_0$, the number of arrows with source $i$ is finite and the number of arrows with target $i$ is also finite; moreover, the number of paths between any two vertices is finite. This implies that $Q$ has no oriented cycles. For a representation $V$ of $Q$ (over $\fk$), let $V_i$ denote the vector space associated with a vertex $i$ and let $V_\alpha$ denote the $\fk$-linear map associated with an arrow $\alpha$. $V$ is said to be \emph{finite-dimensional} if $\bigoplus_{i\in Q_0} V_i$ is finite-dimensional over $\fk$. The \emph{simple representation} $S_i$ at a vertex $i$ is the representation with $(S_i)_i=\fk$, $(S_i)_j=0$ for $j\neq i$ and $(S_i)_\alpha=0$ for any $\alpha\in Q_1$.  Denote by $\rep^b(Q)$ the category of finite-dimensional representations of $Q$. This is a Hom-finite and Ext-finite skeletally small hereditary abelian $\fk$-category. For each $i\in Q_0$, we define a representation $P_i$ of $Q$ as follows. For $j\in Q_0$, the vector space $(P_i)_j$ is the vector space spanned by paths from $i$ to $j$; and for $\alpha\in Q_1$, the $k$-linear map $(P_i)_{\alpha}\colon (P_i)_{s(\alpha)}\to (P_i)_{t(\alpha)}$ takes a path $p$ to $\alpha p$. A representation $V$ of $Q$ is said to be \emph{finitely presented} if there is a short exact sequence $0\to P^{-1}\to P^0\to V\to 0$, where $P^{-1},~P^0\in\proj(Q)$, the category of finite direct sums of $P_i$'s, $i\in Q_0$. According to \cite[Proposition 1.15]{BautistaLiuPaquette13}, $\rep^+(Q)$ is a Hom-finite and Ext-finite hereditary abelian $k$-category. Moreover, $\rep^+(Q)$ contains $\rep^b(Q)$ as a full subcategory.

Define the \emph{Euler form}
\[
\langle V,W\rangle:=\dim \Hom_Q(V,W)-\dim \Ext^1_Q(V,W),
\]
where $V,W\in \rep^+(Q)$.
For two vertices $i$ and $j$, the dimension of $\Hom_Q(S_i,S_j)$ is the Kronecker symble $\delta_{ij}$ and the dimension of $\Ext^1_Q(S_i,S_j)$ equals the number of arrows from $i$ to $j$, so we have
\[
\langle S_i,S_j\rangle=\begin{cases} 1 & i=j\\ -|\text{arrows }i\to j| & i\neq j\end{cases}.
\]
For example, for the quiver of type $\mathbb{A}_n$ with linear orientation
\[\xymatrix{1\ar[r]&2\ar[r]&\ldots\ar[r]&n-1\ar[r]&n},\]
the values of the Euler form for simple representations are
\[\langle S_i,S_{j}\rangle=\begin{cases} 1 & \text{if } j=i \\
    -1 & \text{if } j=i+1 \\
    0 & \text{otherwise}
  \end{cases}.\]

Let $Q$ be a quiver of type $\mathbb{A}_\infty^\infty$, \ie its underlying graph is
\[\xymatrix{\ldots\ar@{-}[r]&i-1\ar@{-}[r]&i\ar@{-}[r]&i+1\ar@{-}[r]&\ldots}\]
For $a,b\in\{-\infty\}\cup\mathbb{Z}\cup\{+\infty\}$ with $a\leq b$, define a representation $V_{a,b}$ of $Q$ by
\begin{align*}(V_{a,b})_i&=\begin{cases} \fk & \text{if } a\leq i\leq b\\ 0 & \text{otherwise}\end{cases} \text{ for }i\in\mathbb{Z},\\
(V_{a,b})_\alpha&=\begin{cases} \mathrm{id}_{\fk} &\text{if } a\leq i,j\leq b\\ 0 & \text{otherwise}\end{cases} \text{ for an arrow }\alpha\colon i\to j.
\end{align*}
The endomorphism algebras of these representations are isomorphic to $\fk$ and hence they are indecomposable. They form a complete set of pairwise non-isomorphic indecomposable representations of $Q$, and $V_{a,b}$ ($a,b\in\mathbb{Z}$, $a\leq b$) form a complete set of pairwise non-isomorphic indecomposable finite-dimensional representations of $Q$. This can be obtained for example by the general theory in \cite[Section 5]{BautistaLiuPaquette13}.  Any finite-dimensional representation of $Q$ is supported on a finite connected subquiver, which is of type $\mathbb{A}_n$ for some $n$, and hence the above classification for finite-dimensional representations can also be obtained locally, see for example \cite{HouYe2006} and \cite{HouYe2008} for more details. Note that for $i\in\mathbb{Z}$ the simple representation $S_i$ is exactly $V_{i,i}$ and the projective representation $P_i$ is exactly $V_{i,+\infty}$. For the quiver $Q_{p_1,p_2}$ we have $\rep^+(Q_{p_1,p_2})=\rep^b(Q_{p_1,p_2})$, and for the quiver $Q^l$, the indecomposable finitely presented representations which are not finite-dimensional are precisely the $P_i$'s, $i\in\mathbb{Z}$.

Let $p_1,p_2\geq 1$ and $n=p_1+p_2$. For an integer $i\in \mathbb{Z}$, denote by $\overline{i}$ the unique integer in the interval $[-p_1,p_2)$ such that $n|(i-\overline{i})$.
In the following lemma we list the Hom-spaces, Ext-spaces and the Euler form between simple representations of $Q_{p_1,p_2}$.

\begin{lemma}
\label{lem:Euler-form-zigzag}
For $i,j\in\mathbb{Z}$, we have $\Hom_{Q_{p_1,p_2}}(S_i,S_j)=\begin{cases} \fk & j=i\\ 0 &j\neq i\end{cases}$; moreover,
\begin{itemize}
\item[(1)] if $\overline{i}=-p_1$, then
\[
\Ext^1_{Q_{p_1,p_2}}(S_i,S_j)=0~~\text{and}~~\langle S_i,S_j\rangle=\begin{cases} 1 & j=i\\ 0 &j\neq i\end{cases};
\]
\item[(2)] if $\overline{i}\in (-p_1,0)$, then
\[
\Ext^1_{Q_{p_1,p_2}}(S_i,S_j)=\begin{cases} \fk & j=i-1\\ 0 & j\neq i-1\end{cases}~~\text{and}~~\langle S_i,S_j\rangle=\begin{cases} 1 & j=i\\ -1 & j=i-1\\ 0 &\text{otherwise}\end{cases};
\]
\item[(3)] if $\overline{i}=0$, then
\[
\Ext^1_{Q_{p_1,p_2}}(S_i,S_j)=\begin{cases} \fk & j=i\pm1\\ 0 & j\neq i\pm1\end{cases}~~\text{and}~~\langle S_i,S_j\rangle=\begin{cases} 1 & j=i\\ -1 & j=i\pm1\\ 0 &\text{otherwise}\end{cases};
\]
\item[(4)] if $\overline{i}\in (0,p_2)$, then
\[
\Ext^1_{Q_{p_1,p_2}}(S_i,S_j)=\begin{cases} \fk & j=i+1\\ 0 & j\neq i+1\end{cases}~~\text{and}~~\langle S_i,S_j\rangle=\begin{cases} 1 & j=i\\ -1 & j=i+1\\ 0 &\text{otherwise}\end{cases}.
\]
\end{itemize}
\end{lemma}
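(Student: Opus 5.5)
The lemma is a direct consequence of the general facts recalled in Subsection~\ref{ss:quivers}: $\dim\Hom_Q(S_i,S_j)=\delta_{ij}$, $\dim\Ext^1_Q(S_i,S_j)$ equals the number of arrows from $i$ to $j$, and hence $\langle S_i,S_j\rangle=\delta_{ij}-|\text{arrows }i\to j|$. The Hom statement holds for any quiver and needs no further comment. So the proof reduces to reading off, for each vertex $i$ of $Q_{p_1,p_2}$, the set of arrows with source $i$. Since the quiver is of type $\mathbb{A}_\infty^\infty$, any arrow with source $i$ goes to $i-1$ or $i+1$, and there is at most one arrow between any two adjacent vertices.

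First I would use the automorphism $\sigma=\sigma_{p_1,p_2}$, $i\mapsto i-n$. It is a quiver automorphism, so the arrows out of $i$ correspond bijectively, via $\sigma^k$, to the arrows out of $\overline{i}$. The shape of each case is invariant under the shift by $n$ (the target is $i\pm1$ exactly when it is $\overline{i}\pm1$ after translating). Hence it suffices to treat $i\in[-p_1,p_2)$, where $\overline{i}=i$.

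Next I would read the orientation on one fundamental domain from the picture of $Q_{p_1,p_2}$.
\begin{itemize}
\item On the segment $[-p_1,0]$ the arrows point leftwards, $i\to i-1$; this includes the arrow $-p_1+1\to -p_1$, the dotted ones, and $0\to -1$.
\item On $[0,p_2]$ they point rightwards, $i\to i+1$.
\item Beyond $p_2$ the pattern repeats, shifted by $n$: $p_2$ is a sink, the translate of $-p_1$, and $n$ is a source, the translate of $0$.
\end{itemize}
Consequently:
\begin{itemize}
\item $-p_1$ is a sink: both neighbours point into it. This gives case (1).
\item For $-p_1<i<0$ the only outgoing arrow is $i\to i-1$, since the arrow between $i$ and $i+1$ is $i+1\to i$. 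This gives case (2).
\item $0$ is a source with arrows to $\pm1$. This gives case (3).
\item For $0<i<p_2$ the only outgoing arrow is $i\to i+1$, since the arrow between $i-1$ and $i$ is $i-1\to i$. This gives case (4).
\end{itemize}

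Finally I would substitute these arrow counts into the formulas above to obtain the stated Ext-spaces and Euler form values. The only step needing care is the boundary bookkeeping. One must check that the arrows at the ends of each segment, namely those adjacent to $-p_1$ and $0$, and that between $p_2-1$ and $p_2$, have the claimed orientation. One must also handle the degenerate cases $p_1=1$ or $p_2=1$, where the open intervals in (2) or (4) are empty, and confirm that the statements remain consistent there. I expect no real obstacle beyond this careful reading of the quiver.
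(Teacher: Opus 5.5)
Your proposal is correct and is essentially the paper's own argument. The paper gives no separate proof: it states the lemma right after recalling that $\dim\Hom(S_i,S_j)=\delta_{ij}$ and that $\dim\Ext^1(S_i,S_j)$ is the number of arrows $i\to j$, so the lemma amounts to reading off the outgoing arrows of $Q_{p_1,p_2}$ on the fundamental domain $[-p_1,p_2)$ for $\sigma$. Your reading of the orientation matches the picture: sinks at $-p_1+n\mathbb{Z}$, sources at $n\mathbb{Z}$, arrows pointing left on $[-p_1,0]$ and right on $[0,p_2]$.
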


In other words, the non-trivial Ext-spaces are $\Ext^1(S_i,S_{i-1})=\fk$ for any $i\in\mathbb{Z}$ with $\overline{i}\in (-p_1,0]$ and $\Ext^1(S_i,S_{i+1})=\fk$ for any $i\in\mathbb{Z}$ with $\overline{i}\in [0,p_2)$; the non-zero $\langle S_i,S_j\rangle$'s  are $\langle S_i,S_{i}\rangle=1$ for any $i\in\mathbb{Z}$, $\langle S_i,S_{i-1}\rangle=-1$ for any $i\in\mathbb{Z}$ with $\overline{i}\in (-p_1,0]$ and $\langle S_i,S_{i+1}\rangle=-1$ for any $i\in\mathbb{Z}$ with $\overline{i}\in [0,p_2)$.

\smallskip
Next we list the Hom-spaces, Ext-spaces and the Euler form between simple and projective representations of $Q^l$.

\begin{lemma}
	\label{lem:Euler-form-linear}
	For $i,j\in\mathbb{Z}$, we have
	
	\begin{itemize}
		\item[(1)] $\Hom_{Q^l}(S_i,S_j)=\begin{cases} \fk & j=i\\ 0 &j\neq i\end{cases},~~\Ext^1_{Q^l}(S_i,S_j)=\begin{cases} \fk & j=i+1\\ 0 & j\neq i+1\end{cases}~~\text{and}\\\langle S_i,S_j\rangle=\begin{cases} 1 & j=i\\ -1 & j=i+1\\ 0 &\text{otherwise}\end{cases};$
		\item[(2)] $
		\Hom_{Q^l}(P_i,P_j)=\begin{cases} \fk & j\leq i\\ 0 &j> i\end{cases},~~\Ext^1_{Q^l}(P_i,P_j)=0~~\text{and}~~\langle P_i,P_j\rangle=\begin{cases} 1 & j\leq i\\ 0 &j>i\end{cases};$
		\item[(3)] $	\Hom_{Q^l}(S_j,P_i)=0,~~\Ext^1_{Q^l}(S_j,P_i)=\begin{cases} \fk & j=i-1\\ 0 & j\neq i-1\end{cases}~~\text{and}~~\langle S_j, P_i\rangle=\begin{cases} -1 & j=i-1\\ 0 & j\neq i-1\end{cases};$
		\item[(4)] $
		\Hom_{Q^l}(P_i,S_j)=\begin{cases} \fk & j=i\\ 0 &j\neq i\end{cases},~~\Ext^1_{Q^l}(P_i,S_j)=0~~\text{and}~~\langle P_i,S_j\rangle=\begin{cases} 1 & j=i\\ 0 &j\neq i\end{cases}.$
		
	\end{itemize}
\end{lemma}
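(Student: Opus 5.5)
The plan is to use that $Q^l$ is strongly locally finite with arrows $i\to i+1$, so $\rep^+(Q^l)$ is hereditary and $\proj(Q^l)$ consists of projective objects there. Then $\Ext^1_{Q^l}(P_i,-)=0$ and $\Hom_{Q^l}(P_i,V)\cong V_i$ by the Yoneda-type identification of $P_i$ via paths starting at $i$. Once all Hom- and Ext-dimensions are known, each Euler form value is simply their difference.

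\textbf{Part (1).} This is the general computation recorded before the lemma. Hom between simples is $\delta_{ij}$, and $\dim\Ext^1(S_i,S_j)$ equals the number of arrows $i\to j$. In $Q^l$ there is exactly one such arrow when $j=i+1$ and none otherwise, which gives the formula for $\langle S_i,S_j\rangle$.

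\textbf{Parts (2) and (4).} Since $P_i$ is projective, all Ext groups out of $P_i$ vanish. For Hom we use $\Hom(P_i,V)\cong V_i$. For $V=P_j=V_{j,+\infty}$ the space $(P_j)_i$ is nonzero, and one-dimensional, exactly when $j\le i$. For $V=S_j$ the space $(S_j)_i$ is $\fk$ when $i=j$ and zero otherwise.

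\textbf{Part (3).} We need $\Hom(S_j,P_i)=0$. The representation $P_i=V_{i,+\infty}$ has all structure maps equal to the identity on its support, so its socle is zero. Concretely, any vector at vertex $m\ge i$ maps nonzero to vertex $m+1$, while $S_j$ is killed by every arrow, so every morphism $S_j\to P_i$ is zero.

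For Ext, apply $\Hom(-,P_i)$ to the projective resolution
\[0\to P_{j+1}\to P_j\to S_j\to 0,\]
which is exact because $P_j=V_{j,+\infty}$ has radical $V_{j+1,+\infty}=P_{j+1}$. This gives
\[\Ext^1(S_j,P_i)\cong \cok\big(\Hom(P_j,P_i)\to\Hom(P_{j+1},P_i)\big).\]
By (2), $\Hom(P_{j+1},P_i)=\fk$ iff $i\le j+1$, and $\Hom(P_j,P_i)=\fk$ iff $i\le j$. When $i\le j$ both spaces are $\fk$ and the map is a nonzero map $\fk\to\fk$, hence an isomorphism. This holds because the map is precomposition with the inclusion $P_{j+1}\hookrightarrow P_j$, and the inclusion $P_j\hookrightarrow P_i$ restricts nonzero to $P_{j+1}$.

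So the cokernel is nonzero only when $i=j+1$, that is $j=i-1$, where it is $\fk$. The Euler form values follow.

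The only real subtlety is justifying projectivity of $P_i$ in $\rep^+(Q^l)$ together with the Yoneda isomorphism. This is standard for strongly locally finite quivers and is covered by \cite[Proposition 1.15]{BautistaLiuPaquette13}; everything after that is routine bookkeeping.
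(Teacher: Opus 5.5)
Your proposal is correct: the Yoneda isomorphism $\Hom_{Q^l}(P_i,V)\cong V_i$ gives all Hom-spaces out of $P_i$, and projectivity kills $\Ext^1_{Q^l}(P_i,-)$. Injectivity of the structure maps of $P_i=V_{i,+\infty}$ gives $\Hom_{Q^l}(S_j,P_i)=0$, and applying $\Hom_{Q^l}(-,P_i)$ to $0\to P_{j+1}\to P_j\to S_j\to 0$ gives $\Ext^1_{Q^l}(S_j,P_i)\cong\fk$ exactly when $j=i-1$. The paper states this lemma without proof, treating it as a routine check from $S_i=V_{i,i}$, $P_i=V_{i,+\infty}$ and the general formula for extensions between simples, so your argument simply supplies the omitted verification.
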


\subsection{Ringel--Hall algebras}\label{ss:Ringel-Hall-algebra}
Let $\mathcal{A}$ be a Hom-finite and Ext-finite skeletally small hereditary abelian $\fk$-category.
The \emph{Ringel--Hall algebra} $\mathcal{H(A)}$ of $\mathcal{A}$ is the  $\mathbb{Q}$-vector space with basis $\mathcal{P(A)}$, the set of isoclasses of objects of $\ca$, and for any $[M], [N]\in \mathcal{P(A)}$, the multiplication is defined by
\[[M]\diamond [N]=\sum_{[L]\in\cp(\ca)}g_{MN}^L [L],\]
where $g_{MN}^L:=|\{U\subseteq L\mid L/U\cong M, U\cong N\}|$, called the \emph{Hall number}. Note that if $g_{MN}\neq 0$, then $g_{MN}^L=\frac{|\Hom_\ca(N,L)_M|}{|\Aut_\ca(N)|}$, where $\Hom_\ca(N,L)_M$ is the set of morphisms $N\to L$ with cokernel $M$. By \cite{Ringel90}, $\mathcal{H(A)}$ is an associative $\mathbb{Z}$-algebra with unit the isoclass $[0]$ of the zero object.

\begin{remark}
For objects $L,M,N\in \mathcal{A}$, there is the Riedtmann--Peng formula \cite{Riedtmann94,Peng97},
\[g_{MN}^L=\frac{|\Ext_{\mathcal{A}}(M,N)_L|}{|\Hom_{\mathcal{A}}(M,N)|}\cdot \frac{|\Aut_\ca (L)|}{|\Aut_\ca(M)|\cdot|\Aut_\ca(N)|},\]
where $\Ext^{1}_{\mathcal{A}}(M,N)_L$ denote the subset of $\Ext^{1}_{\mathcal{A}}(M,N)$ which consists of the equivalence classes of short exact sequences with middle term $L$.
\end{remark}

\begin{example}
Let $Q=\xymatrix{1\ar[r]&2}$. Recall that $P_1$ is the projective cover of the simple representation $S_1$. In $\mathcal{H}(\rep^b(Q))$ we have
\begin{align*}
  {[S_1]\diamond [S_2]} &= {[P_1]+[S_1\oplus S_2]}, \\
  {[S_2]\diamond [S_1]} &= [S_1\oplus S_2],
\end{align*}
and further
\begin{align*}
[S_1]^{\diamond 2}\diamond [S_2] &= (q+1)[P_1\oplus S_1]+(q+1)[S_1^{\oplus 2}\oplus S_2],\\
[S_1]\diamond [S_2]\diamond [S_1] &= [P_1\oplus S_1] +(q+1)[S_1^{\oplus 2}\oplus S_2],\\
[S_2]\diamond [S_1]^{\diamond 2} &= (q+1)[S_1^{\oplus 2}\oplus S_2].
\end{align*}
Therefore
  \begin{equation*}
   [S_1]^{\diamond2}\diamond[S_{2}]-(1+q)[S_1]\diamond[S_{2}]\diamond[S_1]+q[S_{2}]\diamond[S_1]^{\diamond2}.
  \end{equation*}
Actually, let $\ch$ be the $\mathbb{Q}$-algebra generated by $\{x_1, x_2\}$ subject to relations
\begin{align*}
    x_1x_2^2-(1+q)x_2 x_1x_2+qx_2^2x_1,\\
   x_1^2 x_2-(1+q) x_1 x_2 x_1+qx_2x_1^2.
  \end{align*}
Then the assignment $x_i\mapsto [S_i]$ ($i=1,2$) extends to an isomorphism $\ch\to\ch(\rep^b(Q))$ of $\mathbb{Q}$-algebras.
\end{example}

Next we describe the structure of the Ringel--Hall algebras $\mathcal{H}(\rep^b(Q_{p_1,p_2}))$, $\mathcal{H}(\rep^b(Q^l))$ and $\mathcal{H}(\rep^+(Q^l))$, where $Q_{p_1,p_2}$ and $Q^l$ are the quivers introduced in the beginning of this section.

First, let $\ch^{l}$ be the $\mathbb{Q}$-algebra generated by $\{x_i\}_{i\in\mathbb{Z}}$, subject to relations
\begin{itemize}
\item for $i,j\in\mathbb{Z}$ with $|i-j|>1$: $x_ix_j-x_jx_i$,
\item for $i\in\mathbb{Z}$:
\begin{gather*}
x_ix_{i+1}^2-(1+q)x_{i+1}x_ix_{i+1}+qx_{i+1}^2x_i,\\
x_i^2x_{i+1}-(1+q)x_ix_{i+1}x_i+qx_{i+1}x_i^2.
\end{gather*}
\end{itemize}

We can view $Q^l$ as a limit (in both directions) of the quiver $\overrightarrow{\mathbb{A}}_n$ of type $\mathbb{A}_n$ with linear orientation and obtain the Ringel--Hall algebra $\mathcal{H}(\rep^b(Q^l))$ as a limit of the Ringel--Hall algebra $\mathcal{H}(\rep^b(\overrightarrow{\mathbb{A}}_n))$, see \cite{HouYe2006,HouYe2008}. In particular, we have

\begin{lemma}
\label{lem:RH-algebra-of-quiver-with-linear-orientation}
The assignment $x_i\mapsto [S_i]$ ($i\in\mathbb{Z}$) extends to an isomorphism $\ch^{l}\to\ch(\rep^b(Q^l))$ of $\mathbb{Q}$-algebras.
\end{lemma}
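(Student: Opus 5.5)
We reduce to the finite case via a direct limit, using the classical Ringel--Green theorem for linearly oriented $\mathbb{A}_n$. For integers $a\le b$, let $Q^{[a,b]}$ be the full subquiver of $Q^l$ on the vertices $a,\dots,b$; it is a linearly oriented quiver of type $\mathbb{A}_{b-a+1}$. The category $\rep^b(Q^{[a,b]})$ is the full subcategory of $\rep^b(Q^l)$ of representations supported in $[a,b]$. Since subobjects and quotients of such representations are again supported in $[a,b]$, this subcategory is closed under subobjects, quotients and extensions. Hence Hall numbers computed in the subcategory agree with those computed in $\rep^b(Q^l)$, and extension by zero gives an injective algebra homomorphism $\iota_{a,b}\colon\ch(\rep^b(Q^{[a,b]}))\to\ch(\rep^b(Q^l))$. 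Every finite-dimensional representation is supported on some finite interval, so $\ch(\rep^b(Q^l))$ is the directed union of the images of the $\iota_{a,b}$.

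Let $\ch^{[a,b]}$ be the algebra generated by $x_a,\dots,x_b$ subject to the relations of $\ch^l$ involving only these indices. By Ringel's theorem (Ringel 1990, Green), the assignment $x_i\mapsto[S_i]$ gives an isomorphism $\ch^{[a,b]}\cong\ch(\rep^b(Q^{[a,b]}))$. This is the integral, untwisted Hall algebra specialised at $v^2=q$, i.e. the positive part of the quantum group of type $\mathbb{A}$. For linearly oriented $\mathbb{A}_n$ every representation is a direct sum of intervals, and the $\mathbb{Q}$-form is generated by the simples. The Serre relations themselves hold in $\ch(\rep^b(Q^l))$ by the two-vertex computation of the Example above, applied to $S_i,S_{i+1}$ via $\iota_{i,i+1}$. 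For $|i-j|>1$ we have $[S_i]\diamond[S_j]=[S_i\oplus S_j]=[S_j]\diamond[S_i]$, because $\Ext^1$ vanishes in both directions by Lemma~\ref{lem:Euler-form-linear}. Hence $\Phi\colon\ch^l\to\ch(\rep^b(Q^l))$, $x_i\mapsto[S_i]$, is a well-defined algebra homomorphism.

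For surjectivity, every basis element $[M]$ lies in the image of some $\iota_{a,b}$, and that image is generated by $[S_a],\dots,[S_b]$ by Ringel's theorem.

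For injectivity, note that $\ch^l$ is the direct limit of the $\ch^{[a,b]}$, since it is a presentation whose relations each involve finitely many generators. The only delicate point is that the canonical maps $\ch^{[a,b]}\to\ch^l$ need not a priori be injective. This is harmless: if $u\in\ch^l$ satisfies $\Phi(u)=0$, choose $[a,b]$ and $u'\in\ch^{[a,b]}$ mapping to $u$. Then $\iota_{a,b}$ applied to the Ringel image of $u'$ equals $\Phi(u)=0$. Since $\iota_{a,b}$ and the Ringel isomorphism are both injective, $u'=0$, and therefore $u=0$. The main step to be careful about is thus just the extension-closedness of the support subcategories, which guarantees that the Hall multiplication is compatible with the inclusions $\iota_{a,b}$; everything else follows from the finite type $\mathbb{A}$ case.
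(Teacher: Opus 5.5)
Your proof is correct and takes essentially the same approach as the paper, which also obtains $\ch(\rep^b(Q^l))$ as a limit of the Hall algebras $\ch(\rep^b(\overrightarrow{\mathbb{A}}_n))$, citing Hou--Ye together with the finite type $\mathbb{A}$ result. You additionally spell out the details the paper leaves to those references: the support subcategories are extension-closed, so the maps $\iota_{a,b}$ are algebra embeddings, and injectivity of the assembled map follows by pulling any element of the kernel back to a finite interval.
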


Secondly, let $\ch_{p_1,p_2}$ be the $\mathbb{Q}$-algebra generated by $\{x_i\}_{i\in\mathbb{Z}}$, subject to relations
\begin{itemize}
\item for $i,j\in\mathbb{Z}$ with $|i-j|>1$: $x_ix_j-x_jx_i$,
\item for $i\in\mathbb{Z}$ with $\overline{i}\in[-p_1,0)$:
\begin{gather*}
x_ix_{i+1}^2-(1+q^{-1})x_{i+1}x_ix_{i+1}+q^{-1}x_{i+1}^2x_i,\\
x_i^2x_{i+1}-(1+q^{-1})x_ix_{i+1}x_i+q^{-1}x_{i+1}x_i^2,
\end{gather*}
\item for $i\in\mathbb{Z}$ with $\overline{i}\in [0,p_2)$:
\begin{gather*}
x_ix_{i+1}^2-(1+q)x_{i+1}x_ix_{i+1}+qx_{i+1}^2x_i,\\
x_i^2x_{i+1}-(1+q)x_ix_{i+1}x_i+qx_{i+1}x_i^2.
\end{gather*}
\end{itemize}

We can view $Q^{p_1,p_2}$ as a limit of some quivers of type $\mathbb{A}_n$, and as a consequence we obtain, as for $\ch(\rep^b(Q^l))$ above, the following result for $\mathcal{H}(\rep^b(Q_{p_1,p_2}))$.

\begin{lemma}
\label{lem:RH-algebra-of-quiver-with-zigzag-orientation}
The assignment $x_i\mapsto [S_i]$ ($i\in\mathbb{Z}$) extends to an isomorphism $\ch_{p,q}\to\ch(\rep^b(Q_{p_1,p_2}))$ of $\mathbb{Q}$-algebras.
\end{lemma}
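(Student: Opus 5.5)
We reduce the statement to the finite case and then pass to a direct limit, in the same way as was done for $Q^l$ in Lemma~\ref{lem:RH-algebra-of-quiver-with-linear-orientation}. For integers $a<b$ let $Q[a,b]$ be the full subquiver of $Q_{p_1,p_2}$ on the vertices $a,a+1,\ldots,b$. It is a quiver of type $\mathbb{A}_{b-a+1}$. Extension by zero identifies $\rep^b(Q[a,b])$ with a full subcategory of $\rep^b(Q_{p_1,p_2})$ that is closed under extensions and subobjects, namely the representations supported on $[a,b]$. Since a subrepresentation or extension of representations supported on $[a,b]$ is again supported there, all Hall numbers $g^L_{MN}$ with $M,N$ supported on $[a,b]$ can be computed inside $Q[a,b]$. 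Hence $\ch(\rep^b(Q[a,b]))$ is a subalgebra of $\ch(\rep^b(Q_{p_1,p_2}))$. Every finite-dimensional representation is supported on some finite interval, so $\ch(\rep^b(Q_{p_1,p_2}))$ is the directed union of these subalgebras.

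For the finite pieces we invoke Ringel's theorem for type $\mathbb{A}_n$. Since every finite-dimensional representation of a Dynkin quiver is a direct sum of indecomposables, Ringel's results give the following. The Ringel--Hall algebra $\ch(\rep^b(Q[a,b]))$ is generated by the $[S_i]$, $a\le i\le b$. It is presented by the commutation relations $x_ix_j=x_jx_i$ for $|i-j|>1$ together with, for each arrow $i\to j$ (with $|i-j|=1$), the relations
\[x_i^2x_j-(1+q)x_ix_jx_i+qx_jx_i^2,\qquad x_ix_j^2-(1+q)x_jx_ix_j+qx_j^2x_i.\]
These are exactly the relations of the preceding example, up to renaming. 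In the ordering convention of $\ch_{p_1,p_2}$, where each relation is written with $x_i$ before $x_{i+1}$, an arrow $i+1\to i$ requires rewriting these relations with the indices swapped. Multiplying by $q^{-1}$ and reading them in the opposite order yields precisely the relations with $q^{-1}$. By the description of arrows in Lemma~\ref{lem:Euler-form-zigzag}, the arrow points $i\to i+1$ exactly when $\overline{i}\in[0,p_2)$ and $i+1\to i$ exactly when $\overline{i}\in[-p_1,0)$. So the finite presentation matches the restriction of the defining relations of $\ch_{p_1,p_2}$ to indices in $[a,b]$.

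Next, let $\ch_{[a,b]}$ be the algebra given by generators $x_a,\ldots,x_b$ and these restricted relations. The natural maps $\ch_{[a,b]}\to\ch_{[a',b']}$ for $[a,b]\subseteq[a',b']$ form a directed system. Because every relation of $\ch_{p_1,p_2}$ involves finitely many generators, $\ch_{p_1,p_2}$ is the colimit of this system.

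The point needing care is whether the transition maps are injective, so that the colimit coincides with the union. This follows by composing with the isomorphisms $\ch_{[a,b]}\cong\ch(\rep^b(Q[a,b]))$: under these isomorphisms the transition maps become the inclusions of Hall subalgebras described in the first paragraph, which are injective. The assignment $x_i\mapsto[S_i]$ respects all relations, since each relation holds already in some $\ch(\rep^b(Q[a,b]))$, so it defines a homomorphism $\ch_{p_1,p_2}\to\ch(\rep^b(Q_{p_1,p_2}))$. The resulting map is the colimit of the finite isomorphisms, hence is an isomorphism.

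The main obstacle is the finite-type input, namely that Ringel's presentation holds over $\mathbb{Q}$ at a specialized prime power $q$, and not only generically. I would justify this through Ringel's PBW-type basis of ordered products of $[V_{c,d}]$ in type $\mathbb{A}$, which is valid for each finite field. Alternatively, as the paper does for $Q^l$, one can cite the limit results of Hou--Ye \cite{HouYe2006,HouYe2008}, adapted to arbitrary orientation.
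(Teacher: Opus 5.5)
Your proposal is correct and follows essentially the same route as the paper, which views $Q_{p_1,p_2}$ as a directed union of finite type-$\mathbb{A}_n$ subquivers and invokes the finite-type presentation (Ringel, via the limit results of Hou--Ye), exactly as it does for $Q^l$. Your orientation bookkeeping ($i\to i+1$ iff $\overline{i}\in[0,p_2)$, and the $q^{-1}$-relations obtained by rescaling the relations of an arrow $i+1\to i$ by $q^{-1}$) and your colimit argument are right; the one point to phrase carefully is the finite-type injectivity. It requires that the algebra presented by the Serre relations at the specialised $q$ is spanned by PBW monomials, which holds because $\sqrt{q}$ is not a root of unity and is what Ringel's theorem uses. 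A PBW basis on the Hall-algebra side alone gives no upper bound on the size of the presented algebra.
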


Finally, let $\ch^{l,+}$ be the $\mathbb{Q}$-algebra generated by $\{x_i\}_{i\in\mathbb{Z}}\cup \{z_i\}_{i\in\mathbb{Z}}$ such that $\{x_i\}_{i\in\mathbb{Z}}$ generate $\ch^{l}$ as a subalgebra of $\ch^{l,+}$ and subject to the following additional relations:
\begin{itemize}
	\item for $i,j\in\mathbb{Z}$:
	\begin{gather*}
		 x_jz_i-z_ix_j ~\hbox{if $|i-j|>1$},\\
		 z_iz_j-q^{\mathrm{sgn}(j-i)}z_jz_i, 
	\end{gather*}
	\item for $i\in\mathbb{Z}$:
	\begin{gather*}
		x_iz_i-qz_ix_i,\\
		x_{i-1}z_{i}-z_{i}x_{i-1}-z_{i-1}.
	\end{gather*}
\end{itemize}

\begin{lemma}
	\label{lem:RH-algebra-of-quiver-with-linear-orientation-$+$}
	The assignment $x_i\mapsto [S_i]$ ($i\in\mathbb{Z}$) and $z_i\mapsto [P_i]$ ($i\in\mathbb{Z}$) extends to an isomorphism $\ch^{l,+}\to\ch(\rep^+(Q^l))$ of $\mathbb{Q}$-algebras.
\end{lemma}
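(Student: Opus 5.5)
We have to prove Lemma~\ref{lem:RH-algebra-of-quiver-with-linear-orientation-$+$}: the assignment $x_i\mapsto[S_i]$, $z_i\mapsto[P_i]$ extends to an isomorphism $\ch^{l,+}\to\ch(\rep^+(Q^l))$. The plan has three parts: the relations hold, the map is surjective, and it is injective, the last via a PBW-type basis.

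\emph{The relations hold.} The relations among the $x_i$ hold because $\ch(\rep^b(Q^l))$ is a subalgebra of $\ch(\rep^+(Q^l))$: $\rep^b(Q^l)$ is closed under extensions in $\rep^+(Q^l)$, so Hall products of finite-dimensional objects stay finite-dimensional. Lemma~\ref{lem:RH-algebra-of-quiver-with-linear-orientation} then gives these relations. For the remaining relations we compute Hall products using Lemma~\ref{lem:Euler-form-linear} and the Riedtmann--Peng formula.
\begin{itemize}
\item $\Ext^1(P_i,P_j)=0$, so each product $[P_i]\diamond[P_j]$ is a multiple of $[P_i\oplus P_j]$. One computes $[P_i]\diamond[P_j]=q^{\dim\Hom(P_i,P_j)}[P_i\oplus P_j]/(\ldots)$, or more simply counts the submodules $U\cong P_j$ of $P_i\oplus P_j$ with quotient $\cong P_i$. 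Comparing the two orders gives the ratio $q^{\mathrm{sgn}(j-i)}$. For $i=j$ the relation is trivial.
\item For $|i-j|>1$, both $\Ext^1$ and $\Hom$ between $S_j$ and $P_i$ vanish in both directions. Hence both products equal $[S_j\oplus P_i]$.
\item For $j=i$: $\Ext^1(S_i,P_i)=0=\Ext^1(P_i,S_i)$ and $\Hom(S_i,P_i)=0$, while $\Hom(P_i,S_i)=\fk$. The Hall numbers then give $[S_i]\diamond[P_i]=q[P_i]\diamond[S_i]$. Here one checks directly which submodules isomorphic to $S_i$ or $P_i$ inside $S_i\oplus P_i$ have the right quotient.
\item For $j=i-1$: $\Ext^1(S_{i-1},P_i)=\fk$, whose nonsplit extension is $P_{i-1}$, and $\Ext^1(P_i,S_{i-1})=0$. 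So $[S_{i-1}]\diamond[P_i]=[S_{i-1}\oplus P_i]+c[P_{i-1}]$ and $[P_i]\diamond[S_{i-1}]=[S_{i-1}\oplus P_i]$. Riedtmann--Peng gives $c=1$.
\item For $j=i+1$: $\Hom(P_i,S_{i+1})=\fk$ and there are no extensions. This should give commutation; I expect this case is covered by the listed relations only implicitly, or is itself a consequence, and I would check it.
\end{itemize}

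\emph{Surjectivity.} Every object of $\rep^+(Q^l)$ decomposes as $M\oplus\bigoplus_k P_{i_k}$ with $M$ finite-dimensional. Choose an ordering in which $\Ext^1$ vanishes from left to right; then $[M]\diamond[P_{i_1}]\diamond\cdots$ equals a nonzero scalar times $[M\oplus\bigoplus P_{i_k}]$ plus nothing else. Hence every basis element lies in the image, using surjectivity onto $\ch(\rep^b(Q^l))$ from Lemma~\ref{lem:RH-algebra-of-quiver-with-linear-orientation}.

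\emph{Injectivity.} Using the relations, any monomial can be rewritten so that all $z$'s are moved to the right of all $x$'s. The relations $x_jz_i=\lambda z_ix_j+\ldots$ allow this, with correction terms that are again of $z$-type. The $z$'s are then sorted by increasing index. So $\ch^{l,+}$ is spanned by elements $u\cdot z_{i_1}\cdots z_{i_m}$ with $u\in\ch^l$ and $i_1\le\cdots\le i_m$. Their images $[M]\diamond[P_{i_1}]\diamond\cdots$ are, up to nonzero scalars, the distinct basis elements $[M\oplus\bigoplus P_{i_k}]$, so they are linearly independent. Injectivity follows, using that $\ch^l\to\ch(\rep^b)$ is an isomorphism.

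The main obstacle is the rewriting step. One must confirm that moving $z$ past $x$ really terminates using only the listed relations, in particular the $j=i+1$ case. This is where I would first test whether $x_{i+1}z_i$ can be moved.
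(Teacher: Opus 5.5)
Your overall plan matches the paper's: get a homomorphism from the relations, straighten $\ch^{l,+}$ into (ordered $z$-monomials)$\,\cdot\,\ch^l$, and compare with $\ch(\rep^+(Q^l))=\ch(\proj(Q^l))\otimes\ch(\rep^b(Q^l))$. However, the basis step uses the wrong order of factors, and as written it fails.

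\textbf{The order of factors.} $[M]\diamond[P]$ counts extensions $0\to P\to L\to M\to 0$, and $\Ext^1(M,P)$ need not vanish. Your own computation $[S_{i-1}]\diamond[P_i]=[S_{i-1}\oplus P_i]+[P_{i-1}]$ already contradicts the claim that $[M]\diamond[P_{i_1}]\diamond\cdots$ is a scalar multiple of $[M\oplus\bigoplus_k P_{i_k}]$. This breaks both your surjectivity and your independence paragraphs as written.

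The clean product is $[P]\diamond[M]=[P\oplus M]$. The extension splits because $P$ is projective. Moreover, $M$ is the only subobject of $P\oplus M$ isomorphic to $M$, because $P$ has no nonzero finite-dimensional subrepresentations. So put the $z$'s on the left. This is also the direction in which the relations $x_jz_i=q^{\delta_{ij}}z_ix_j+\delta_{j,i-1}z_{i-1}$ naturally rewrite, and it is what the paper does. Keeping your order would need an extra triangularity argument, e.g.\ on the dimension of the finite-dimensional part of $L$.

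\textbf{The case $j=i+1$.} You flag this as the main obstacle, but it is not one, and the missing idea is how to derive it. First, your Hom computation is wrong: $\Hom(P_i,S_{i+1})=(S_{i+1})_i=0$. So all $\Hom$ and $\Ext^1$ between $S_{i+1}$ and $P_i$ vanish in both directions, and $[S_{i+1}]\diamond[P_i]=[P_i]\diamond[S_{i+1}]=[P_i\oplus S_{i+1}]$.

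More importantly, $x_{i+1}z_i=z_ix_{i+1}$ follows from the listed relations. Multiply the Serre relation $x_ix_{i+1}^2-(1+q)x_{i+1}x_ix_{i+1}+qx_{i+1}^2x_i=0$ on the right by $z_{i+2}$. Then straighten using only
$x_{i+1}z_{i+2}=z_{i+2}x_{i+1}+z_{i+1}$, $x_iz_{i+2}=z_{i+2}x_i$, $x_{i+1}z_{i+1}=qz_{i+1}x_{i+1}$ and $x_iz_{i+1}=z_{i+1}x_i+z_i$. This gives
\begin{align*}
x_ix_{i+1}^2z_{i+2}&=z_{i+2}x_ix_{i+1}^2+(1+q)z_{i+1}x_ix_{i+1}+(1+q)z_ix_{i+1},\\
x_{i+1}x_ix_{i+1}z_{i+2}&=z_{i+2}x_{i+1}x_ix_{i+1}+z_{i+1}x_ix_{i+1}+qz_{i+1}x_{i+1}x_i+x_{i+1}z_i,\\
x_{i+1}^2x_iz_{i+2}&=z_{i+2}x_{i+1}^2x_i+(1+q)z_{i+1}x_{i+1}x_i.
\end{align*}
In the Serre combination, the $z_{i+2}$-terms give $z_{i+2}$ times the Serre relation, which is $0$. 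The $z_{i+1}$-terms cancel. What remains is $(1+q)(z_ix_{i+1}-x_{i+1}z_i)=0$, and $1+q$ is invertible in $\mathbb{Q}$.

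With this relation, every $x_jz_i$ rewrites as $q^{\delta_{ij}}z_ix_j+\delta_{j,i-1}z_{i-1}$. Each rewrite either removes an $x$-before-$z$ inversion or shortens the word, so straightening terminates. This yields the decomposition $\ch^{l,+}=\ch^{l,0}\otimes_{\mathbb{Q}}\ch^l$, which the paper uses without spelling out this step.
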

\begin{proof}
First, it is straightforward to verify that the above relations are satisfied in $\ch(\rep^+(Q^l))$ with $x_i$ replaced by $[S_i]$ and $z_i$ replaced by $[P_i]$. Therefore the assignment $x_i\mapsto [S_i]$ ($i\in\mathbb{Z}$) and $z_i\mapsto [P_i]$ ($i\in\mathbb{Z}$) extends to a homomorphism $\varphi\colon\ch^{l,+}\to\ch(\rep^+(Q^l))$ of $\mathbb{Q}$-algebras. Secondly, let $\mathcal{H}^{l,0}$ be the subalgebra of $\mathcal{H}^{l,+}$ generated by $z_i$ ($i\in\mathbb{Z}$). Then $\ch^{l,0}$ has a basis $z_{i_1}^{a_{i_1}}\cdots z_{i_k}^{a_{i_k}}$, where $i_1,\ldots,i_k\in\mathbb{Z}$ with $i_1<\ldots<i_k$, and $a_{i_1},\ldots,a_{i_k}\in\mathbb{N}$, and $\ch^{l,+}=\ch^{l,0}\ten_{\mathbb{Q}}\ch^l$.  Thirdly, any representation $\rep^+(Q^l)$ is, up to isomorphism, of the form $P\oplus M$ with $P\in \proj(Q^l)$ and $M\in\rep^b(Q^l)$, and we have $[P]\diamond[M]=[P\oplus M]$, since $P$ is projective and $\Hom_{Q^l}(P,M)=0$. This shows that $\ch(\rep^+(Q^l))=\ch(\proj(Q^l))\ten_{\mathbb{Q}}\ch(\rep^b(Q^l))$, where $\ch(\proj(Q^l))$ is the subalgebra of $\ch(\rep^+(Q^l))$ with basis $\cp(\proj Q^l)$. Fourthly, by Lemma~\ref{lem:RH-algebra-of-quiver-with-linear-orientation}, $\varphi$ restricts to an isomorphism $\ch^l\to\ch(\rep^b(Q^l))$. Therefore it remains to show that $\varphi$ restricts to an isomorphism $\ch^{l,0}\to\ch(\proj(Q^l))$ of $\mathbb{Q}$-algebras. Observe that $\varphi$ sends $z_{i_1}^{a_{i_1}}\cdots z_{i_k}^{a_{i_k}}$ to $[P_{i_1}]^{\diamond a_{i_1}}\diamond\cdots\diamond [P_{i_k}]^{\diamond a_{i_k}}$, which is a scalar multiple of $[P_{i_1}^{\oplus a_{i_1}}\oplus\cdots \oplus P_{i_k}^{\oplus a_{i_k}}]$. The proof is then finished, because $[P_{i_1}^{\oplus a_{i_1}}\oplus\cdots \oplus P_{i_k}^{\oplus a_{i_k}}]$, $i_1,\ldots,i_k\in\mathbb{Z}$ with $i_1<\ldots<i_k$ and $a_{i_1},\ldots,a_{i_k}\in\mathbb{N}$, form a basis of $\ch(\proj(Q^l))$. 
\end{proof}

\subsection{Orbital Euler form}\label{ss:orbit-Euler-form}
Let $\mathcal{A}$ be a Hom-finite and Ext-finite skeletally small hereditary abelian $\fk$-category. The \emph{Euler form} of $\ca$ is defined to be:
\[\langle M,N\rangle:={\rm dim}_{\fk}{\rm Hom}_{\mathcal{A}}(M,N)-{\rm dim}_{\fk}{\rm Ext}^1_{\mathcal{A}}(M,N),\]
where $M,N\in \mathcal{A}$.

Assume that $\sigma$ is a $\fk$-linear automorphism of $\mathcal{A}$ of infinite order satisfying the condition
\begin{itemize}
\item[(HF)]
for any $M,N\in\ca$ the spaces $\Hom_\ca(M,\sigma^{-p}N)$ and $\Ext^1_\ca(M,\sigma^{-p}N)$ vanish for all but finitely many positive integers $p$.
\end{itemize}

\begin{definition}
For an integer $d\geq 1$, the \emph{$d$-twisted left $\sigma$-orbital Euler form} is defined to be:
\[\langle M,N\rangle_o:=\sum_{p\in \mathbb{Z}^+}\langle M,\sigma^{-p}N\rangle\cdot(-1)^{dp},\]
where $M,N\in\ca$. The corresponding anti-symmetric form is:
\[ (M,N)_o:=\langle M,N\rangle_o-\langle N,M\rangle_o.\]
\end{definition}

\begin{lemma}
\label{lem:rotate-orbital-Euler-form}
For $M,N\in\ca$ and $1\leq t\leq d-1$, we have
\begin{align*}
\langle \sigma^{-1}M,N\rangle_o=\langle M,\sigma N\rangle_o&=(-1)^d \langle M,N\rangle+(-1)^d\langle M,N\rangle_o,\\
(-1)^{d-t}\langle \sigma^{-1}N,M\rangle_o-(-1)^{t}\langle M,N\rangle_o&=(-1)^{t-1}((M,N)_o-\langle N,M\rangle).
\end{align*}
\end{lemma}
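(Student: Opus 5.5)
The plan is a direct computation from the definition of the $d$-twisted orbital Euler form. The only input needed is that $\sigma$ is a $\fk$-linear automorphism of $\ca$. Hence it induces isomorphisms $\Hom_\ca(X,Y)\cong\Hom_\ca(\sigma X,\sigma Y)$ and $\Ext^1_\ca(X,Y)\cong\Ext^1_\ca(\sigma X,\sigma Y)$, and so $\langle \sigma X,\sigma Y\rangle=\langle X,Y\rangle$ for all $X,Y\in\ca$. By condition (HF), every sum below has only finitely many nonzero terms, so reindexing and splitting off single terms is harmless.

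For the first identity, I will apply $\sigma$ to both arguments of each term:
\[\langle \sigma^{-1}M,N\rangle_o=\sum_{p\geq 1}\langle \sigma^{-1}M,\sigma^{-p}N\rangle(-1)^{dp}=\sum_{p\geq 1}\langle M,\sigma^{-(p-1)}N\rangle(-1)^{dp}.\]
The middle expression also equals $\langle M,\sigma N\rangle_o$ directly from the definition, since $\sigma^{-p}\sigma N=\sigma^{-(p-1)}N$. Substituting $p'=p-1\geq 0$ gives
\[(-1)^d\sum_{p'\geq 0}\langle M,\sigma^{-p'}N\rangle(-1)^{dp'}.\]
Splitting off the $p'=0$ term yields $(-1)^d\langle M,N\rangle+(-1)^d\langle M,N\rangle_o$, which is the claimed formula.

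For the second identity, I will apply the first identity with the roles of $M$ and $N$ exchanged: $\langle\sigma^{-1}N,M\rangle_o=(-1)^d(\langle N,M\rangle+\langle N,M\rangle_o)$. Since $(-1)^{d-t}(-1)^d=(-1)^{2d-t}=(-1)^t$, the left-hand side becomes
\[(-1)^t\bigl(\langle N,M\rangle+\langle N,M\rangle_o-\langle M,N\rangle_o\bigr).\]
By the definition of $(M,N)_o$, this equals $(-1)^t\bigl(\langle N,M\rangle-(M,N)_o\bigr)=(-1)^{t-1}\bigl((M,N)_o-\langle N,M\rangle\bigr)$, as required. The hypothesis $1\leq t\leq d-1$ plays no role in the computation beyond $t$ being an integer.

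There is no real obstacle. The one point to handle carefully is the bookkeeping of the index shift, namely that the $p'=0$ term which appears is exactly $\langle M,N\rangle$, together with the sign $(-1)^d$ it produces.
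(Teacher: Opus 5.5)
Your proof is correct and follows essentially the same route as the paper. You use invariance of $\langle-,-\rangle$ under $\sigma$, shift the index to split off the $p=0$ term $\langle M,N\rangle$ with its sign $(-1)^d$, and obtain the second identity by applying the first with $M$ and $N$ swapped; the only cosmetic difference is that the paper factors out $(-1)^{t-1}$ before substituting, while you substitute first.
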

\begin{proof}
The first equality holds because $\sigma$ is an automorphism. The second equality is straightforward:
\begin{align*}
\langle M,\sigma N\rangle_o&=\sum_{p\in \mathbb{Z}^+}\langle M,\sigma^{-p+1}N\rangle\cdot(-1)^{dp}=\sum_{p\in \mathbb{Z}^+\cup\{0\}}\langle M,\sigma^{-p}N\rangle\cdot(-1)^{d(p+1)}\\
&=(-1)^d\langle M,N\rangle+\sum_{p\in \mathbb{Z}^+}\langle M,\sigma^{-p}N\rangle\cdot(-1)^{d(p+1)}\\
&=(-1)^d \langle M,N\rangle+(-1)^d\langle M,N\rangle_o.
\end{align*}
For the third equality:
\begin{align*}
(-1)^{d-t}\langle \sigma^{-1}N,M\rangle_o-(-1)^{t}\langle M,N\rangle_o&=(-1)^{t-1}(\langle M,N\rangle_o-(-1)^{d}\langle \sigma^{-1}N,M\rangle_o)\\
&=(-1)^{t-1}(\langle M,N\rangle_o-\langle N,M\rangle_o-\langle N,M\rangle)\\
&=(-1)^{t-1}((M,N)_o-\langle N,M\rangle).\qedhere
\end{align*}
\end{proof}

\subsubsection{The quiver $Q_{p_1,p_2}$}
\label{ss:orbit-Euler-form-zigzag}

We consider the quiver $Q_{p_1,p_2}$ ($p_1,p_2\geq 1$) with quiver-automorphism $\sigma$ introduced in the beginning of this section. Recall that $\sigma$ takes a vertex $i\in\mathbb{Z}$ to $i-n$, where $n=p_1+p_2$. The corresponding push-out functor on $\rep^b(Q_{p_1,p_2})$, also denoted by $\sigma$, satisfies the condition (HF). Next we list the values of the orbital Euler form of $\rep^b(Q_{p_1,p_2})$ for simple representations.

\begin{lemma}
\label{lem:orbital-Euler-form-zigzag}
Let $d$ and $r$ be integers with $d\geq 1$ and $r\neq 0$.
The values of the $d$-twisted left $\sigma^r$-orbital Euler form for simple representations of $Q_{p_1,p_2}$ is
\begin{itemize}
\item[(1)] If $|nr|=2$ (that is, $p_1=p_2=1$ and $|r|=1$), then
    \[
    \langle S_i,S_j\rangle_o=\begin{cases}
    (-1)^{dk} & \text{if }j=i-2kr~(k>0)\\
    (-1)^{d+1}-1 & \text{if } \overline{i}=0,~j=i-(2k+1)r~(k>0)\\
    (-1)^{d+1}& \text{if } \overline{i}=0,~j=i-r\\
    0 & \text{otherwise}.
    \end{cases}
    \]
\item[(2)] If $|nr|>2$, then
    \[
    \langle S_i,S_j\rangle_o=\begin{cases}
    (-1)^{dk} & \text{if }j=i-nkr~(k>0)\\
    (-1)^{dk+1} & \text{if } \overline{i}\in(-p_1,0],~j=i-(nkr+1)~(k>0)\\
    (-1)^{dk+1} & \text{if } \overline{i}\in [0,p_2),~j=i-(nkr-1)~(k>0)\\
    0 & \text{otherwise}.
    \end{cases}
    \]
\end{itemize}
\end{lemma}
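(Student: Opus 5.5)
The plan is to compute the form directly from its definition. I reduce each term of the orbital sum to an ordinary Euler form between two simples, which Lemma~\ref{lem:Euler-form-zigzag} gives, and then count which terms can contribute to a given pair $(i,j)$.

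\textbf{Step 1: identify the twisted simples.} The push-out functor of the quiver automorphism sends $S_j$ to $S_{\sigma(j)}=S_{j-n}$. Hence $\sigma^{-pr}S_j=S_{j+pnr}$, and by definition
\[
\langle S_i,S_j\rangle_o=\sum_{p\geq 1}(-1)^{dp}\langle S_i,S_{j+pnr}\rangle .
\]
(HF) holds because each fixed $i$ meets only finitely many nonzero terms.

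\textbf{Step 2: list the nonzero terms.} By Lemma~\ref{lem:Euler-form-zigzag} (in the reformulation following it), $\langle S_i,S_{m}\rangle$ is nonzero only in three cases:
\begin{itemize}
\item[(a)] $m=i$, with value $1$;
\item[(b)] $m=i-1$, with value $-1$, provided $\overline{i}\in(-p_1,0]$;
\item[(c)] $m=i+1$, with value $-1$, provided $\overline{i}\in[0,p_2)$.
\end{itemize}
These conditions depend only on $i$, not on $p$. So the $p$-th summand is nonzero exactly in the following cases:
\begin{itemize}
\item[(a)] $j=i-pnr$, contributing $(-1)^{dp}$;
\item[(b)] $j=i-(pnr+1)$, contributing $-(-1)^{dp}$, when $\overline{i}\in(-p_1,0]$;
\item[(c)] $j=i-(pnr-1)$, contributing $-(-1)^{dp}$, when $\overline{i}\in[0,p_2)$.
\end{itemize}

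\textbf{Step 3: the case $|nr|>2$.} The three families $\{i-pnr\}$, $\{i-pnr-1\}$ and $\{i-pnr+1\}$ ($p\geq1$) lie in the residue classes $i$, $i-1$, $i+1$ modulo $|nr|$. These classes are pairwise distinct since $|nr|\geq3$. Within each family, distinct $p$ give distinct $j$. So each $j$ receives at most one contribution, and renaming $p=k$ gives exactly formula (2). When $\overline{i}=0$, both (b) and (c) occur, but they land on different $j$.

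\textbf{Step 4: the case $|nr|=2$.} Here $n=2$, so $p_1=p_2=1$, $r=\pm1$ and $\overline{i}\in\{-1,0\}$.
\begin{itemize}
\item If $\overline{i}=-1$, neither $(-1,0]$ nor $[0,1)$ contains $\overline{i}$. Only family (a) survives, giving $(-1)^{dk}$ at $j=i-2kr$.
\item If $\overline{i}=0$, both (b) and (c) occur, and now they overlap. For instance with $r=1$, $i-2p-1=i-2(p+1)+1$. Thus $j=i-(2k+1)r$ with $k>0$ receives the contribution from $p=k$ in one family and from $p=k+1$ in the other, for a total of $-(-1)^{dk}-(-1)^{d(k+1)}=-(-1)^{dk}(1+(-1)^d)$. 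This equals $0=(-1)^{d+1}-1$ when $d$ is odd, and $-2=(-1)^{d+1}-1$ when $d$ is even. The remaining value $j=i-r$ receives only the $p=1$ term of one family, giving $(-1)^{d+1}$.
\end{itemize}
The sign of $r$ only determines which family feeds which $j$; it does not change the totals.

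\textbf{Main obstacle.} The only delicate point is the bookkeeping of coincidences between the three families. In particular one must check that for $|nr|>2$ there are no collisions. One must also get the sign of the direction of $\sigma^{-1}$ right, namely $i\mapsto i+n$ on vertices. I would verify this convention first against the first equality in Lemma~\ref{lem:rotate-orbital-Euler-form}.
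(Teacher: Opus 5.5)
Your proposal is correct and follows the same route as the paper. Both expand the orbital sum using $\sigma^{-pr}S_j=S_{j+pnr}$, reduce each term to the ordinary Euler form of Lemma~\ref{lem:Euler-form-zigzag}, and track which $p$ contribute to a given $j$; the paper writes this out only for $nr=2$ and calls the other cases similar, whereas your residue-class bookkeeping of the three families handles every case uniformly, including the overlap of families (b) and (c) when $|nr|=2$ and $\overline{i}=0$.
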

\begin{proof}
We give a proof when $nr=2$ (\ie $p_1=p_2=1$ and $r=1$) using Lemma~\ref{lem:Euler-form-zigzag}, and the proof of the other cases is similar.

When $j=i-2k$ ($k\in\mathbb{Z}$),
\begin{align*}
\langle S_i,S_j\rangle_o &=\sum_{p\in \mathbb{Z}^+}\langle S_i,\sigma^{-rp}S_{i-2k}\rangle\cdot(-1)^{dp}=\sum_{p\in \mathbb{Z}^+}\langle S_i,\sigma^{-p}S_{i-2k}\rangle\cdot(-1)^{dp}\\
&=\sum_{p\in \mathbb{Z}^+}\langle S_i,S_{i-2k+2p}\rangle\cdot(-1)^{dp}=
\begin{cases}
    \langle S_i,S_{i}\rangle\cdot(-1)^{dk} & \text{if }k>0\\
    0 & \text{otherwise}.
    \end{cases}\\
&=
\begin{cases}
    (-1)^{dk} & \text{if }j=i-2k ~(k>0)\\
    0 & \text{otherwise}.
    \end{cases}
\end{align*}
When $j=i-(2k+1)$ ($k \in \mathbb{Z}$), we need to discuss $\overline{i}=-1$ or $\overline{i}=0$. Without loss of generality, we may assume $i=-1$ or $i=0$.

Case $i=-1$: We have
  \begin{align*}
  \langle S_i,S_j\rangle_o &=\sum_{p\in \mathbb{Z}^+}\langle S_{-1},\sigma^{-rp}S_{-2-2k}\rangle\cdot(-1)^{dp}=\sum_{p\in \mathbb{Z}^+}\langle S_{-1},\sigma^{-p}S_{-2-2k}\rangle\cdot(-1)^{dp}\\
  &=\sum_{p\in \mathbb{Z}^+}\langle S_{-1},S_{-2-2k+2p}\rangle\cdot(-1)^{dp}=0.
  \end{align*}

Case $i=0$: We have
 \begin{align*}
  \langle S_i,S_j\rangle_o &=\sum_{p\in \mathbb{Z}^+}\langle S_0,\sigma^{-rp}S_{-1-2k}\rangle\cdot(-1)^{dp}=\sum_{p\in \mathbb{Z}^+}\langle S_0,\sigma^{-p}S_{-1-2k}\rangle\cdot(-1)^{dp}\\
&=\sum_{p\in \mathbb{Z}^+}\langle S_0,S_{-1-2k+2p}\rangle\cdot(-1)^{dp}\\
  &=\begin{cases}
    \langle S_0,S_{-1}\rangle\cdot(-1)^{dk}+\langle S_0,S_{1}\rangle\cdot(-1)^{d(k+1)} & \text{if }k>0\\
    \langle S_0,S_{1}\rangle\cdot(-1)^{d}& \text{if }k=0\\
    0 & \text{if }k<0
    \end{cases}\\
      &=
  \begin{cases}
    (-1)^{dk}[(-1)^{d+1}-1] & \text{if }j=i-(2k+1)~(k>0)\\
    (-1)^{d+1}& \text{if }j=i-1\\
    0 & \text{otherwise}.
    \end{cases}\\
  &=
  \begin{cases}
    (-1)^{d+1}-1 & \text{if }j=i-(2k+1)~(k>0)\\
    (-1)^{d+1}& \text{if }j=i-1\\
    0 & \text{otherwise}.
    \end{cases}
\end{align*}
The last equality holds because $(-1)^d[(-1)^{d+1}-1]=(-1)^{d+1}-1$.
\end{proof}

The following is an immediate corollary of Lemma~\ref{lem:orbital-Euler-form-zigzag}.
\begin{corollary}
\label{cor:antisymmetric-orbital-Euler-form-zigzag}
Let $d$ and $r$ be integers with $d\geq 1$ and $r\neq 0$.
The values of $(-,-)_o$ for simple representations of $Q_{p_1,p_2}$ is
\begin{itemize}
\item[(1)] If $|nr|=2$ (that is, $p_1=p_2=1$ and $|r|=1$), then
    \[
    (S_i,S_j)_o=\begin{cases}
    (-1)^{dk} & \text{if }j=i-2kr~(k>0)\\
    (-1)^{d+1}-1 & \text{if } \overline{i}=0,~j=i-(2k+1)r~(k>0)\\
    (-1)^{d+1}& \text{if } \overline{i}=0,~j=i-r\\
    (-1)^{dk+1} & \text{if }j=i+2kr~(k>0)\\
    (-1)^d+1 & \text{if } \overline{i}=-1,~j=i+(2k+1)r~(k>0)\\
    (-1)^{d}& \text{if } \overline{i}=-1,~j=i+r\\
    0 & \text{otherwise}.
    \end{cases}
    \]
\item[(2)] If $|nr|>2$, then
    \[
    (S_i,S_j)_o=\begin{cases}
    (-1)^{dk} & \text{if }j=i-nkr~(k>0)\\
    (-1)^{dk+1} & \text{if } \overline{i}\in(-p_1,0],~j=i-(nkr+1)~(k>0)\\
    (-1)^{dk+1} & \text{if } \overline{i}\in [0,p_2),~j=i-(nkr-1)~(k>0)\\
    (-1)^{dk+1} & \text{if }j=i+nkr~(k>0)\\
    (-1)^{dk} & \text{if } \overline{i}\in[-p_1,0),~j=i+(nkr+1)~(k>0)\\
    (-1)^{dk} & \text{if } \overline{i}\in (1,p_2],~j=i+(nkr-1)~(k>0)\\
    0 & \text{otherwise} .
    \end{cases}
    \]
\end{itemize}
\end{corollary}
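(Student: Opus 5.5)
The plan is to read the corollary directly off Lemma~\ref{lem:orbital-Euler-form-zigzag}, using the definition
\[(S_i,S_j)_o=\langle S_i,S_j\rangle_o-\langle S_j,S_i\rangle_o.\]
The first term is exactly the value listed in Lemma~\ref{lem:orbital-Euler-form-zigzag}. For the second term I apply the same lemma with the roles of $i$ and $j$ swapped, and then rewrite each condition of the form ``$i=j-\ldots$'' as a condition on $j$ relative to $i$.

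\textbf{Disjoint supports.} I first check that the two terms never both contribute. Since $r$ is fixed, the nonzero values of $\langle S_i,S_j\rangle_o$ occur only when $j$ is reached from $i$ by moving a positive multiple of $r$ in the direction of $-r$, up to an offset of $\pm1$ (a total offset of $nkr$, $nkr\pm1$ or $2kr$, $(2k+1)r$ with $k>0$). Symmetrically, the nonzero values of $\langle S_j,S_i\rangle_o$ occur only when $j$ lies on the $+r$ side of $i$. For $|nr|>2$ the offsets $nkr\pm 1$ with $k\ge1$ have absolute value at least $2$, so they cannot cross zero. Hence the two supports are disjoint, and each case of the corollary has exactly one contributing term.

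\textbf{Case (2): $|nr|>2$.} The first three lines of the corollary are copied from the lemma. For the remaining lines, swap $i$ and $j$ in the lemma and negate.
\begin{itemize}
\item The condition $i=j-nkr$ becomes $j=i+nkr$, giving the value $-(-1)^{dk}=(-1)^{dk+1}$.
\item The condition $i=j-(nkr+1)$ with $\overline{j}\in(-p_1,0]$ becomes $j=i+nkr+1$. Since $n\mid nkr$, we have $\overline{j}=\overline{i+1}$, so $\overline{j}\in(-p_1,0]$ translates into $\overline{i}\in[-p_1,0)$. The value is $-(-1)^{dk+1}=(-1)^{dk}$.
\item The condition $i=j-(nkr-1)$ with $\overline{j}\in[0,p_2)$ becomes $j=i+nkr-1$. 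Now $\overline{j}=\overline{i-1}$, so $\overline{j}\in[0,p_2)$ translates into $\overline{i-1}\in[0,p_2)$. This shifted range must be matched carefully against the interval written in the statement, noting the convention $\overline{\cdot}\in[-p_1,p_2)$ and the wraparound at $p_2$.
\end{itemize}

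\textbf{Case (1): $|nr|=2$.} Here $n=2$, so the residues are $\overline{i}\in\{-1,0\}$. The same swap applies.
\begin{itemize}
\item The condition $\overline{j}=0$ with $i=j-(2k+1)r$ becomes $j=i+(2k+1)r$. Since $(2k+1)r$ is odd, $\overline{i}=-1$. The value is $-((-1)^{d+1}-1)=(-1)^d+1$.
\item The case $i=j-r$ gives $\overline{i}=-1$ and the value $(-1)^d$.
\item The case $i=j-2kr$ gives the value $(-1)^{dk+1}$.
\end{itemize}
As in case (2), the first-term and second-term supports are disjoint because of the sign of the offset.

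\textbf{Expected obstacle.} The only delicate point is the residue bookkeeping when shifting by $\pm1$ modulo $n$, including the boundary residues $-p_1$ and $p_2$. I would verify it on representatives $i\in[-p_1,p_2)$. There is also a subtlety when $r<0$: the directions reverse, but the sign relations between the $\pm1$ offsets and the residue intervals are unchanged. I would state the result for $r>0$ and note that the formulas are symmetric in this respect.
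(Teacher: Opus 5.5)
Your approach is the paper's own: the corollary is stated there as an immediate consequence of Lemma~\ref{lem:orbital-Euler-form-zigzag} via $(S_i,S_j)_o=\langle S_i,S_j\rangle_o-\langle S_j,S_i\rangle_o$. Your disjointness check (offsets $-mr$ versus $+mr$ in case (1), and $|nkr\pm1|\geq 2$ in case (2)) is what justifies reading the two terms off separately. Your bookkeeping for case (1) and for the first five rows of case (2) is correct.

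The one step you left open cannot be matched to the printed statement. The condition $\overline{i-1}\in[0,p_2)$ holds exactly when $\overline{i}\in\{1,\dots,p_2\}$, with $p_2$ read as $-p_1$. That is $\overline{i}\in(0,p_2]$, not $(1,p_2]$.

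Concretely, take $p_1=1$, $p_2=2$, $r=1$, $i=1$, $j=3$. Then $\langle S_1,S_3\rangle_o=0$. Also $\langle S_3,S_1\rangle_o=\langle S_3,S_4\rangle(-1)^d=(-1)^{d+1}$, since $\overline{3}=0$ gives an arrow $3\to 4$. Hence $(S_1,S_3)_o=(-1)^d\neq 0$, whereas the printed table gives $0$.

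So the sixth row of (2) contains a typo. The paper itself uses $\overline{i}\in(0,p_2]$ in the corresponding relations of groups (2) and (3) in Section~\ref{s:derived-Hall-of-zigzag}. Record the corrected interval rather than trying to reconcile your computation with $(1,p_2]$.

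You also need not restrict to $r>0$. The identity $\sigma^{-rp}S_j=S_{j+nrp}$ and your disjointness argument are insensitive to the sign of $r$. Moreover, $r=-1$ is exactly the case used in Theorem~\ref{main-theorem}(a).
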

In the second to the last row in (2) by $\overline{i}=p_2$ we actually mean $\overline{i}=-p_1$.

\subsubsection{The quiver $Q^l$}
\label{ss:orbit-Euler-form-linear}

We consider the quiver $Q^l$ with quiver-automorphism $\sigma$ introduced in the beginning of this section. Recall that $\sigma$ takes the vertex $i$ to $i-1$. We also denote by $\sigma$ the corresponding push-out functor on $\rep^+(Q^l)$, which restricts to an automorphism of $\rep^b(Q^l)$. For $r\in\mathbb{Z}$, $\sigma^r$ as an automorphism of $\rep^b(Q^l)$ always satisfies the condition (HF), and $\sigma^r$ as an automorphism of $\rep^+(Q^l)$ satisfies the condition (HF) if and only if $r>0$. Next we list the values of the orbital Euler form for simple representations and indecomposable projective representations.

\begin{lemma}
\label{lem:orbital-Euler-form-linear}
Let $d$ and $r$ be integers with $d\geq 1$ and $r\neq 0$.
The values of the $d$-twisted left $\sigma^r$-orbital Euler form for simple representations of $Q^l$ is
\begin{itemize}
\item[(1)] If $r=-1$, then
\[
    \langle S_i,S_j\rangle_o=\begin{cases}
    (-1)^{k}[(-1)^{d+1}+1] & \text{if }j=i+k~(k>1)\\
    (-1)^{d} & \text{if } j=i+1\\
    0 & \text{otherwise}.
    \end{cases}
    \]

\item[(2)] If $r=1$, then
\[
    \langle S_i,S_j\rangle_o=\begin{cases}
    (-1)^{k}[(-1)^{d+1}+1] & \text{if }j=i-k~(k>0)\\
    (-1)^{d+1} & \text{if } j=i\\
    0 & \text{otherwise}.
    \end{cases}
    \]

\item[(3)] If $|r|>1$, then
\[
    \langle S_i,S_j\rangle_o=\begin{cases}
    (-1)^{dk} & \text{if }j=i-kr~(k>0)\\
    (-1)^{dk+1} & \text{if } j=i-(kr-1)~(k>0)\\
    0 & \text{otherwise}.
    \end{cases}
    \]
\end{itemize}
\end{lemma}
\begin{proof}
We give a proof when $r=-1$ using Lemma~\ref{lem:Euler-form-linear}, and the proof of the other cases is similar.
For any $i, j \in \mathbb{Z}$, denote $k=j-i$. Then
\begin{align*}
\langle S_i,S_j\rangle_o &=\langle S_{i},S_{i+k}\rangle_o=\sum_{p\in \mathbb{Z}^+}\langle S_i,\sigma^{-rp}S_{i+k}\rangle\cdot(-1)^{dp}=\sum_{p\in \mathbb{Z}^+}\langle S_i,\sigma^{p}S_{i+k}\rangle\cdot(-1)^{dp}\\
&=\sum_{p\in \mathbb{Z}^+}\langle S_i,S_{i+k-p}\rangle\cdot(-1)^{dp}\\
&=
\begin{cases}
    \langle S_i,S_{i}\rangle\cdot(-1)^{dk} +\langle S_i,S_{i+1}\rangle\cdot(-1)^{d(k-1)} & \text{if }k>1\\
    \langle S_i,S_{i}\rangle\cdot(-1)^{dk} & \text{if } k=1\\
    0 & \text{if } k\leq 0
    \end{cases}\\
    &=
\begin{cases}
    (-1)^{dk}[(-1)^{d+1}+1] & \text{if }j=i+k ~(k>1)\\
    (-1)^{d} & \text{if } j=i+1\\
    0 & \text{otherwise.}
    \end{cases}\\
&=
\begin{cases}
    (-1)^{k}[(-1)^{d+1}+1] & \text{if }j=i+k ~(k>1)\\
    (-1)^{d} & \text{if } j=i+1\\
    0 & \text{otherwise.}
    \end{cases}
\qedhere
\end{align*}
\end{proof}

\begin{lemma}
	\label{lem:orbital-Euler-form-linear-projectives}
	Let $d$ and $r$ be integers with $d\geq 1$ and $r>0$.
	The values of the $d$-twisted left $\sigma^r$-orbital Euler form for indecomposalbe projective and simple representations of $Q^l$ is
	\begin{itemize}
		\item[(1)]
		$\langle P_i,P_j\rangle_o=\begin{cases} \sum\limits_{p=1}^{k}(-1)^{dp} &  kr\leq i-j<(k+1)r~(k>0)\\ 0 &\text{otherwise}\end{cases}.$ 
		\item[(2)] $
		\langle P_i,S_j\rangle_o=\begin{cases}
			(-1)^{dk} & \text{if }j=i-kr~(k>0)\\
			0 & \text{otherwise}
		\end{cases}.
		$
		\item[(3)]$
		\langle S_j,P_i\rangle_o=\begin{cases}
			(-1)^{dk+1} & \text{if }j=i+kr-1~(k>0)\\
			0 & \text{otherwise}
		\end{cases}$.
	\end{itemize}
\end{lemma}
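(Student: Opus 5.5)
We compute directly from the definition of the $d$-twisted left $\sigma^r$-orbital Euler form, exactly as in the proof of Lemma~\ref{lem:orbital-Euler-form-linear}, but now feeding in parts (2), (3) and (4) of Lemma~\ref{lem:Euler-form-linear} instead of part (1). The first thing to record is how $\sigma$ acts on the relevant objects. Since the quiver-automorphism sends the vertex $i$ to $i-1$ and preserves the linear orientation, the push-out functor satisfies $\sigma S_j\cong S_{j-1}$. It also sends paths starting at $j$ to paths starting at $j-1$, so $\sigma P_j\cong P_{j-1}$. Hence $\sigma^{-rp}S_j\cong S_{j+rp}$ and $\sigma^{-rp}P_j\cong P_{j+rp}$. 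Because $r>0$, $\sigma^r$ satisfies (HF) on $\rep^+(Q^l)$, so each sum below is finite; this will also be visible directly.

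For (1) we write
\[
\langle P_i,P_j\rangle_o=\sum_{p\in\mathbb{Z}^+}\langle P_i,P_{j+rp}\rangle\,(-1)^{dp}.
\]
By Lemma~\ref{lem:Euler-form-linear}(2) the summand equals $(-1)^{dp}$ when $j+rp\le i$, that is when $p\le (i-j)/r$, and equals $0$ otherwise. If $i-j<r$, no positive $p$ qualifies and the sum is $0$. Otherwise we set $k=\lfloor (i-j)/r\rfloor\geq 1$, which is equivalent to $kr\le i-j<(k+1)r$. The qualifying values are then exactly $p=1,\dots,k$, and the sum is $\sum_{p=1}^k(-1)^{dp}$, as claimed.

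For (2) we have $\langle P_i,S_j\rangle_o=\sum_{p\ge1}\langle P_i,S_{j+rp}\rangle(-1)^{dp}$. By Lemma~\ref{lem:Euler-form-linear}(4) the summand is nonzero only when $j+rp=i$. Such a $p\geq 1$ exists exactly when $j=i-kr$ with $k>0$; in that case $p=k$ is the unique term and it contributes $(-1)^{dk}$.

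For (3) we have $\langle S_j,P_i\rangle_o=\sum_{p\ge1}\langle S_j,P_{i+rp}\rangle(-1)^{dp}$. By Lemma~\ref{lem:Euler-form-linear}(3) the summand equals $-(-1)^{dp}$ precisely when $j=i+rp-1$ and is $0$ otherwise. So the only nonzero case is $j=i+kr-1$ with $k>0$, where the value is $(-1)^{dk+1}$.

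There is no real obstacle here. The only point needing care is the direction of $\sigma$ on projectives, namely that $\sigma^{-rp}P_j\cong P_{j+rp}$ rather than $P_{j-rp}$. This sign convention is what makes $r>0$ the correct hypothesis, and it produces the ranges $j=i-kr$ and $j=i+kr-1$ with $k>0$.
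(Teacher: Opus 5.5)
Your proposal is correct and takes the same route as the paper. You expand the definition using $\sigma^{-rp}P_j\cong P_{j+rp}$ and $\sigma^{-rp}S_j\cong S_{j+rp}$, then read off each summand from Lemma~\ref{lem:Euler-form-linear}; the only difference is that the paper writes out case (1) alone and calls the others similar, whereas you carry out all three (and your $k=\lfloor (i-j)/r\rfloor$ description of the range is cleaner than the paper's displayed condition).
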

\begin{proof}
	We give a proof for the case (1), and the proof of the other cases is similar.
	$\forall i, j \in \mathbb{Z}$, 
	\begin{align*}
		\langle P_i,P_j\rangle_o &=\sum_{p\in \mathbb{Z}^+}\langle P_i,\sigma^{-rp}P_{j}\rangle\cdot(-1)^{dp}=\sum_{p\in \mathbb{Z}^+}\langle P_i,P_{j+rp}\rangle\cdot(-1)^{dp}\\
		&=\begin{cases} \sum\limits_{p=1}^{k}(-1)^{dp} & k\leq \frac{i-j}{r}<(k+1)r~(k>0)\\ 0 &\text{otherwise}\end{cases}.
		\qedhere
	\end{align*}
\end{proof} 

The following are immediate corollaries of Lemma~\ref{lem:orbital-Euler-form-linear} and Lemma~\ref{lem:orbital-Euler-form-linear-projectives}. 

\begin{corollary}
\label{cor:antisymmetric-orbital-Euler-form-linear}
Let $d$ and $r$ be integers with $d\geq 1$ and $r\neq 0$.
The values of $(-,-)_o$ for simple representations of $Q^{l}$ is
\begin{itemize}
\item[(1)] If $r=-1$, then
    \[
    (S_i,S_j)_o=\begin{cases}
    (-1)^{k}[(-1)^{d+1}+1] & \text{if }j=i+k~(k>1)\\
    (-1)^{d} & \text{if } j=i+1\\
    (-1)^{k+1}[(-1)^{d+1}+1] & \text{if }j=i-k~(k>1)\\
    (-1)^{d+1} & \text{if } j=i-1\\
    0 & \text{if } j=i.
    \end{cases}
    \]
\item[(2)] If $r=1$, then
    \[
    (S_i,S_j)_o=\begin{cases}
    (-1)^{k}[(-1)^{d+1}+1] & \text{if }j=i-k~(k>0)\\
    (-1)^{k+1}[(-1)^{d+1}+1] & \text{if }j=i+k~(k>0)\\
    0 & \text{if } j=i.
    \end{cases}
    \]
    \item[(3)] If $|r|>1$, then
\[
    (S_i,S_j)_o=\begin{cases}
    (-1)^{dk} & \text{if }j=i-kr~(k>0)\\
    (-1)^{dk+1} & \text{if } j=i-(kr-1)~(k>0)\\
    (-1)^{dk+1} & \text{if }j=i+kr~(k>0)\\
    (-1)^{dk} & \text{if } j=i+(kr-1)~(k>0)\\
    0 & \text{otherwise}.
    \end{cases}
    \]
\end{itemize}
\end{corollary}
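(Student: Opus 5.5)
The plan is to read off $(S_i,S_j)_o=\langle S_i,S_j\rangle_o-\langle S_j,S_i\rangle_o$ directly from Lemma~\ref{lem:orbital-Euler-form-linear}. Write $j=i+m$. In each case that lemma describes $\langle S_i,S_{i+m}\rangle_o$ as a function of $m$ alone, say $f(m)$. Swapping the roles of $i$ and $j$ therefore gives $\langle S_j,S_i\rangle_o=f(-m)$, so that
\[
(S_i,S_{i+m})_o=f(m)-f(-m).
\]
It then suffices, in each of the three cases $r=-1$, $r=1$ and $|r|>1$, to check that the supports of $f(m)$ and $f(-m)$ are disjoint or can be combined without difficulty, and to record the signs.

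For $r=-1$, $f$ is supported on $m\geq 1$. We have $f(1)=(-1)^d$ and $f(k)=(-1)^k[(-1)^{d+1}+1]$ for $k>1$. The term $f(-m)$ is therefore supported on $m\leq -1$, so the two supports are disjoint, and for $j=i-k$ with $k>1$ we obtain $-f(k)=(-1)^{k+1}[(-1)^{d+1}+1]$. The case $j=i-1$ gives $-(-1)^d=(-1)^{d+1}$, and $j=i$ gives $0$. This matches (1).

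For $r=1$, $f$ is supported on $m\leq 0$. We have $f(-k)=(-1)^k[(-1)^{d+1}+1]$ for $k>0$ and $f(0)=(-1)^{d+1}$. At $m=0$ the two contributions cancel, giving $0$. For $m=-k<0$ only $f(m)$ contributes, and for $m=k>0$ only $-f(-k)$ contributes, producing the sign $(-1)^{k+1}$. This matches (2).

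For $|r|>1$, $f$ is supported on the set $\{-kr\}\cup\{-(kr-1)\}$ with $k>0$. The term $-f(-m)$ is supported on $\{kr\}\cup\{kr-1\}$, again with $k>0$, and negating the values gives the two new lines of the formula.

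The one step that needs actual care is the overlap of these supports when $|r|>1$. Since $k$ ranges over positive integers, the set $\{-kr,-(kr-1)\}$ lies entirely on one side of $0$, according to the sign of $r$, and the reflected set lies on the other side. The only conceivable clash is at $m=0$, and $m=0$ would require $kr=1$, which is impossible when $|r|>1$. The two sets within $f$'s own support are also distinct, because $kr=k'r-1$ forces $|r|$ to divide $1$. So the supports are disjoint, the entries simply add, and (3) follows. No other obstacles are expected, since everything reduces to bookkeeping on Lemma~\ref{lem:orbital-Euler-form-linear}.
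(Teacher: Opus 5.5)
Your proposal is correct and follows the paper's route: the paper states this corollary as an immediate consequence of Lemma~\ref{lem:orbital-Euler-form-linear}, obtained by computing $\langle S_i,S_j\rangle_o-\langle S_j,S_i\rangle_o$. Your translation-invariant bookkeeping $f(m)-f(-m)$ and your check that the supports are disjoint when $|r|>1$ supply exactly the details the paper leaves implicit.
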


\begin{corollary}
	\label{cor:antisymmetric-orbital-Euler-form-linear-projectives}
	Let $d$ and $r$ be integers with $d\geq 1$ and $r > 0$.
	The values of $(-,-)_o$ for indecomposalbe projective and simple representations of $Q^{l}$ is
	\begin{itemize}
		\item[(1)] $
		(P_i,P_j)_o=\begin{cases}
			\sum\limits_{p=1}^{k}(-1)^{dp} & kr\leq i-j<(k+1)r~(k>0)\\
			\sum\limits_{p=1}^{k}(-1)^{dp+1} & -(k+1)r<i-j \leq -kr~(k>0)\\ 0 &\text{otherwise}\end{cases}.$
		\item[(2)] $
		(P_i,S_j)_o=\begin{cases}
			(-1)^{dk} & \text{if }j=i-kr~(k>0)\\
			(-1)^{dk} & \text{if }j=i+kr-1~(k>0)\\
			0 & \text{otherwise}
		\end{cases}$.
	\end{itemize}
\end{corollary}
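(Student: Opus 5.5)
The two formulas follow from the definition of the antisymmetric form, $(M,N)_o=\langle M,N\rangle_o-\langle N,M\rangle_o$. So the plan is to evaluate both terms with Lemma~\ref{lem:orbital-Euler-form-linear-projectives} and then read off which cases survive. Throughout $r>0$, which is exactly the range in which $\sigma^r$ satisfies (HF) on $\rep^+(Q^l)$, so all the orbital forms involved are defined.

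For (1), Lemma~\ref{lem:orbital-Euler-form-linear-projectives}(1) gives $\langle P_i,P_j\rangle_o=\sum_{p=1}^{k}(-1)^{dp}$ when $kr\leq i-j<(k+1)r$ with $k>0$, and $0$ otherwise. Applying the same formula with $i$ and $j$ swapped, $\langle P_j,P_i\rangle_o$ equals $\sum_{p=1}^{k}(-1)^{dp}$ exactly when $kr\leq j-i<(k+1)r$, that is, $-(k+1)r<i-j\leq -kr$. These two supports are disjoint: the first needs $i-j\geq r>0$ and the second needs $i-j\leq -r<0$. Hence on the first region $(P_i,P_j)_o=\langle P_i,P_j\rangle_o$, and on the second $(P_i,P_j)_o=-\sum_{p=1}^k(-1)^{dp}=\sum_{p=1}^k(-1)^{dp+1}$. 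For $|i-j|<r$ both terms vanish. This is precisely the stated formula.

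For (2), write $(P_i,S_j)_o=\langle P_i,S_j\rangle_o-\langle S_j,P_i\rangle_o$. By Lemma~\ref{lem:orbital-Euler-form-linear-projectives}(2), the first term is $(-1)^{dk}$ when $j=i-kr$ with $k>0$. By Lemma~\ref{lem:orbital-Euler-form-linear-projectives}(3), the second term is $(-1)^{dk+1}$ when $j=i+kr-1$ with $k>0$, so subtracting it contributes $-(-1)^{dk+1}=(-1)^{dk}$ there. The supports are again disjoint, since the first has $j\leq i-r<i$ and the second has $j\geq i+r-1\geq i$. So the two contributions never overlap, and in all other cases the value is $0$.

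The only step needing care is this disjointness check, particularly the boundary case $r=1$ in (2). There the second support begins at $j=i$, while the first still requires $j\leq i-1$, so the two remain disjoint. Everything else is direct substitution.
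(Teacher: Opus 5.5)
Your proposal is correct and is the paper's argument: the paper presents this corollary as an immediate consequence of Lemma~\ref{lem:orbital-Euler-form-linear-projectives}, obtained by antisymmetrizing via $(M,N)_o=\langle M,N\rangle_o-\langle N,M\rangle_o$. Your explicit check that the supports of the two terms are disjoint, including the boundary case $r=1$ in (2), is exactly the bookkeeping the paper leaves implicit.
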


\section{Twisted root categories and Hall algebras}
\label{s:twisted-root-category}
In this section we recall the definition of twisted root categories of a Hom-finite and Ext-finite hereditary abelian categories and study the basic properties and derived Hall algebras of them.
\subsection{The derived Hall algebra of a triangulated category}
\label{ss:derived-Hall-algebra-of-triangulated-cat}
Let $\mathcal{C}$ be a Hom-finite Krull--Schmidt triangulated $\fk$-category. Assume that $\cc$ is \emph{left locally homologically finite}, that is,  for any $X, Y\in \mathcal{C}$, the space $\Hom_\cc(X,\Sigma^{-l}Y)$ vanishes for all but finitely many positive integers $l$.

By \cite{Toen06,XiaoXu06}, for any $X,Y,Z\in \mathcal{C}$, the \emph{derived Hall number} is defined as\footnote{Note that this is opposite to the one in \cite{Toen06} and \cite{XiaoXu06}.}
\[F_{X,Y}^Z:=\frac{|\Hom_\cc(Y,Z)_X|}{|\Aut_\cc(Y)|}\cdot \frac{\{Y,Z\}}{\{Y,Y\}},\]
where $\Hom_\cc(Y,Z)_X$ is the set of morphisms from $Y$ to $Z$ with cone isomorphic to $X$ and $\{Y,Z\}=\Pi_{l>0}|{\rm Hom}_\mathcal{C}(Y,\Sigma^{-l}Z)|^{(-1)^l}$. The above formula is called  To\"{e}n's formula. There is also the derived Riedtmann--Peng formula \cite{XiaoXu15}:
\[
F_{X,Y}^Z=|\Hom_\cc(\Sigma^{-1}X,Y)_Z|\cdot\{\Sigma^{-1}X,Y\}\cdot\frac{|\Aut_\cc(Z)|}{|\Aut_\cc(X)|\cdot|\Aut_\cc(Y)|}\cdot\frac{\{Z,Z\}}{\{X,X\}\cdot\{Y,Y\}}.
\]

\begin{lemma}
\label{lemma-derived-Hall-number-for-direct-sum}
Let $X,Y,Z\in \mathcal{C}$ and $t\in\mathbb{Z}^+$. Then
\begin{align*}
\{X\oplus Y,Z\}&=\{X,Z\}\cdot \{Y,Z\},\\
\{Z,X\oplus Y\}&=\{Z,X\}\cdot \{Z,Y\},\\
\{X,\Sigma^t Y\}&=\{X,Y\}^{(-1)^t}\cdot(\Pi_{l=-t+1}^0|\Hom_\cc(X,\Sigma^{-l}Y)|^{(-1)^l})^{(-1)^t},\\
\{X,Y\}&=\{\Sigma^t X,Y\}^{(-1)^t}\cdot \Pi_{l=1}^{t}|\Hom_\cc(X,\Sigma^{-l}Y)|^{(-1)^l}.
\end{align*}
\end{lemma}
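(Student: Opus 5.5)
The four identities follow directly from the definition $\{Y,Z\}=\prod_{l>0}|\Hom_\cc(Y,\Sigma^{-l}Z)|^{(-1)^l}$, together with additivity of Hom and a reindexing of the product. All products involved are finite because $\cc$ is left locally homologically finite: only finitely many factors differ from $1$.

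\emph{Additivity.} For the first two identities, use $\Hom_\cc(X\oplus Y,\Sigma^{-l}Z)\cong\Hom_\cc(X,\Sigma^{-l}Z)\oplus\Hom_\cc(Y,\Sigma^{-l}Z)$. Since the cardinality of a direct sum of finite $\fk$-vector spaces is the product of the cardinalities, each factor of $\{X\oplus Y,Z\}$ splits as the product of the corresponding factors of $\{X,Z\}$ and $\{Y,Z\}$. Taking the (finite) product over $l>0$ gives the first identity. The second is identical, using additivity of $\Sigma^{-l}$ and of Hom in the second variable.

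\emph{Reindexing for the third identity.} Write
\[
\{X,\Sigma^tY\}=\prod_{l>0}|\Hom_\cc(X,\Sigma^{-(l-t)}Y)|^{(-1)^l}
\]
and substitute $m=l-t$, so $(-1)^l=(-1)^t(-1)^m$ and $m$ ranges over $m>-t$. Splitting the range into $m>0$ and $-t+1\le m\le 0$ gives
\[
\{X,\Sigma^tY\}=\Bigl(\prod_{m>0}|\Hom_\cc(X,\Sigma^{-m}Y)|^{(-1)^m}\cdot\prod_{m=-t+1}^{0}|\Hom_\cc(X,\Sigma^{-m}Y)|^{(-1)^m}\Bigr)^{(-1)^t},
\]
which is exactly the claimed formula.

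\emph{The fourth identity.} Apply the same trick in the first variable. Since $\Hom_\cc(\Sigma^tX,\Sigma^{-l}Y)\cong\Hom_\cc(X,\Sigma^{-(l+t)}Y)$, substituting $m=l+t$ gives
\[
\{\Sigma^tX,Y\}=\Bigl(\prod_{m>t}|\Hom_\cc(X,\Sigma^{-m}Y)|^{(-1)^m}\Bigr)^{(-1)^t}.
\]
Raising both sides to the power $(-1)^t$ recovers the product over $m>t$. Multiplying it by the missing factors $m=1,\dots,t$ then recovers $\{X,Y\}$.

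The only point needing care is the sign bookkeeping: $(-1)^{\pm t}=(-1)^t$, and $\bigl((\cdot)^{(-1)^t}\bigr)^{(-1)^t}=(\cdot)$. Otherwise the argument is routine.
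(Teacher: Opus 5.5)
Your proposal is correct and follows essentially the same route as the paper: additivity of $\Hom$ for the first two identities, and a shift of the index $l$ (with the sign $(-1)^{l\pm t}=(-1)^t(-1)^l$) followed by splitting off the finitely many boundary factors for the last two. The only difference is cosmetic: for the fourth identity the paper starts from $\{X,Y\}$ and splits off the factors $l=1,\dots,t$, whereas you reindex $\{\Sigma^tX,Y\}$ directly.
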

\begin{proof}
The first two equalities are easy. For the last two we have
\begin{align*}
\{X,\Sigma^t Y\}&=\Pi_{l>0}|\Hom_\cc(X,\Sigma^{-l}\Sigma^t Y)|^{(-1)^l}\\
&=\Pi_{l>0}|\Hom_\cc(X,\Sigma^{-(l-t)}Y)|^{(-1)^l}\\
&=\Pi_{l>-t}|\Hom_\cc(X,\Sigma^{-l}Y)|^{(-1)^{l+t}}\\
&=(\{X,Y\}\cdot\Pi_{l=-t+1}^0|\Hom_\cc(X,\Sigma^{-l}Y)|^{(-1)^l})^{(-1)^t},\\
\{X,Y\}&=\Pi_{l=1}^t|\Hom_\cc(X,\Sigma^{-l}Y)|^{(-1)^l}\cdot\Pi_{l>t}|\Hom_\cc(X,\Sigma^{-l}Y)|^{(-1)^l}\\
&=\Pi_{l=1}^t|\Hom_\cc(X,\Sigma^{-l}Y)|^{(-1)^l}\cdot\Pi_{l>t}|\Hom_\cc(\Sigma^t X,\Sigma^{-(l-t)}Y)|^{(-1)^l}\\
&=\Pi_{l=1}^t|\Hom_\cc(X,\Sigma^{-l}Y)|^{(-1)^l}\cdot\Pi_{l>0}|\Hom_\cc(\Sigma^t X,\Sigma^{-l}Y)|^{(-1)^{l+t}}\\
&=\{\Sigma^t X,Y\}^{(-1)^t}\cdot \Pi_{l=1}^{t}|\Hom_\cc(X,\Sigma^{-l}Y)|^{(-1)^l}.\qedhere
\end{align*}
\end{proof}

The \emph{derived Hall algebra} $\mathcal{H(C)}$ of $\mathcal{C}$ is the $\mathbb{Q}$-vector space with basis $\mathcal{P(C)}$, and the multiplication is defined by
\[[X][Y]=\sum_{[Z]\in\cp(\cc)}F_{X,Y}^Z [Z],~ \forall [X], [Y]\in \mathcal{P(C)}.\]
It is shown in \cite{Toen06,XiaoXu06} that $\mathcal{H(C)}$ is an associative algebra with unit $[0]$. The shift functor $\Sigma$ induces an algebra automorphism of $\ch(\cc)$, which we still denote by $\Sigma$.

\medskip
Following \cite[Definition 2.2]{Jorgensen22} we say that an additive subcategory $\ca$ of $\cc$ is a \emph{proper abelian subcategory} of $\cc$, if $\ca$ is abelian and $\xymatrix@C=0.7pc{0\ar[r] & N\ar[r]^f & L\ar[r]^g & M\ar[r] & 0}$ is a short exact sequence in $\ca$ if and only if $L, M,N\in\ca$ and there exists a triangle $\xymatrix@C=0.7pc{N\ar[r]^f & L\ar[r]^g & M\ar[r] & \Sigma N}$ in $\cc$. Typical examples are the heart of a $t$-structure and more generally the extension closure of a $w$-simple-minded system for $w\geq 2$, see \cite{Jorgensen22}.

\begin{lemma}
\label{lem:derived-Hall-number-for-proper-abelian-subcategory}
Let $\ca$ be an extension-closed proper abelian subcategory of $\cc$. 
Then for $M,N\in\ca$, the form $\{M,N\}$ is multiplicative in both $M$ and $N$. Moreover, we have
\[
[M][N]=\{N,M\}[M]\diamond [N].
\]
\end{lemma}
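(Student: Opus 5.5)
The plan is to prove the two claims separately, starting with multiplicativity and then comparing Toën's formula with the Hall number in $\ca$ term by term.

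\emph{Multiplicativity.} This is immediate from Lemma~\ref{lemma-derived-Hall-number-for-direct-sum}: the first two equalities there show that $\{-,-\}$ is multiplicative in each variable for arbitrary objects of $\cc$. In particular it is multiplicative for objects of $\ca$.

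\emph{The product formula.} Fix $M,N\in\ca$ and expand $[M][N]=\sum_{[Z]}F^Z_{M,N}[Z]$ using Toën's formula
\[
F^Z_{M,N}=\frac{|\Hom_\cc(N,Z)_M|}{|\Aut_\cc(N)|}\cdot\frac{\{N,Z\}}{\{N,N\}}.
\]
The steps, in order, are as follows.

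First, I would restrict which $Z$ can occur. If $F^Z_{M,N}\neq 0$, there is a triangle $N\to Z\to M\to\Sigma N$. Since $\ca$ is extension-closed, $Z$ is isomorphic to an object of $\ca$, and I take $Z=L\in\ca$. So only $[L]\in\cp(\ca)$ contribute.

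Second, I would compare morphism counts. For $L\in\ca$, a morphism $f\colon N\to L$ with cone isomorphic to $M$ gives a triangle $N\to L\to M\to\Sigma N$ with all three terms in $\ca$. Because $\ca$ is proper abelian, this triangle comes from a short exact sequence, so $f$ is a monomorphism in $\ca$ with cokernel $M$. Conversely, a monomorphism in $\ca$ with cokernel $M$ extends to a triangle whose cone is $M$. Hence $\Hom_\cc(N,L)_M=\Hom_\ca(N,L)_M$, and since $\ca$ is a full subcategory, $\Aut_\cc(N)=\Aut_\ca(N)$. Therefore the first factor of $F^L_{M,N}$ equals $g^L_{MN}$, by the formula $g^L_{MN}=|\Hom_\ca(N,L)_M|/|\Aut_\ca(N)|$ recalled in Section~\ref{ss:Ringel-Hall-algebra}. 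When $g^L_{MN}=0$, both sides vanish.

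Third, I would show $\{N,L\}=\{N,M\}\{N,N\}$ whenever $g^L_{MN}\neq 0$. This is the main obstacle, since $L$ is generally not $M\oplus N$, so plain multiplicativity in the second variable does not apply. Apply $\Hom_\cc(N,-)$ to the triangle $N\to L\to M\to\Sigma N$ and take the long exact sequence. The key input is that $\Hom_\cc(X,\Sigma^{-l}Y)=0$ for $X,Y\in\ca$ and $l>0$, i.e.\ there are no negative extensions between objects of $\ca$. I expect to derive this from properness: if $\ca$ is the heart of a $t$-structure it is standard, and I would check that it follows in general from the definition in \cite{Jorgensen22}. Granting this vanishing, every term $\Hom_\cc(N,\Sigma^{-l}Y)$ with $l>0$ and $Y\in\{N,L,M\}$ is zero. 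Then $\{N,L\}=\{N,M\}=\{N,N\}=1$, and the claimed identity holds trivially.

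Finally, combining these, the second factor of $F^L_{M,N}$ is $\{N,L\}/\{N,N\}=\{N,M\}$. Hence $F^L_{M,N}=\{N,M\}\,g^L_{MN}$ for all $[L]\in\cp(\ca)$, which gives $[M][N]=\{N,M\}\,[M]\diamond[N]$.
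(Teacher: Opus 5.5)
Your Step 3 relies on a false claim, so there is a genuine gap. A proper abelian subcategory need not satisfy $\Hom_\cc(X,\Sigma^{-l}Y)=0$ for $X,Y\in\ca$ and $l>0$. That vanishing holds for hearts of $t$-structures. Jørgensen's definition, however, only matches short exact sequences in $\ca$ with triangles whose three terms lie in $\ca$; it says nothing about negative extensions.

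The paper's own main example refutes the claim. Take $\cc=\cd^b(\ca)/\Sigma^d\sigma^{-1}$ with $d\geq 2$. Then $\ca$ is an extension-closed proper abelian subcategory by Lemma~\ref{lem:A-proper-abelian}. Yet Lemma~\ref{lem:when-is-root-category-left-homologically-finite}(a) gives $\Hom_\cc(M,\Sigma^{-d}\sigma M)\cong\End_\ca(M)\neq 0$ for $M\neq 0$. More generally, part (d) of that lemma gives $\{M,N\}=q^{\langle M,N\rangle_o}$, which is typically $\neq 1$.

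If your vanishing held, the lemma would collapse to $[M][N]=[M]\diamond[N]$. Its whole point is that the twist $\{N,M\}$ can be nontrivial. This twist is the source of the $q$-powers in Proposition~\ref{prop:Hall-subalgebra-in-root-category}(a) and in all the later relations.

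Relatedly, ``multiplicative'' in the statement means multiplicative along short exact sequences of $\ca$. That is, for $0\to N\to L\to M\to 0$ in $\ca$ and $X\in\ca$, one has $\{X,L\}=\{X,N\}\{X,M\}$ and $\{L,X\}=\{N,X\}\{M,X\}$. This is how it is used in the proofs of Corollaries~\ref{cor:commutator-for-neighbouring-pieces-projectives} and~\ref{cor:commutator-for-neighbouring-pieces-projective-simple}. The direct-sum multiplicativity of Lemma~\ref{lemma-derived-Hall-number-for-direct-sum} is not what is asserted, so your first paragraph does not establish the first claim either.

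The missing idea is to use the monomorphism you already produced in Step 2, rather than any vanishing. Since $f\colon N\to L$ is a monomorphism in $\ca$ and $\ca$ is full in $\cc$, the map $f_*\colon\Hom_\cc(N,N)\to\Hom_\cc(N,L)$ is injective. Hence the connecting map $\Hom_\cc(N,\Sigma^{-1}M)\to\Hom_\cc(N,N)$ vanishes. The long exact sequence of $\Hom_\cc(N,-)$ applied to $N\to L\to M\to\Sigma N$ therefore contains the exact sequence
\[
\cdots\to\Hom_\cc(N,\Sigma^{-2}M)\to\Hom_\cc(N,\Sigma^{-1}N)\to\Hom_\cc(N,\Sigma^{-1}L)\to\Hom_\cc(N,\Sigma^{-1}M)\to 0,
\]
which has only finitely many nonzero terms by left local homological finiteness. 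The alternating product of the cardinalities along it equals $1$, which is precisely $\{N,L\}=\{N,N\}\{N,M\}$.

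Run the same argument with $\Hom_\cc(X,-)$ for any $X\in\ca$. Dually, use $\Hom_\cc(-,X)$ together with the fact that $L\to M$ is an epimorphism in $\ca$. This gives multiplicativity along short exact sequences in both arguments. This is the paper's argument; your Steps 1 and 2 and the final assembly agree with it.
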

\begin{proof}
First, since $\ca$ is a proper abelian subcategory, it follows that if $L\in\ca$, then $\Hom_\cc(N,L)_M=\Hom_\ca(N,L)_M$, and therefore
\[
F_{MN}^L=\frac{|\Hom_\cc(N,L)_M|}{|\Aut_\cc(N)|}\frac{\{N,L\}}{\{N,N\}}=\frac{|\Hom_\ca(N,L)_M|}{|\Aut_\ca(N)|}\frac{\{N,L\}}{\{N,N\}}=g_{MN}^L\frac{\{N,L\}}{\{N,N\}}.
\]
Secondly, since $\ca$ is extension-closed, it follows that $F_{MN}^L=0$ if $L\not\in\ca$. So
\[
[M][N]=\sum_{[L]\in\cp(\ca)}\frac{\{N,L\}}{\{N,N\}}g_{MN}^L[L].
\]
Finally, assume that $\xymatrix@C=0.7pc{N\ar[r]^f & L\ar[r]^g & M\ar[r] & \Sigma N}$ is a triangle in $\cc$ with $L\in\ca$. Then the map $f_*\colon\Hom_{\cc}(N,N)\to\Hom_{\cc}(N,L)$ is the same as the map $f_*\colon\Hom_{\ca}(N,N)\to\Hom_{\ca}(N,L)$, which is injective. Therefore the long exact sequence obtained by applying $\Hom_\cc(N,?)$ to the above triangle breaks into two long exact sequences, one of which is
\[
\xymatrix@R=1pc{
\ldots\ar[r] & \Hom_\cc(N,\Sigma^{-1} N)\ar[r] & \Hom_\cc(N,\Sigma^{-1}L)\ar[r] & \Hom_\cc(N,\Sigma^{-1}M)\ar[r] & 0.
}
\]
This implies that
\[
\frac{\{N,L\}}{\{N,N\}}=\frac{\Pi_{l>0}|\Hom_\cc(N,\Sigma^{-l}L)|^{(-1)^l}}{\Pi_{l>0}|\Hom_\cc(N,\Sigma^{-l}N)|^{(-1)^l}}=\Pi_{l>0}|\Hom_\cc(N,\Sigma^{-l}M)|^{(-1)^l}=\{N,M\}.
\]
This proves the second statement and shows that $\{-,-\}$ is multiplicative in the second argument.
Similarly, we show that it is multiplicative in the first argument.
\end{proof}

When $\ca$ is the heart of a $t$-structure, then $\{M,N\}=1$ for $M,N\in\ca$, so $[M][N]=[M]\diamond[N]$. This equality was observed in \cite[Section 6]{Toen06}.

\subsection{Twisted root categories}
\label{ss:twisted-root-category}
Let $\mathcal{A}$ be a Hom-finite and Ext-finite skeletally small hereditary abelian $\fk$-category, and $\sigma$ be a $\fk$-linear automorphism of $\mathcal{A}$. $\sigma$ induces a triangle automorphism of the bounded derived category $\mathcal{D}=\cd^b(\mathcal{A})$ of $\mathcal{A}$. which we also denote by $\sigma$. We have $\Sigma\circ \sigma=\sigma\circ\Sigma$.

Let $d\neq 0$ be an integer, set $F=\Sigma^d\circ \sigma^{-1}$ and consider the orbit category  $\mathcal{C}=\mathcal{D}/F$: the objects in $\mathcal{C}$ are the objects $X$ in $\mathcal{D}$ and morphisms are given by
\[{\rm Hom}_\mathcal{C}(X,Y)=\bigoplus_{p\in \mathbb{Z}}{\rm Hom}_\mathcal{D}(X,F^pY).\]
According to \cite[Section 9]{Keller05} (see also \cite[Section 4]{ChenYang25}), there is a canonical triangle structure on $\mathcal{C}$ such that the canonical projection functor $\cd\to\cc$ is a triangle functor. We call this triangulated category $\cc$  the \emph{$d$-root category twisted by $\sigma^{-1}$}. The automorphism $\sigma$ of $\cd$ induces an automorphism $\sigma$ of $\cc$, still denoted by $\sigma$. The shift functor $\Sigma$ of $\cd$ induces the shift functor $\Sigma$ of $\cc$. For an object $X$ of $\cc$ we have $\Sigma^d(X)\cong \sigma(X)$, however, the functors $\Sigma^d$ and $\sigma$ are in general not isomorphic. When $\sigma=\mathrm{Id}_\ca$, this was introduced in \cite{PengXiao97,PengXiao00} as the root category for $d=2$, and in \cite{Zhang25} as the periodic derived category for general $d$. Because $\mathcal{D}/F=\mathcal{D}/F^{-1}$, we may assume $d>0$.

Assume $d>0$. Below are some homological properties of $\cc$.

\begin{lemma}
\label{lem:when-is-root-category-left-homologically-finite}
\begin{itemize}
\item[(a)] Let $M,N\in\ca$ and $l\in\mathbb{Z}$. If $d=1$, then
\[
\Hom_\cc(M,\Sigma^l N)=\Hom_\ca(M,\sigma^l N)\oplus \Ext^1_\ca(M,\sigma^{l-1}N).
\]
If $d\geq 2$, then
\[
\Hom_\cc(M,\Sigma^{l}N)=\begin{cases} 0 & \text{ if } l\neq pd, pd+1 \text{ for any }p\in\mathbb{Z}\\
\Hom_\ca(M,\sigma^{p}N) & \text{ if } l=pd \text{ for some }p\in\mathbb{Z}\\
\Ext^1_\ca(M,\sigma^{p}N) & \text{ if }l=pd+1 \text{ for some }p\in\mathbb{Z}
\end{cases}.
\]
\item[(b)] $\cc$ is Hom-finite and Krull--Schmidt. Moreover, $\ind(\cc)=\sqcup_{j=0}^{d-1}\Sigma^{j}\ind(\ca)$.
\item[(c)]
$\cc$ is left locally homologically finite if and only if
$\sigma$ satisfies the condition (HF) in Section~\ref{ss:orbit-Euler-form}.
\item[(d)] Let $M,N\in\ca$. Then $\{M,N\}=q^{\langle M,N\rangle_o}$. Further, if $d\geq 2$ and $1\leq t\leq d-1$, then
\begin{align*}
\{M,\Sigma^tN\}&=\begin{cases}\frac{1}{\{M,N\}|\Hom_\ca(M,N)|} & \text{if }t=1\\ q^{(-1)^{d-t}\langle\sigma^{-1}M,N\rangle_o} & \text{if }2\leq t\leq d-1\end{cases},\\
\{\Sigma^t M,N\}&=\begin{cases} \{M,N\}^{(-1)^t} & \text{if }1\leq t\leq d-2\\
\{M,N\}^{(-1)^{d-1}}|\Ext^1_\ca(M,\sigma^{-1}N)|^{-1} & \text{if }t=d-1\end{cases}.
\end{align*}
\end{itemize}
\end{lemma}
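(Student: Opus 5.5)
Throughout, the starting point is the defining formula for morphisms in the orbit category,
\[
\Hom_\cc(M,\Sigma^lN)=\bigoplus_{p\in\mathbb{Z}}\Hom_\cd(M,\Sigma^{l-pd}\sigma^pN),
\]
together with heredity of $\ca$. For $M,N\in\ca$ the space $\Hom_\cd(M,\Sigma^{m}\sigma^pN)$ is $\Hom_\ca(M,\sigma^pN)$ if $m=0$, is $\Ext^1_\ca(M,\sigma^pN)$ if $m=1$, and is zero otherwise.

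\emph{Part (a).} If $d\geq 2$, for a given $l$ at most one $p$ has $l-pd\in\{0,1\}$, which gives the case distinction directly. If $d=1$, the two contributing summands are $p=l$ and $p=l-1$, which gives the stated direct sum.

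\emph{Part (b).} Hom-finiteness follows from (a) after reducing to $M,N\in\ca$. The reason is that every object of $\cd$ is a finite sum of shifts of objects of $\ca$, and $\Sigma^{d}X\cong\sigma X$ in $\cc$. Only finitely many $p$ contribute: in (a) a single $p$ (for $d\geq 2$) or two values of $p$ (for $d=1$) appear for each fixed $l$.

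For Krull--Schmidt, I would show that endomorphism rings of objects $\Sigma^jM$ with $M\in\ind(\ca)$ and $0\le j<d$ are local. By (a), $\End_\cc(M)=\End_\ca(M)$ when $d\geq 2$, since for $l=0$ only $p=0$ contributes. When $d=1$ there is an extra summand $\Ext^1_\ca(M,\sigma^{-1}M)$, which is a square-zero ideal because compositions land in $\Ext^2=0$. Every object of $\cc$ is isomorphic to a finite sum of such $\Sigma^jM$. Non-isomorphism between different $(j,M)$ follows because $F$-orbits in $\ind\cd=\sqcup_{m}\Sigma^m\ind\ca$ meet each set $\Sigma^j\ind\ca$, $0\le j<d$, exactly once, up to the identification $\Sigma^d M\sim\sigma M$.

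\emph{Part (c).} By (a), $\Hom_\cc(M,\Sigma^{-l}N)$ for $l>0$ runs over $\Hom_\ca(M,\sigma^{-p}N)$ and $\Ext^1_\ca(M,\sigma^{-p}N)$ with $p$ roughly $l/d$. Vanishing for almost all positive $l$ is therefore equivalent to (HF) on pairs in $\ca$. One has to check that the finitely many $p\le 0$ which occur with $l>0$ are harmless. Passing to general objects uses additivity and the fact that shifts of objects of $\ca$ reduce to objects of $\ca$ twisted by $\sigma$, and (HF) is stable under applying $\sigma$.

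\emph{Part (d), first formula.} I would compute
\[
\{M,N\}=\prod_{l>0}|\Hom_\cc(M,\Sigma^{-l}N)|^{(-1)^l}.
\]
For $d\geq 2$, the term with $l=pd$ ($p>0$) contributes $q^{(-1)^{pd}\dim\Hom(M,\sigma^{-p}N)}$. The term with $l=pd-1$, which equals $-(-p)d+\ldots$, i.e.\ $-l=(-p)d+1$, contributes $q^{-(-1)^{pd}\dim\Ext^1(M,\sigma^{-p}N)}$, again with $p>0$. Summing gives $q^{\langle M,N\rangle_o}$. For $d=1$ the same bookkeeping works, with both pieces sitting at the same $l$.

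\emph{Part (d), shifted formulas.} For $\{M,\Sigma^tN\}$ and $\{\Sigma^tM,N\}$ I would apply Lemma~\ref{lemma-derived-Hall-number-for-direct-sum}, whose correction factors are products of $|\Hom_\cc(M,\Sigma^{-l}N)|$ over small $l$, and evaluate these by (a).

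For $t=1$, the correction factor is $|\Hom_\cc(M,N)|=|\Hom_\ca(M,N)|$, raised to the exponent $-1$ overall, which gives the first line.

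For $2\le t\le d-1$, I would compute directly. The relevant $-l+t$ ranges over the values $pd$ and $pd+1$. Indexing by $p\le -1$ shifted by one gives a sum of $\langle M,\sigma^{-p}N\rangle$ in which the $\sigma$-exponent is off by one. By Lemma~\ref{lem:rotate-orbital-Euler-form} this is $\langle\sigma^{-1}M,N\rangle_o$ with sign $(-1)^{d-t}$, coming from $(-1)^{l}$ with $l=pd+t$ or $pd+t-1$.

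For $\{\Sigma^tM,N\}$, use the last identity of Lemma~\ref{lemma-derived-Hall-number-for-direct-sum}. The correction product $\prod_{l=1}^t|\Hom_\cc(M,\Sigma^{-l}N)|$ is trivial for $t\le d-2$, since $-l\notin\{pd,pd+1\}$ there. For $t=d-1$ it contains exactly $l=d-1$, i.e.\ $\Ext^1_\ca(M,\sigma^{-1}N)$.

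I expect the main obstacle to be the sign and index bookkeeping in (d). In particular, matching the exponents $(-1)^l$ against $(-1)^{dp}$ (including the $d=1$ case for the first formula), and identifying the shifted sum as $\langle\sigma^{-1}M,N\rangle_o$ rather than $\langle M,N\rangle_o$ plus boundary terms.
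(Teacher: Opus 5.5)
Your plan follows the paper's proof essentially step by step. Part (a) comes from the orbit-category formula plus heredity. Part (b) uses locality of $\End_\cc(\Sigma^jM)\cong\End_\ca(M)$, or $\End_\ca(M)\ltimes\Ext^1_\ca(M,\sigma^{-1}M)$ when $d=1$. Part (c) reads off which $\Hom$/$\Ext^1$ pieces sit in negative degrees; in fact no $p\le 0$ occurs there when $l>0$. Part (d) uses Lemma~\ref{lemma-derived-Hall-number-for-direct-sum} and Lemma~\ref{lem:rotate-orbital-Euler-form}. For $2\le t\le d-1$ you sum directly rather than use the third identity of Lemma~\ref{lemma-derived-Hall-number-for-direct-sum}, which is only cosmetic. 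For $d\geq 2$ your arguments are correct.

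The step that fails is your claim that for $d=1$ ``the same bookkeeping works'' for $\{M,N\}=q^{\langle M,N\rangle_o}$. For $d=1$, part (a) gives $\Hom_\cc(M,\Sigma^{-l}N)=\Hom_\ca(M,\sigma^{-l}N)\oplus\Ext^1_\ca(M,\sigma^{-l-1}N)$. So for $l>0$ the $\Ext^1$-pieces only involve $\sigma^{-p}$ with $p\geq 2$, exactly as in the computation for (c). The piece $\Ext^1_\ca(M,\sigma^{-1}N)$ lies in $\Hom_\cc(M,N)$, i.e.\ at $l=0$, which is excluded from $\{M,N\}$. What one actually gets for $d=1$ is
\[
\{M,N\}=q^{\langle M,N\rangle_o}\,|\Ext^1_\ca(M,\sigma^{-1}N)|^{-1},
\]
so the claimed identity fails whenever $\Ext^1_\ca(M,\sigma^{-1}N)\neq 0$.

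For a concrete counterexample, take $Q^l$ with $\sigma$ as in Section~\ref{s:quiver-generalised-zigzag-linear}, $d=1$ and $M=N=S_0$. Then $\{S_0,S_0\}=1$, whereas $\langle S_0,S_0\rangle_o=1$ by Lemma~\ref{lem:orbital-Euler-form-linear}. The paper's own proof has the same slip: its second equality, which uses the exponent $(-1)^{dp-1}$ on $\Ext^1_\ca(M,\sigma^{-p}N)$ for all $p\geq 1$, is valid only for $d\geq 2$. Since the formula is only used later under the standing assumption $d\geq 2$, you should restrict the first claim of (d) to $d\geq 2$, or state the corrected $d=1$ version above.
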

\begin{proof}
(a) We have
\begin{align*}
\Hom_\cc(M,\Sigma^{l}N)&=\bigoplus_{p\in\mathbb{Z}}\Hom_\cd(M,F^p(\Sigma^{l}N))=\bigoplus_{p\in\mathbb{Z}}\Hom_\cd(M,\Sigma^{pd+l}\circ\sigma^{-p}N)
\end{align*}
Since $\ca$ is hereditary, it follows that if $d=1$, this is
\begin{align*}
\Hom_\cd(M,\sigma^l N)\oplus \Hom_\cd(M,\Sigma\sigma^{l-1}N)=\Hom_\ca(M,\sigma^l N)\oplus \Ext^1_\ca(M,\sigma^{l-1}N);
\end{align*}
if $d\geq 2$, this is
\begin{align*}
&\hspace{15pt}\begin{cases} 0 & \text{ if } l\neq pd, pd+1 \text{ for any }p\in\mathbb{Z}\\
\Hom_\cd(M,\sigma^{p}N) & \text{ if } l=pd \text{ for some }p\in\mathbb{Z}\\
\Hom_\cd(M,\Sigma \sigma^{p}N) & \text{ if }l=pd+1 \text{ for some }p\in\mathbb{Z}
\end{cases}\\
&=\begin{cases} 0 & \text{ if } l\neq pd, pd+1 \text{ for any }p\in\mathbb{Z}\\
\Hom_\ca(M,\sigma^{p}N) & \text{ if } l=pd \text{ for some }p\in\mathbb{Z}\\
\Ext^1_\ca(M,\sigma^{p}N) & \text{ if }l=pd+1 \text{ for some }p\in\mathbb{Z}
\end{cases}.
\end{align*}

(b) That $\cc$ is Hom-finite follows from (a) and the fact that any object of $\cc$ is isomorphic to $M_0\oplus \Sigma M_1\oplus\ldots\oplus \Sigma^{d-1}M_{d-1}$ for some $M_0,M_1,\ldots,M_{d-1}\in\ca$. For $M\in\ca$ and $0\leq j\leq d-1$, by (a) we have
\[
\End_\cc(\Sigma^j M)\cong\End_\cc(M)=\begin{cases} \End_\ca(M)\ltimes \Ext^1_\ca(M,\sigma^{-1}M) & \text{if }d=1\\
\End_\ca(M) & \text{if }d\geq 2
\end{cases}.
\]
Therefore $\End_\cc(\Sigma^j M)$ is local if and only if $\End_\ca(M)$ is local. It follows that $\cc$ is Krull--Schmidt, and $\ind(\cc)=\sqcup_{j=0}^{d-1}\Sigma^{j}\ind(\ca)$.

(c) Since $\cc$ is Hom-finite, $\cc$ is left locally homologically finite if and only if $\bigoplus_{l\in\mathbb{Z}^+}\Hom_\cc(M,\Sigma^{-l}\Sigma^j N)$ is finite-dimensional for any $M,N\in\ca$ and $0\leq j\leq d-1$, if and only if $\bigoplus_{l\in\mathbb{Z}^+}\Hom_\cc(M,\Sigma^{-l}N)$ is finite-dimensional for any $M,N\in\ca$. By (a),
\begin{align*}
\bigoplus_{l\in\mathbb{Z}^+}\Hom_\cc(M,\Sigma^{-l}N)&=\begin{cases}\bigoplus_{p\geq 1}\Hom_\ca(M,\sigma^{-p}N)\oplus\bigoplus_{p\geq 1}\Ext^1_\ca(M,\sigma^{-p}N) & \text{if }d\geq 2\\
\bigoplus_{p\geq 1}\Hom_\ca(M,\sigma^{-p}N)\oplus\bigoplus_{p\geq 2}\Ext^1_\ca(M,\sigma^{-p}N) & \text{if }d=1.
\end{cases}
\end{align*}
The desired result follows.

(d) We have
\begin{align*}
\{M,N\}&=\Pi_{l>0}|\Hom_\cc(M,\Sigma^{-l}N)|^{(-1)^l}\\
&=\Pi_{p\in\mathbb{Z}^+}|\Hom_\mathcal{A}(M,\sigma^{-p}N)|^{(-1)^{dp}}\cdot|\Ext^1_\mathcal{A}(M,\sigma^{-p}N)|^{(-1)^{dp-1}}\\
&=\Pi_{p\in\mathbb{Z}^+}[\frac{|\Hom_\mathcal{A}(M,\sigma^{-p}N)|}{|\Ext^1_\mathcal{A}(M,\sigma^{-p}N)|}]^{(-1)^{dp}}=\Pi_{p\in\mathbb{Z}^+}[q^{\langle M,\sigma^{-p}N \rangle}]^{(-1)^{dp}}\\
&=q^{\langle M,N\rangle_o},
\end{align*}
where the second equality follows from (a). By Lemma~\ref{lemma-derived-Hall-number-for-direct-sum} and (a), we have
\begin{align*}
\{M,\Sigma^t N\}&=\{M,N\}^{(-1)^t}\cdot(\Pi_{l=-t+1}^0|\Hom_\cc(M,\Sigma^{-l}N)|^{(-1)^l})^{(-1)^t}\\
\end{align*}
Therefore, when $t=1$,
\[
\{M,\Sigma N\}=\{M,N\}^{-1}|\Hom_\cc(M,N)|^{-1}=\frac{1}{\{M,N\}|\Hom_\ca(M,N)|};
\]
when $2\leq t\leq d-1$,
\begin{align*}
\{M,\Sigma^t N\}&=\{M,N\}^{(-1)^t}(\frac{|\Hom_\cc(M,N)|}{|\Hom_\cc(M,\Sigma N)|})^{(-1)^t}\\
&=\{M,N\}^{(-1)^t}(\frac{|\Hom_\ca(M,N)|}{|\Ext^1_\ca(M,N)|})^{(-1)^t}\\
&=(q^{\langle M,N\rangle_o}q^{\langle M,N\rangle})^{(-1)^t}=(q^{\langle M,N\rangle_o+\langle M,N\rangle})^{(-1)^t}\\
&=q^{(-1)^{d-t}\langle\sigma^{-1}M,N\rangle_o},
\end{align*}
where the last equality follows from Lemma~\ref{lem:rotate-orbital-Euler-form}. Also
\begin{align*}
\{\Sigma^t M,N\}&=\{M,N\}^{(-1)^t}\cdot (\Pi_{l=1}^t |\Hom_\cc(M,\Sigma^{-l}N)|)^{(-1)^{l+t-1}}\\
&=\{M,N\}^{(-1)^t}|\Hom_\cc(M,\Sigma^{-d+1}N)|^{-\delta_{t,d-1}},\\
&=\{M,N\}^{(-1)^t}|\Ext^1_\ca(M,\sigma^{-1}N)|^{-\delta_{t,d-1}},
\end{align*}
as desired.
\end{proof}

If $d\geq 2$, then by Lemma~\ref{lem:when-is-root-category-left-homologically-finite} (a), the composition $\ca\to\cd\to\cc$ of the projection functor $\cd\to\cc$ with the embedding $\ca\to\cd$ is fully faithful. In this way, we consider $\ca$ as a subcategory of $\cc$.

\begin{lemma}
\label{lem:A-proper-abelian}
Assume $d\geq 2$.
Then $\ca$ is an extension-closed proper abelian subcategory of $\cc$.
\end{lemma}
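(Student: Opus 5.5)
We must show two things: that a sequence $0\to N\xrightarrow{f} L\xrightarrow{g} M\to 0$ in $\ca$ is short exact if and only if it completes to a triangle $N\to L\to M\to\Sigma N$ in $\cc$, and that $\ca$ is extension-closed in $\cc$. Throughout we use that $\ca$ sits in $\cc$ fully faithfully via $\ca\to\cd\to\cc$ (Lemma~\ref{lem:when-is-root-category-left-homologically-finite} (a) with $l=0$, $d\geq 2$). The plan is to transport everything from $\cd=\cd^b(\ca)$, where $\ca$ is the heart of the standard $t$-structure, through the triangle functor $\pi\colon\cd\to\cc$, and to control the remaining morphism spaces by Lemma~\ref{lem:when-is-root-category-left-homologically-finite} (a).

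\textbf{Step 1 (short exact sequences give triangles).} A short exact sequence in $\ca$ gives a triangle $N\to L\to M\xrightarrow{h}\Sigma N$ in $\cd$, and applying $\pi$ gives a triangle in $\cc$ with the same $f,g$.

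\textbf{Step 2 (triangles give short exact sequences).} Suppose $N\xrightarrow{f}L\xrightarrow{g}M\xrightarrow{h}\Sigma N$ is a triangle in $\cc$ with $L,M,N\in\ca$. By full faithfulness, $f$ and $g$ are morphisms in $\ca$. By Lemma~\ref{lem:when-is-root-category-left-homologically-finite} (a) with $l=1$ (and $d\geq 2$, so $1=pd+1$ only for $p=0$),
\[
\Hom_\cc(M,\Sigma N)=\Ext^1_\ca(M,N)=\Hom_\cd(M,\Sigma N).
\]
Hence $h=\pi(h')$ for some $h'$ in $\cd$. Complete $h'$ to a triangle in $\cd$:
\[
N\xrightarrow{f'} E\xrightarrow{g'} M\xrightarrow{h'}\Sigma N .
\]
Since $\ca$ is the heart in $\cd$, $E\in\ca$ and $0\to N\to E\to M\to 0$ is short exact. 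Its image under $\pi$ is a triangle in $\cc$ with the same third morphism $h$. By the uniqueness of cones up to isomorphism (TR3 together with the five lemma), there is an isomorphism $E\cong L$ in $\cc$ compatible with $f,f'$ and $g,g'$. By full faithfulness this isomorphism lies in $\ca$, so $0\to N\xrightarrow{f}L\xrightarrow{g}M\to 0$ is isomorphic to the exact sequence above and is therefore exact.

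\textbf{Step 3 (extension-closedness).} Given a triangle $N\to Z\to M\xrightarrow{h}\Sigma N$ in $\cc$ with $M,N\in\ca$, the same lifting argument applies. Lift $h$ to $h'\in\Hom_\cd(M,\Sigma N)$; the cone $E$ of $h'$ in $\cd$ lies in $\ca$. Uniqueness of cones in $\cc$ then gives $Z\cong E\in\ca$.

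The main subtlety is Step 2: a priori $\cc$ has more morphisms $M\to\Sigma N$ than $\cd$, coming from components $\Hom_\cd(M,F^p\Sigma N)$ with $p\neq 0$. Using (a) to see that for $d\geq 2$ only the $p=0$ component survives is exactly where the hypothesis $d\geq 2$ enters; for $d=1$ the extra summand $\Hom_\ca(M,\sigma N)$ would break the argument. A secondary point is that $\ca$ is abelian and additive in $\cc$, but this is automatic since $\ca$ is a full subcategory equivalent to the abelian category $\ca$.
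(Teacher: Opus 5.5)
Your proof is correct and follows the paper's argument: you use Lemma~\ref{lem:when-is-root-category-left-homologically-finite} (a) to identify $\Hom_\cc(M,\Sigma N)$ with $\Ext^1_\ca(M,N)$, lift the third morphism of the triangle to $\cd$, and conclude by uniqueness of cones. You are somewhat more explicit than the paper in checking, via TR3, the five lemma and full faithfulness, that the resulting isomorphism $L\cong E$ is compatible with $f,f'$ and $g,g'$; the paper leaves this implicit in the phrase ``this triangle has to be isomorphic to the given triangle''.
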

\begin{proof}
Let $M,N\in\ca$.
By Lemma~\ref{lem:when-is-root-category-left-homologically-finite} (a), we have
\begin{align*}
\Hom_\mathcal{C}(M,\Sigma N)=\Ext^1_\mathcal{A}(M,N).
\end{align*}
If there is a short exact sequence $\xymatrix@C=0.7pc{0\ar[r] & N\ar[r]^{f} & L\ar[r]^{g} & M\ar[r] & 0}$ in $\ca$, then $L\in\ca$ and this short exact sequence extends to a triangle $\xymatrix@C=0.7pc{N\ar[r]^{f} & L\ar[r]^{g} & M\ar[r] & \Sigma N}$ in $\cd$, and further induces a triangle $\xymatrix@C=0.7pc{N\ar[r]^{f} & L\ar[r]^{g} & M\ar[r] & \Sigma N}$ in $\cc$. Conversely,
for $L\in\cc$, assume that there is a triangle $\xymatrix@C=0.7pc{N\ar[r]^f & L\ar[r]^g & M\ar[r]^h & \Sigma N}$ in $\cc$. Then $h\in\Ext^1_\ca(M,N)$, and thus there is a short exact sequence $\xymatrix@C=0.7pc{0\ar[r] & N\ar[r]^{f'} & L'\ar[r]^{g'} & M\ar[r] & 0}$ in $\ca$ and a triangle $\xymatrix@C=0.7pc{N\ar[r]^{f'} & L'\ar[r]^{g'} & M\ar[r]^h & \Sigma N}$ in $\cd$, which induces a triangle $\xymatrix@C=0.7pc{N\ar[r]^{f'} & L'\ar[r]^{g'} & M\ar[r]^h & \Sigma N}$ in $\cc$. This triangle has to be isomorphic to the given triangle. This shows that $\ca$ is an extension-closed proper abelian subcategory.
\end{proof}

\subsection{The derived Hall algebra of a twisted root category: relations}
\label{ss:Hall-algebra-of-twisted-root-category-relations}

Let $\ca,\cd,\sigma,d,\cc$ be as in Section~\ref{ss:twisted-root-category}.
In this subsection and the next subsection, we study the derived Hall algebra $\ch(\cc)$ of $\cc$. Assume that $\sigma$ satisfies the condition (HF) in Section~\ref{ss:orbit-Euler-form}. In this case, $\cc$ is left locally homologically finite by Lemma~\ref{lem:when-is-root-category-left-homologically-finite} (c), and $\ch(\cc)$ is well-defined. 

Assume $d\geq 2$. We first compute the products of shifts of objects of $\ca$.

\begin{proposition}
\label{prop:Hall-subalgebra-in-root-category}
Let $M,N\in\ca$.
\begin{itemize}
\item[(a)] $[M][N]=q^{\langle N,M\rangle_o}[M]\diamond[N]$.
\item[(b)] $[\Sigma M][N]=\sum_{[C],[K]\in\cp(\ca)}F_{\Sigma M,N}^{C\oplus \Sigma K}[C\oplus \Sigma K]$. Moreover,
\begin{align*}
F_{\Sigma M,N}^{C\oplus \Sigma K}&=|{}_K\Hom_\ca(M,N)_C|\cdot\frac{|\Ext^1_\ca(C,K)|}{|\Hom_\ca(C,K)|}\cdot\frac{|\Aut_\ca(C)|\cdot |\Aut_\ca(K)|}{|\Aut_\ca(M)|\cdot |\Aut_\ca(N)|}\\
&\hspace{15pt}\cdot\frac{\{M,N\}}{\{C,K\}\{K,C\}}\cdot \frac{\{C,C\}\{K,K\}}{\{M,M\}\{N,N\}},
\end{align*}
where ${}_K\Hom_\ca(M,N)_C$ is the set of morphisms from $M$ to $N$ whose kernel is isomorphic to $K$ and whose cokernel is isomorphic to $C$. In particular,
\begin{align*}
F_{\Sigma M,N}^{N\oplus\Sigma M}&=q^{(-1)^{d-1}\langle \sigma^{-1}N,M\rangle_o},\\
F_{\Sigma M,M}^0&=\frac{1}{|\Aut_\ca(M)|q^{\langle M,M\rangle_o}}.
\end{align*}
\item[(c)] For $2\leq t\leq d-1$, we have
\[
[\Sigma^t M][N]=q^{(-1)^{{d-t}}\langle \sigma^{-1}N,M\rangle_o}[N\oplus \Sigma^t M].
\]
\end{itemize}
\end{proposition}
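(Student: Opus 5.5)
Part (a) follows at once from earlier results. By Lemma~\ref{lem:A-proper-abelian}, $\ca$ is an extension-closed proper abelian subcategory of $\cc$. Hence Lemma~\ref{lem:derived-Hall-number-for-proper-abelian-subcategory} gives $[M][N]=\{N,M\}[M]\diamond[N]$. Lemma~\ref{lem:when-is-root-category-left-homologically-finite}(d) gives $\{N,M\}=q^{\langle N,M\rangle_o}$, which proves (a).

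For (b) and (c) I will use the derived Riedtmann--Peng formula. This requires understanding $\Hom_\cc(\Sigma^{-1}\Sigma^tM,N)=\Hom_\cc(M,\Sigma^{1-t}N)$ and the cones of such morphisms. Lemma~\ref{lem:when-is-root-category-left-homologically-finite}(a) (with $d\geq 2$) controls these spaces. For $l=1-t$ with $2\le t\le d-1$ we have $l\in[2-d,-1]$, which is neither of the form $pd$ nor $pd+1$. So $\Hom_\cc(\Sigma^{t-1}M,N)=0$, the only morphism is $0$, and the only possible middle term is $Z=N\oplus\Sigma^tM$.

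For (c) I then evaluate the formula with $X=\Sigma^tM$, $Y=N$, $Z=N\oplus\Sigma^tM$. The factor $|\Hom(\Sigma^{-1}X,Y)_Z|$ equals $1$. The automorphism factor $|\Aut_\cc(Z)|/(|\Aut_\cc(X)||\Aut_\cc(Y)|)$ equals $|\Hom_\cc(N,\Sigma^tM)|\cdot|\Hom_\cc(\Sigma^tM,N)|$. By (a) of the lemma both are $0$ when $2\le t\le d-1$ (for the second, $-t\in[1-d,-2]$), so this factor is $1$. Expanding $\{Z,Z\}$ multiplicatively via Lemma~\ref{lemma-derived-Hall-number-for-direct-sum} leaves the cross terms $\{N,\Sigma^tM\}\{\Sigma^tM,N\}$. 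Together with $\{\Sigma^{t-1}M,N\}$ from the formula, everything reduces via Lemma~\ref{lem:when-is-root-category-left-homologically-finite}(d) and Lemma~\ref{lem:rotate-orbital-Euler-form} to $q^{(-1)^{d-t}\langle\sigma^{-1}N,M\rangle_o}$. The main risk is sign bookkeeping at the endpoints $t=2$ and $t=d-1$, where (d) has special cases: $\{\Sigma^{t-1}M,N\}$ with $t-1=1$, and $\{\Sigma^{d-1}M,N\}$, which picks up $|\Ext^1_\ca(M,\sigma^{-1}N)|$. I expect the $\Ext^1$ correction to cancel against a matching Hom factor. I will check these directly from the definition rather than trust the lemma's case split.

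For (b), $\Hom_\cc(M,N)=\Hom_\ca(M,N)$. For $f\colon M\to N$ with kernel $K$ and cokernel $C$, the octahedral axiom in $\cd$ (the cone of $f$ in $\cd$ has cohomology $K$ in degree $-1$ and $C$ in degree $0$, and splits since $\ca$ is hereditary) shows the cone is $C\oplus\Sigma K$. Projecting to $\cc$ gives the first sum and the count $|{}_K\Hom_\ca(M,N)_C|$. I then substitute into the derived Riedtmann--Peng formula. For $Z=C\oplus\Sigma K$, $|\Aut_\cc(Z)|=|\Aut_\ca(C)||\Aut_\ca(K)||\Hom_\cc(C,\Sigma K)||\Hom_\cc(\Sigma K,C)|$. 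Here $\Hom_\cc(C,\Sigma K)=\Ext^1_\ca(C,K)$ and $\Hom_\cc(\Sigma K,C)=\Hom_\cc(K,\Sigma^{-1}C)$, the latter being $0$ since $d\ge2$. Expanding $\{Z,Z\}$ and $\{X,X\}$ via Lemma~\ref{lemma-derived-Hall-number-for-direct-sum} and Lemma~\ref{lem:when-is-root-category-left-homologically-finite}(d) (with $t=1$) produces the factor $|\Hom_\ca(C,K)|^{-1}$ and the stated quotient of braces. Note that $\{\Sigma K,\Sigma K\}=\{K,K\}$ since $\Sigma$ is an automorphism.

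The two special cases follow by specializing. For $f=0$ we get $C=N$, $K=M$. For the second case, $N=M$ and $Z=0$ means $f$ is an isomorphism, giving $|\Aut_\ca(M)|$ choices. Simplifying with Lemma~\ref{lem:rotate-orbital-Euler-form} then gives the stated closed forms.
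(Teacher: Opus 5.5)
Part (a) and the overall structure of (b) follow the paper's argument. Using the derived Riedtmann--Peng formula in (c), where the paper uses To\"{e}n's formula, is also a workable route. As written, though, your bookkeeping rests on two false vanishing claims, and they are exactly where your anticipated cancellation has to come from.

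\textbf{Part (c).} You assert $\Hom_\cc(\Sigma^tM,N)=0$ for all $2\le t\le d-1$. This fails at $t=d-1$: $\Hom_\cc(\Sigma^{d-1}M,N)\cong\Hom_\cc(M,\Sigma^{1-d}N)$ and $1-d=(-1)d+1$, so by Lemma~\ref{lem:when-is-root-category-left-homologically-finite}(a) it equals $\Ext^1_\ca(M,\sigma^{-1}N)$. Since $\Hom_\cc(N,\Sigma^tM)=0$, the ring $\End_\cc(N\oplus\Sigma^tM)$ is triangular. Hence your automorphism factor is $|\Hom_\cc(\Sigma^tM,N)|$, which is $1$ for $t\le d-2$ but $|\Ext^1_\ca(M,\sigma^{-1}N)|$ for $t=d-1$. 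The formula you are evaluating is
\[
F^{N\oplus\Sigma^tM}_{\Sigma^tM,N}=\{\Sigma^{t-1}M,N\}\cdot|\Hom_\cc(\Sigma^tM,N)|\cdot\{N,\Sigma^tM\}\cdot\{\Sigma^tM,N\}.
\]
Lemma~\ref{lem:when-is-root-category-left-homologically-finite}(d) gives $\{\Sigma^{t-1}M,N\}\{\Sigma^tM,N\}=1$ for $t\le d-2$, and $|\Ext^1_\ca(M,\sigma^{-1}N)|^{-1}$ for $t=d-1$. So the ``matching Hom factor'' you hope for is precisely the automorphism factor you set to $1$. With your value, the answer at $t=d-1$ is off by $|\Ext^1_\ca(M,\sigma^{-1}N)|^{-1}$. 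With the correct value, everything collapses to $\{N,\Sigma^tM\}=q^{(-1)^{d-t}\langle\sigma^{-1}N,M\rangle_o}$. The endpoint $t=2$ is harmless: there $d\ge3$ and $\{\Sigma M,N\}=\{M,N\}^{-1}$.

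The paper's route avoids this entirely. In To\"{e}n's formula, $\Hom_\cc(N,N\oplus\Sigma^tM)_{\Sigma^tM}=\Aut_\cc(N)$ because $\Hom_\cc(N,\Sigma^tM)=0$. Hence $F=\{N,N\oplus\Sigma^tM\}/\{N,N\}=\{N,\Sigma^tM\}$ immediately, with no automorphism groups and no $\{\Sigma^tM,N\}$ to compute.

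\textbf{Part (b).} The same slip occurs here. $\Hom_\cc(\Sigma K,C)\cong\Hom_\cc(K,\Sigma^{-1}C)$ vanishes for $d\ge3$. For $d=2$, however, $-1=(-1)\cdot 2+1$, so it equals $\Ext^1_\ca(K,\sigma^{-1}C)$, which is in general nonzero. The paper keeps $|\Hom_\cc(\Sigma K,C)|$ in $|\Aut_\cc(C\oplus\Sigma K)|$. It then uses Lemma~\ref{lemma-derived-Hall-number-for-direct-sum} to write $\{\Sigma K,C\}=\{K,C\}^{-1}|\Hom_\cc(K,\Sigma^{-1}C)|^{-1}$, and the two factors cancel for every $d\ge2$. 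Suppose instead you drop the factor from $\Aut$ but evaluate $\{\Sigma K,C\}$ by Lemma~\ref{lem:when-is-root-category-left-homologically-finite}(d) as you propose (for $d=2$ this is the case $t=1=d-1$). Then you are left with a stray $|\Ext^1_\ca(K,\sigma^{-1}C)|^{-1}$.

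Once both Hom spaces are given their correct values, your argument goes through and yields the stated formulas.
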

\begin{proof}
(a) follows immediately from Lemma~\ref{lem:derived-Hall-number-for-proper-abelian-subcategory} and Lemma~\ref{lem:when-is-root-category-left-homologically-finite} (d).

(b)
First, $\Hom_\cc(\Sigma M,\Sigma N)\cong\Hom_\cc(M,N)=\Hom_\ca(M,N)$ by Lemma~\ref{lem:when-is-root-category-left-homologically-finite}. For a morphism $f\in\Hom_\ca(M,N)$, it induces a triangle in $\cd$ and hence in $\cc$
\[
\xymatrix{
M\ar[r]^f & N\ar[r] & \cok(f)\oplus \Sigma\ker(f)\ar[r] & \Sigma M,
}
\]
showing that the cone of $f$ in $\cc$ is $X\cong \cok(f)\oplus\Sigma\ker(f)$. This implies the equality on $[\Sigma M][N]$. For the equality on $F_{\Sigma M,N}^{C\oplus \Sigma K}$:
\begin{align*}
F_{\Sigma M,N}^{C\oplus \Sigma K}&=|\Hom_\cc(M,N)_{C\oplus\Sigma K}|\cdot\{M,N\}\cdot \frac{|\Aut_\cc(C\oplus\Sigma K)|}{|\Aut_\cc(M)|\cdot|\Aut_\cc(N)|}\cdot\frac{\{C\oplus\Sigma K,C\oplus\Sigma K\}}{\{M,M\}\{N,N\}}\\
&=|{}_K\Hom_\ca(M,N)_C|\cdot\{M,N\}\cdot \frac{|\Aut_\cc(C\oplus\Sigma K)|}{|\Aut_\cc(M)|\cdot|\Aut_\cc(N)|}\cdot\frac{\{C\oplus\Sigma K,C\oplus\Sigma K\}}{\{M,M\}\{N,N\}}\\
&=|{}_K\Hom_\ca(M,N)_C|\cdot\{M,N\}\cdot \frac{|\Aut_\cc(C\oplus\Sigma K)|}{|\Aut_\ca(M)|\cdot|\Aut_\ca(N)|}\\
&\hspace{15pt}\cdot\frac{\{C,C\}\{C,\Sigma K\}\{\Sigma K,C\}\{\Sigma K,\Sigma K\}}{\{M,M\}\{N,N\}}\\
&=|{}_K\Hom_\ca(M,N)_C|\cdot\{M,N\}\cdot \frac{|\Aut_\cc(C\oplus\Sigma K)|}{|\Aut_\ca(M)|\cdot|\Aut_\ca(N)|}\\
&\hspace{15pt}\cdot\frac{\{C,C\}\{K,K\}}{\{M,M\}\{C,K\}|\Hom_\cc(C,K)|\{K,C\}|\Hom_\cc(K,\Sigma^{-1} C)|\{N,N\}},
\end{align*}
where first equality is the derived Riedtmann--Peng formula, the second equality holds because of the equality $\Hom_\cc(M,N)_{C\oplus \Sigma K}={}_K\Hom_\ca(M,N)_C$,
the third and the fourth equalities follow by Lemma~\ref{lemma-derived-Hall-number-for-direct-sum}.
Moreover, since indecomposable direct summands of $\Sigma K$ and indecomposable direct summands of $C$ are not isomorphic, we have
\begin{align*}
\Aut(C\oplus\Sigma K)&=\begin{pmatrix} \Aut_\cc(C) & \Hom_\cc(\Sigma K,C)\\ \Hom_\cc(C,\Sigma K) & \Aut_\cc(\Sigma K) \end{pmatrix},\\
|\Aut_\cc(C\oplus\Sigma K)|&=|\Aut_\cc(C)|\cdot|\Hom_\cc(\Sigma K,C)|\cdot|\Hom_\cc(C,\Sigma K)|\cdot|\Aut_\cc(K)|\\
&=|\Aut_\ca(C)|\cdot|\Hom_\cc(\Sigma K,C)|\cdot|\Hom_\cc(C,\Sigma K)|\cdot|\Aut_\ca(K)|.
\end{align*}
It follows that
\begin{align*}
F_{\Sigma M,N}^{C\oplus \Sigma K}&=|{}_K\Hom_\ca(M,N)_C|\cdot\frac{|\Hom_\cc(C,\Sigma K)|}{|\Hom_\cc(C,K)|}\cdot\frac{|\Aut_\ca(C)|\cdot |\Aut_\ca(K)|}{|\Aut_\ca(M)|\cdot |\Aut_\ca(N)|}\\
&\hspace{15pt}\cdot\frac{\{M,N\}}{\{C,K\}\{K,C\}}\cdot \frac{\{C,C\}\{K,K\}}{\{M,M\}\{N,N\}}\\
&=|{}_K\Hom_\ca(M,N)_C|\cdot\frac{|\Ext^1_\ca(C,K)|}{|\Hom_\ca(C,K)|}\cdot\frac{|\Aut_\ca(C)|\cdot |\Aut_\ca(K)|}{|\Aut_\ca(M)|\cdot |\Aut_\ca(N)|}\\
&\hspace{15pt}\cdot\frac{\{M,N\}}{\{C,K\}\{K,C\}}\cdot \frac{\{C,C\}\{K,K\}}{\{M,M\}\{N,N\}}.
\end{align*}

For the equality on $F_{\Sigma M,N}^{N\oplus\Sigma M}$: putting $C=N$ and $K=M$ in the formula for $F_{\Sigma M,N}^{C\oplus \Sigma K}$, we obtain
\begin{align*}
F_{\Sigma M,N}^{N\oplus \Sigma M}&=|{}_M\Hom_\ca(M,N)_N|\cdot\frac{|\Ext^1_\ca(N,M)|}{|\Hom_\ca(N,M)|}\cdot\frac{|\Aut_\ca(N)|\cdot |\Aut_\ca(M)|}{|\Aut_\ca(M)|\cdot |\Aut_\ca(N)|}\\
&\hspace{15pt}\cdot\frac{\{M,N\}}{\{N,M\}\{M,N\}}\cdot \frac{\{N,N\}\{M,M\}}{\{M,M\}\{N,N\}},\\
&=\frac{|\Ext^1_\ca(N,M)|}{\{N,M\}\cdot |\Hom_\ca(N,M)|}=\frac{1}{q^{\langle N,M\rangle_o}q^{\langle N,M\rangle}}\\
&=\frac{1}{q^{(-1)^d \langle \sigma^{-1}N,M\rangle_o}}=q^{(-1)^{d-1} \langle \sigma^{-1}N,M\rangle_o}.
\end{align*}
Here the second equality follows from the fact ${}_M\Hom_\ca(M,N)_N=0$, the third equality follows from Lemma~\ref{lem:when-is-root-category-left-homologically-finite} (d), and the fourth equality follows from Lemma~\ref{lem:rotate-orbital-Euler-form}.
For the equality on $F_{\Sigma M,M}^0$: putting $N=M$ and $C=K=0$ in the formula for $F_{\Sigma M,N}^{C\oplus\Sigma K}$ and replacing the terms in which $0$ appears by $1$, we obtain
\begin{align*}
F_{\Sigma M,M}^0&=|{}_0\Hom_\ca(M,M)_0|\cdot\frac{1}{1}\cdot\frac{1}{|\Aut_\ca(M)|\cdot|\Aut_\ca(M)|}\cdot\frac{\{M,M\}}{1}\cdot \frac{1}{\{M,M\}\cdot\{M,M\}}\\
&=\frac{1}{|\Aut_\ca(M)|\{M,M\}}=\frac{1}{|\Aut_\ca(M)|q^{\langle M,M\rangle_o}}.
\end{align*}
Here the second equality follows from the fact ${}_0\Hom_\ca(M,M)_0=\Aut_\ca(M)$.

(c) Since $\Hom_\cc(\Sigma^t M,\Sigma N)\cong\Hom_\cc(M,\Sigma^{-t+1}N)=0$ by Lemma~\ref{lem:when-is-root-category-left-homologically-finite} (a), we have
\[
[\Sigma^t M][N]=F_{\Sigma^t M,N}^{N\oplus\Sigma^t M}[N\oplus \Sigma^t M].
\]
Now
\begin{align*}
F_{\Sigma^t M,N}^{N\oplus\Sigma^t M}&=\frac{|\Hom_\cc(N,N\oplus \Sigma^t M)_{\Sigma^t M}|}{|\Aut_\cc(N)|}\cdot\frac{\{N,N\oplus\Sigma^t M\}}{\{N,N\}}\\
&=\frac{|\Aut_\cc(N)|}{|\Aut_\cc(N)|}\cdot\{N,\Sigma^t M\}=q^{(-1)^{d-t}\langle\sigma^{-1}N,M\rangle_o}.
\end{align*}
Here the second equality follows from Lemma~\ref{lemma-derived-Hall-number-for-direct-sum} and the fact $\Hom_\cc(N,\Sigma^t M)=0$, and the last equality follows from Lemma~\ref{lem:when-is-root-category-left-homologically-finite} (d).
\end{proof}

We deduce the following useful corollaries.

\begin{corollary}
\label{cor:commutator-for-neighbouring-pieces-simples}
Let $S,S'$ be simple objects of $\ca$. Then
\[
[\Sigma S][S']=\begin{cases} q^{(-1)^{d-1}\langle\sigma^{-1}S',S\rangle_o}[S'\oplus\Sigma S]+\frac{1}{|\Aut(S)|q^{\langle S,S\rangle_o}}[0] & \text{if }S'\cong S\\
q^{(-1)^{d-1}\langle\sigma^{-1}S',S\rangle_o}[S'\oplus\Sigma S] & \text{if }S'\not\cong S
\end{cases}.
\]
If $d=2$, then
\[
[\Sigma S][S']-q^{(S,S')_o-\langle S',S\rangle}[S'][\Sigma S]=
\begin{cases} \frac{1}{|\Aut(S)|q^{\langle S,S\rangle_o}}[0] & \text{if }S'\cong S\\[5pt]
-\frac{q^{\langle S,\sigma S\rangle_o-2\langle S,S\rangle_o}}{|\Aut(S)|}[0] & \text{if }S'\cong\sigma S\\
0 &\text{otherwise}
\end{cases};
\]
if $d>2$, then
\[
[\Sigma S][S']-q^{(S,S')_o-\langle S',S\rangle}[S'][\Sigma S]=
\begin{cases} \frac{1}{|\Aut(S)|q^{\langle S,S\rangle_o}}[0] & \text{if }S'\cong S\\
0 &\text{otherwise}
\end{cases}.
\]
\end{corollary}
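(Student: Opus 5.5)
The plan is to read off the first formula from Proposition~\ref{prop:Hall-subalgebra-in-root-category}~(b), and then to compute the reversed product $[S'][\Sigma S]$ by moving it, via the shift automorphism $\Sigma$ of $\ch(\cc)$, into one of the forms already treated in that proposition.

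\emph{First formula.} Apply Proposition~\ref{prop:Hall-subalgebra-in-root-category}~(b) with $M=S$ and $N=S'$. Since $S$ and $S'$ are simple, a morphism $S\to S'$ is either $0$ or an isomorphism, and the latter occurs only when $S'\cong S$.
\begin{itemize}
\item The zero map has kernel $S$ and cokernel $S'$, so it gives the term $[S'\oplus\Sigma S]$. Its coefficient is the stated $F_{\Sigma M,N}^{N\oplus\Sigma M}=q^{(-1)^{d-1}\langle\sigma^{-1}S',S\rangle_o}$.
\item An isomorphism gives $C=K=0$, hence the term $[0]$, with coefficient $F^0_{\Sigma S,S}=1/(|\Aut(S)|q^{\langle S,S\rangle_o})$.
\end{itemize}
This proves the first display.

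\emph{Rewriting $[S'][\Sigma S]$.} In $\cc$ we have $\Sigma^d X\cong\sigma X$ on objects, so $\Sigma^{-1}S'\cong\Sigma^{d-1}\sigma^{-1}S'$. Since $\Sigma$ is an algebra automorphism of $\ch(\cc)$, this gives
\[
[S'][\Sigma S]=\Sigma\bigl([\Sigma^{-1}S'][S]\bigr)=\Sigma\bigl([\Sigma^{d-1}\sigma^{-1}S'][S]\bigr).
\]
Here $\sigma^{-1}S'$ is again a simple object of $\ca$ with $\Aut(\sigma^{-1}S')\cong\Aut(S')$. The orbital Euler form is $\sigma$-invariant, $\langle \sigma X,\sigma Y\rangle_o=\langle X,Y\rangle_o$, because $\sigma$ is an automorphism.

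\emph{Case $d>2$.} Then $2\le d-1\le d-1$, so Proposition~\ref{prop:Hall-subalgebra-in-root-category}~(c) with $t=d-1$, $M=\sigma^{-1}S'$, $N=S$ gives
\[
[\Sigma^{d-1}\sigma^{-1}S'][S]=q^{-\langle \sigma^{-1}S,\sigma^{-1}S'\rangle_o}[S\oplus\Sigma^{d-1}\sigma^{-1}S'].
\]
Applying $\Sigma$ and using $\sigma$-invariance yields $[S'][\Sigma S]=q^{-\langle S,S'\rangle_o}[S'\oplus\Sigma S]$. Multiplying by $q^{(S,S')_o-\langle S',S\rangle}$, the exponent becomes $-\langle S',S\rangle_o-\langle S',S\rangle$. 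By Lemma~\ref{lem:rotate-orbital-Euler-form} this equals $(-1)^{d-1}\langle\sigma^{-1}S',S\rangle_o$, which is exactly the coefficient of $[S'\oplus\Sigma S]$ in $[\Sigma S][S']$. Subtracting, the split terms cancel and only the $[0]$-term from the first formula survives.

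\emph{Case $d=2$.} Now $t=d-1=1$, so instead I use Proposition~\ref{prop:Hall-subalgebra-in-root-category}~(b), equivalently the first formula just proved, with $S$ replaced by $\sigma^{-1}S'$ and $S'$ replaced by $S$.
\begin{itemize}
\item The split term has coefficient $q^{-\langle\sigma^{-1}S,\sigma^{-1}S'\rangle_o}=q^{-\langle S,S'\rangle_o}$. It cancels exactly as in the case $d>2$.
\item There is an extra term $[0]$ precisely when $\sigma^{-1}S'\cong S$, i.e.\ $S'\cong\sigma S$. Its coefficient is $1/(|\Aut(S)|q^{\langle S,S\rangle_o})$, and $\Sigma$ fixes $[0]$.
\end{itemize}
After multiplication by $q^{(S,S')_o-\langle S',S\rangle}$ and a sign change (it is subtracted), the $[0]$-coefficient in the case $S'\cong\sigma S$ is
\[
-\frac{q^{(S,\sigma S)_o-\langle\sigma S,S\rangle-\langle S,S\rangle_o}}{|\Aut(S)|}.
\]
The main (though routine) obstacle is simplifying this exponent to $\langle S,\sigma S\rangle_o-2\langle S,S\rangle_o$. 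Write
\[
(S,\sigma S)_o=\langle S,\sigma S\rangle_o-\langle \sigma S,S\rangle_o .
\]
By $\sigma$-invariance and then Lemma~\ref{lem:rotate-orbital-Euler-form} with $d$ even (so all signs are $+$),
\[
\langle\sigma S,S\rangle_o=\langle S,\sigma^{-1}S\rangle_o=\langle\sigma S, S\rangle+\ldots ,
\]
and one checks that $\langle\sigma S,S\rangle_o+\langle\sigma S,S\rangle=\langle S,S\rangle_o$. Substituting gives the stated exponent.

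Finally, if $S'\cong S$ and $S'\cong\sigma S$ held simultaneously, the two $[0]$-contributions would add. I will note that this does not occur in the intended setting, since $\sigma$ has infinite order and acts freely on the simples of the relevant quivers; otherwise one simply sums the two contributions.
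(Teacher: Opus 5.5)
Your proposal is correct and follows the paper's proof essentially step for step. You read the first formula off Proposition~\ref{prop:Hall-subalgebra-in-root-category}~(b), rewrite $[S'][\Sigma S]=\Sigma([\Sigma^{d-1}\sigma^{-1}S'][S])$, evaluate it via Proposition~\ref{prop:Hall-subalgebra-in-root-category}~(c) for $d>2$ and via the first formula for $d=2$, and reconcile the exponents with Lemma~\ref{lem:rotate-orbital-Euler-form}. Two small cleanups: the identity $\langle\sigma S,S\rangle_o+\langle\sigma S,S\rangle=\langle S,S\rangle_o$ is simply the first equality of Lemma~\ref{lem:rotate-orbital-Euler-form} with $M=\sigma S$, $N=S$ and $d$ even, so your muddled intermediate display $\langle S,\sigma^{-1}S\rangle_o=\langle\sigma S,S\rangle+\ldots$ can be dropped; and the overlap $S'\cong S\cong\sigma S$ is excluded in general by the standing assumption (HF), since $\sigma S\cong S$ would force $\Hom_\ca(S,\sigma^{-p}S)\neq 0$ for every $p>0$.
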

\begin{proof}
The first equality follows from Proposition~\ref{prop:Hall-subalgebra-in-root-category} (b), observing that ${}_K\Hom_\ca(S,S')_C$ is empty except in the following three cases:
\begin{itemize}
\item[-] $S\cong S'$, $K=S$ and $C=S$;
\item[-] $S\cong S'$, $K=0$ and $C=0$;
\item[-] $S\not\cong S'$, $K=S$ and $C=S'$.
\end{itemize}

Next assume $d=2$. By applying the automorphism $\Sigma$ of $\ch(\cc)$ to
\begin{align*}
[\Sigma \sigma^{-1}S'][S]&=\begin{cases} q^{-\langle\sigma^{-1}S,\sigma^{-1} S'\rangle_o}[S\oplus\Sigma \sigma^{-1} S']+\frac{1}{|\Aut(S)|q^{\langle S,S\rangle_o}}[0] & \text{if }S'\cong \sigma S\\
q^{-\langle\sigma^{-1}S,\sigma^{-1}S'\rangle_o}[S\oplus\Sigma \sigma^{-1}S'] & \text{if }S'\not\cong \sigma S
\end{cases}\\
&=\begin{cases} q^{-\langle S,S'\rangle_o}[S\oplus\Sigma \sigma^{-1}S']+\frac{1}{|\Aut(S)|q^{\langle S,S\rangle_o}}[0] & \text{if }S'\cong \sigma S\\
q^{-\langle S,S'\rangle_o}[S\oplus\Sigma \sigma^{-1}S'] & \text{if }S'\not\cong \sigma S
\end{cases},
\end{align*}
we obtain
\begin{align*}
[S'][\Sigma S]&=[\Sigma^2 \sigma^{-1}S'][\Sigma S]=
\begin{cases} q^{-\langle S,S'\rangle_o}[S'\oplus\Sigma S]+\frac{1}{|\Aut(S)|q^{\langle S,S\rangle_o}}[0] & \text{if }S'\cong \sigma S\\
q^{-\langle S,S'\rangle_o}[S'\oplus\Sigma S] & \text{if }S'\not\cong \sigma S
\end{cases}.
\end{align*}
Therefore
\[
[\Sigma S][S']-q^{-\langle \sigma^{-1}S',S\rangle_o+\langle S,S'\rangle_o}[S'][\Sigma S]=
\begin{cases} \frac{1}{|\Aut(S)|q^{\langle S,S\rangle_o}}[0] & \text{if }S'\cong S\\[5pt]
-\frac{q^{\langle S,\sigma S\rangle_o-2\langle S,S\rangle_o}}{|\Aut(S)|}[0]& \text{if }S'\cong\sigma S\\
0 &\text{otherwise}
\end{cases}.
\]
The second equality then follows from Lemma~\ref{lem:rotate-orbital-Euler-form}.

Finally assume $d>2$. By Proposition~\ref{prop:Hall-subalgebra-in-root-category} (c), we have
\[
[\Sigma^{d-1}\sigma^{-1}S'][S]=q^{-\langle \sigma^{-1}S,\sigma^{-1}S'\rangle_o}[S\oplus\Sigma^{d-1} \sigma^{-1}S']=q^{-\langle S,S'\rangle_o}[S\oplus\Sigma^{d-1} \sigma^{-1}S'].
\]
By applying the automorphism $\Sigma$ of $\ch(\cc)$ to it, we obtain
\[
[S'][\Sigma S]=[\Sigma^{d}\sigma^{-1}S'][\Sigma S]=q^{-\langle S,S'\rangle_o}[S'\oplus\Sigma S].
\]
Therefore,
\[
[\Sigma S][S']-q^{(-1)^{d-1}\langle\sigma^{-1}S',S\rangle_o+\langle S,S'\rangle_o}[S'][\Sigma S]=
\begin{cases} \frac{1}{|\Aut(S)|q^{\langle S,S\rangle_o}}[0] & \text{if }S'\cong S\\
0 &\text{otherwise}
\end{cases}.
\]
The third equality then follows from Lemma~\ref{lem:rotate-orbital-Euler-form}.
\end{proof}

\begin{corollary}
\label{cor:commutator-for-neighbouring-pieces-projectives}
Let $P,P'$ be indecomposable projective objects of $\ca$. Then
\[
[\Sigma P][P']=\sum_{[C]\in\cp(\ca)}F_{\Sigma P,P'}^C[C]+q^{(-1)^{d-1}\langle\sigma^{-1} P',P\rangle_o}[P'\oplus\Sigma P],
\]
where
\[
F_{\Sigma P,P'}^C=|{}_0\Hom_\ca(P,P')_C|\cdot\frac{|\Aut_\ca(C)|}{|\Aut_\ca(P)|\cdot |\Aut_\ca(P')|}\cdot\frac{1}{\{P',P\}}.
\]
If $d=2$, then
\begin{align*}
[\Sigma P][P']-q^{(P,P')_o-\langle P',P\rangle}&[P'][\Sigma P]=
\sum_{[C]\in\cp(\ca)}F_{\Sigma P,P'}^C[C]\\
&\hspace{15pt}-q^{(P,P')_o-\langle P',P\rangle}\sum_{[C]\in\cp(\ca)}F_{\Sigma \sigma^{-1}P',P}^C[\Sigma C];
\end{align*}
if $d>2$, then
\begin{align*}
[\Sigma P][P']-q^{(P,P')_o-\langle P',P\rangle}[P'][\Sigma P]&=\sum_{[C]\in\cp(\ca)}F_{\Sigma P,P'}^C[C].
\end{align*}
\end{corollary}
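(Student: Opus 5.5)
We will mirror the proof of Corollary~\ref{cor:commutator-for-neighbouring-pieces-simples}, but now for indecomposable projectives $P,P'$. The first step applies Proposition~\ref{prop:Hall-subalgebra-in-root-category} (b) with $M=P$, $N=P'$. Since $\ca$ is hereditary, the image of any morphism $f\colon P\to P'$ is projective, hence a direct summand of $P$. As $P$ is indecomposable, $f$ is either zero or a monomorphism. Consequently ${}_K\Hom_\ca(P,P')_C$ is empty unless one of two things holds: $K=0$ (the monomorphisms, with arbitrary cokernel $C$), or $K=P$ and $C=P'$ (the zero morphism).

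The zero morphism gives the term $F_{\Sigma P,P'}^{P'\oplus\Sigma P}=q^{(-1)^{d-1}\langle\sigma^{-1}P',P\rangle_o}$, which is already computed in Proposition~\ref{prop:Hall-subalgebra-in-root-category} (b). For the terms with $K=0$ we substitute into the general formula. All factors involving $K$ become $1$, and this leaves
\[
|{}_0\Hom_\ca(P,P')_C|\frac{|\Aut_\ca(C)|}{|\Aut_\ca(P)||\Aut_\ca(P')|}\cdot\frac{\{P,P'\}\{C,C\}}{\{P,P\}\{P',P'\}}.
\]
To simplify the last factor we use the short exact sequence $0\to P\to P'\to C\to 0$ in $\ca$. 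By Lemma~\ref{lem:A-proper-abelian}, $\ca$ is an extension-closed proper abelian subcategory, so Lemma~\ref{lem:derived-Hall-number-for-proper-abelian-subcategory} shows that $\{-,-\}$ is multiplicative in each argument on $\ca$. Multiplicativity here should mean that $\{-,-\}$ factors through the Grothendieck group; via Lemma~\ref{lem:when-is-root-category-left-homologically-finite} (d) it equals $q^{\langle-,-\rangle_o}$, and $\langle-,-\rangle_o$ is additive on short exact sequences. Writing $[C]=[P']-[P]$ in $K_0$ then gives
\[
\{C,C\}=\frac{\{P',P'\}\{P,P\}}{\{P',P\}\{P,P'\}}.
\]
Substituting this, the factor collapses to $1/\{P',P\}$, which is the claimed formula for $F^C_{\Sigma P,P'}$. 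This bookkeeping is the only step that needs care.

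For the commutator formulas we follow the simple case. When $d=2$ we apply the first formula to the pair $(\sigma^{-1}P',P)$, which gives
\[
[\Sigma\sigma^{-1}P'][P]=\sum_C F^C_{\Sigma\sigma^{-1}P',P}[C]+q^{-\langle\sigma^{-1}P,\sigma^{-1}P'\rangle_o}[P\oplus\Sigma\sigma^{-1}P'].
\]
We then apply the automorphism $\Sigma$ of $\ch(\cc)$ and use $\Sigma^2\sigma^{-1}P'\cong P'$ in $\cc$. This yields
\[
[P'][\Sigma P]=\sum_C F^C_{\Sigma\sigma^{-1}P',P}[\Sigma C]+q^{-\langle P,P'\rangle_o}[P'\oplus\Sigma P].
\]
Multiplying by $q^{(-1)^{d-1}\langle\sigma^{-1}P',P\rangle_o+\langle P,P'\rangle_o}$ and subtracting cancels the $[P'\oplus\Sigma P]$ terms. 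By the third identity of Lemma~\ref{lem:rotate-orbital-Euler-form} (the case $t=1$), this exponent equals $(P,P')_o-\langle P',P\rangle$.

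When $d>2$ we instead use Proposition~\ref{prop:Hall-subalgebra-in-root-category} (c) with $t=d-1$, which gives $[\Sigma^{d-1}\sigma^{-1}P'][P]=q^{-\langle P,P'\rangle_o}[P\oplus\Sigma^{d-1}\sigma^{-1}P']$. Applying $\Sigma$ gives $[P'][\Sigma P]=q^{-\langle P,P'\rangle_o}[P'\oplus\Sigma P]$, so this time there are no extra terms, and the same subtraction yields the stated identity.
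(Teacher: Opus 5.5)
Your proposal is correct and follows the paper's proof essentially step by step. It uses the mono-or-zero dichotomy from heredity and Proposition~\ref{prop:Hall-subalgebra-in-root-category}~(b) for the two kinds of terms. It then uses the short exact sequence $0\to P\to P'\to C\to 0$ to reduce the $\{-,-\}$ factor to $1/\{P',P\}$, and finishes with the $\Sigma$-rotation trick: the first formula applied to $(\sigma^{-1}P',P)$ for $d=2$, Proposition~\ref{prop:Hall-subalgebra-in-root-category}~(c) with $t=d-1$ for $d>2$, and Lemma~\ref{lem:rotate-orbital-Euler-form} with $t=1$. The only cosmetic difference is how you simplify $\frac{\{P,P'\}\{C,C\}}{\{P,P\}\{P',P'\}}$: you treat $\{-,-\}=q^{\langle-,-\rangle_o}$ as a bilinear form on $K_0(\ca)$, whereas the paper applies the multiplicativity of Lemma~\ref{lem:derived-Hall-number-for-proper-abelian-subcategory} twice along the sequence.
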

\begin{proof}
Since $\ca$ is hereditary, it follows that any morphism $P\to P'$ is either a monomorphism or zero. Therefore by Proposition~\ref{prop:Hall-subalgebra-in-root-category} (b), we have
\[
[\Sigma P][P']=\sum_{[C]\in\cp(\ca)}F_{\Sigma P,P'}^{C}[C]+F_{\Sigma P,P'}^{P'\oplus\Sigma P}[P'\oplus \Sigma P].
\]
where  $F_{\Sigma P,P'}^{P'\oplus\Sigma P}=q^{(-1)^{d-1}\langle\sigma^{-1}P',P\rangle_o}$. Moreover,
\begin{align*}
F_{\Sigma P,P'}^{C}&=|{}_0\Hom_\ca(P,P')_C|\cdot\frac{|\Aut_\ca(C)|}{|\Aut_\ca(P)|\cdot |\Aut_\ca(P')|}\cdot
\frac{\{P,P'\}\{C,C\}}{\{P,P\}\{P',P'\}}.
\end{align*}
In this case there is a short exact sequence $\xymatrix@C=0.7pc{0\ar[r] &P\ar[r] & P'\ar[r] & C\ar[r] &0}$ in $\ca$, so by Lemma~\ref{lem:derived-Hall-number-for-proper-abelian-subcategory} we have
\begin{align*}
\frac{\{P,P'\}\{C,C\}}{\{P,P\}\{P',P'\}}&=\frac{\{C,C\}}{\{P,P\}\{C,P'\}}=\frac{1}{\{P,P\}\{C,P\}}=\frac{1}{\{P',P\}}.
\end{align*}
Therefore,
\begin{align*}
F_{\Sigma P,P'}^{C}&=|{}_0\Hom_\ca(P,P')_C|\cdot\frac{|\Aut_\ca(C)|}{|\Aut_\ca(P)|\cdot |\Aut_\ca(P')|}\cdot
\frac{1}{\{P',P\}}.
\end{align*}

Next assume $d=2$. By applying $\Sigma$ to
\begin{align*}
[\Sigma \sigma^{-1}P'][P]&=\sum_{[C]\in\cp(\ca)}F_{\Sigma \sigma^{-1}P',P}^C[C]+q^{-\langle\sigma^{-1} P,\sigma^{-1}P'\rangle_o}[P\oplus\Sigma \sigma^{-1}P']\\
&=\sum_{[C]\in\cp(\ca)}F_{\Sigma \sigma^{-1}P',P}^C[C]+q^{-\langle P,P'\rangle_o}[P\oplus\Sigma \sigma^{-1}P'],
\end{align*}
we obtain
\begin{align*}
[P'][\Sigma P]&=[\Sigma^2 \sigma^{-1}P'][\Sigma P]=\sum_{[C]\in\cp(\ca)}F_{\Sigma \sigma^{-1} P',P}^C[\Sigma C]+q^{-\langle P,P'\rangle_o}[P'\oplus\Sigma P].
\end{align*}
Therefore
\begin{align*}
[\Sigma P][P']-&q^{-\langle\sigma^{-1}P',P\rangle_o+\langle P,P'\rangle_o}[P'][\Sigma P]=
\sum_{[C]\in\cp(\ca)}F_{\Sigma P,P'}^C[C]\\
&\hspace{15pt}-q^{-\langle\sigma^{-1}P',P\rangle_o+\langle P,P'\rangle_o}\sum_{[C]\in\cp(\ca)}F_{\Sigma \sigma^{-1}P',P}^C[\Sigma C].
\end{align*}
The desired formula follows from Lemma~\ref{lem:rotate-orbital-Euler-form}.

Finally assume $d>2$. By Proposition~\ref{prop:Hall-subalgebra-in-root-category} (c), we have
\[
[\Sigma^{d-1}\sigma^{-1}P'][P]=q^{-\langle \sigma^{-1}P,\sigma^{-1}P'\rangle_o}[P\oplus\Sigma^{d-1} \sigma^{-1}P']=q^{-\langle P,P'\rangle_o}[P\oplus\Sigma^{d-1} \sigma^{-1}P'].
\]
By applying the automorphism $\Sigma$ of $\ch(\cc)$ to it, we obtain
\[
[P'][\Sigma P]=[\Sigma^{d}\sigma^{-1}P'][\Sigma P]=q^{-\langle P,P'\rangle_o}[P'\oplus\Sigma P].
\]
Therefore
\[
[\Sigma P][P']-q^{(-1)^{d-1}\langle\sigma^{-1}P',P\rangle_o+\langle P,P'\rangle_o}[P'][\Sigma P]=
\sum_{[C]\in\cp(\ca)}F_{\Sigma P,P'}^C[C].
\]
The desired formula follows from Lemma~\ref{lem:rotate-orbital-Euler-form}.
\end{proof}

\begin{corollary}
\label{cor:commutator-for-neighbouring-pieces-projective-simple}
Let $P\in\ca$ be indecomposable projective and $S\in\ca$ be simple.
\begin{itemize}
\item[(a)]  $
[\Sigma P][S]=\begin{cases}
F_{\Sigma P,S}^{\Sigma\rad(P)}[\Sigma\rad(P)]+q^{(-1)^{d-1}\langle \sigma^{-1}S,P\rangle_o}[S\oplus\Sigma P] & \text{if }S\cong\top(P)\\
q^{(-1)^{d-1}\langle \sigma^{-1}S,P\rangle_o}[S\oplus\Sigma P] & \text{if }S\not\cong \top(P)
\end{cases},
$
where
\[
F_{\Sigma P,S}^{\Sigma\rad(P)}=\frac{|\Aut_\ca(\rad(P))|}{|\Aut_\ca(P)|}\cdot\frac{1}{\{S,P\}}=\frac{|\Aut_\ca(\rad(P))|}{|\Aut_\ca(P)|}q^{-\langle S,P\rangle_o}.
\]
\item[(b)] Assume that $\Hom_\ca(S,P)=0$. Then
$
[\Sigma S][P]=q^{(-1)^{d-1}\langle \sigma^{-1}P,S\rangle_o}[P\oplus\Sigma S].
$
If $d=2$, then
\begin{align*}
[\Sigma S][P]-&q^{(S,P)_o-\langle P,S\rangle}[P][\Sigma S]\\
&=
\begin{cases} -q^{(S,P)_o-\langle P,S\rangle}\frac{|\Aut_\ca(\rad(P))|}{|\Aut_\ca(P)|}q^{-\langle \sigma S,P\rangle_o}[\rad(P)] &\text{if }\sigma S\cong \top(P)\\
0 & \text{if }\sigma S\not\cong\top(P)
\end{cases};
\end{align*}
if $d>2$, then
\[
[\Sigma S][P]-q^{(S,P)_o-\langle P,S\rangle}[P][\Sigma S]=0.
\]

\item[(c)]  If $d>2$, then
\begin{align*}
[\Sigma P][S]-&q^{(P,S)_o-\langle S,P\rangle}[S][\Sigma P]\\
&=
\begin{cases} \frac{|\Aut_\ca(\rad(P))|}{|\Aut_\ca(P)|}q^{-\langle S,P\rangle_o}[\Sigma \rad(P)] & \text{if }S\cong \top(P)\\
0 &\text{if }S\not\cong \top(P)
\end{cases}.
\end{align*}
This is also true for $d=2$, provided that $\Hom_\ca(\sigma^{-1}S,P)=0$.
\end{itemize}
\end{corollary}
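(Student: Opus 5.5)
Part (a) follows from Proposition~\ref{prop:Hall-subalgebra-in-root-category} (b) with $M=P$, $N=S$. Since $\ca$ is hereditary, a morphism $f\colon P\to S$ is either zero or, when $S\cong\top(P)$, an epimorphism with kernel $\rad(P)$ and cokernel $0$. The zero morphism contributes $F_{\Sigma P,S}^{S\oplus\Sigma P}=q^{(-1)^{d-1}\langle\sigma^{-1}S,P\rangle_o}$, as computed there. A nonzero $f$ (if $S\cong \top(P)$) contributes the term $C=0$, $K=\rad(P)$. For this term we have $|{}_{\rad P}\Hom_\ca(P,S)_0|=|\Aut(S)|$, because $\Hom(P,S)\cong\End(S)$ and every nonzero element is surjective. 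The factor $|\Ext^1(C,K)|/|\Hom(C,K)|$ equals $1$, and the remaining factor is $\frac{\{P,S\}\{\rad P,\rad P\}}{\{P,P\}\{S,S\}}$. To simplify it I will use the short exact sequence $0\to\rad P\to P\to S\to 0$ together with the bimultiplicativity of $\{-,-\}$ on $\ca$ (Lemma~\ref{lem:derived-Hall-number-for-proper-abelian-subcategory} and Lemma~\ref{lem:A-proper-abelian}). This gives $\{P,P\}=\{\rad P,\rad P\}\{\rad P,S\}\{S,\rad P\}\{S,S\}$ and $\{P,S\}=\{\rad P,S\}\{S,S\}$. The ratio then becomes $1/\{S,\rad P\}$, and since $\{S,P\}=\{S,\rad P\}\{S,S\}$ I still need to account for the factor $\{S,S\}$ against $|\Aut(S)|/|\Aut(S)|$. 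I expect this bookkeeping to be delicate. I will check the formula against the special case $F_{\Sigma M,M}^0$ to make sure the constant comes out as $\frac{|\Aut(\rad P)|}{|\Aut(P)|}\{S,P\}^{-1}$, and then apply Lemma~\ref{lem:when-is-root-category-left-homologically-finite} (d) to get $\{S,P\}=q^{\langle S,P\rangle_o}$.

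For part (b), the hypothesis $\Hom(S,P)=0$ means that only the zero morphism occurs, so Proposition~\ref{prop:Hall-subalgebra-in-root-category} (b) gives the first formula directly. For the commutator I will copy the proof of Corollary~\ref{cor:commutator-for-neighbouring-pieces-simples}. When $d>2$, Proposition~\ref{prop:Hall-subalgebra-in-root-category} (c) gives $[\Sigma^{d-1}\sigma^{-1}P][S]=q^{-\langle S,P\rangle_o}[S\oplus\Sigma^{d-1}\sigma^{-1}P]$. Applying the automorphism $\Sigma$ yields $[P][\Sigma S]=q^{-\langle S,P\rangle_o}[P\oplus\Sigma S]$, and Lemma~\ref{lem:rotate-orbital-Euler-form} matches the exponents. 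When $d=2$, I will instead apply part (a) to the pair $(\sigma^{-1}P,S)$, note that $\top(\sigma^{-1}P)\cong S$ exactly when $\sigma S\cong\top(P)$, and then apply $\Sigma$. This produces the extra term $-\frac{|\Aut\rad P|}{|\Aut P|}q^{-\langle S,\sigma^{-1}P\rangle_o}[\Sigma^2\sigma^{-1}\rad P]$, which, after using $\langle S,\sigma^{-1}P\rangle_o=\langle\sigma S,P\rangle_o$ and $\Sigma^2\sigma^{-1}\cong\mathrm{id}$ on objects, is the stated correction multiplied by $q^{(S,P)_o-\langle P,S\rangle}$.

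For part (c) I will combine (a) with the expression for $[S][\Sigma P]$. When $d>2$, Proposition~\ref{prop:Hall-subalgebra-in-root-category} (c) with $t=d-1$, applied to $(\sigma^{-1}S,P)$ and followed by $\Sigma$, gives $[S][\Sigma P]=q^{-\langle P,S\rangle_o}[S\oplus\Sigma P]$. When $d=2$, the hypothesis $\Hom(\sigma^{-1}S,P)=0$ allows (b) to be applied to $[\Sigma\sigma^{-1}S][P]$, which gives the same formula. Subtracting the appropriate multiple from (a), the $[S\oplus\Sigma P]$ terms cancel by Lemma~\ref{lem:rotate-orbital-Euler-form}, and what remains is the $\Sigma\rad(P)$ term. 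The main obstacle is the normalising constant in (a); the other steps are exponent matching as in the previous corollaries.
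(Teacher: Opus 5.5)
Your route is the paper's. Part (a) comes from Proposition~\ref{prop:Hall-subalgebra-in-root-category}~(b) with the surviving pairs $(K,C)=(P,S)$ and $(\rad P,0)$. For (b) and (c) you compute $[P][\Sigma S]$, resp.\ $[S][\Sigma P]$: via Proposition~\ref{prop:Hall-subalgebra-in-root-category}~(c) when $d>2$, and via (a), resp.\ the first formula of (b), after twisting by $\sigma^{-1}$ when $d=2$. You then apply $\Sigma$ and match exponents with Lemma~\ref{lem:rotate-orbital-Euler-form}. All of this is fine. A cosmetic point: a nonzero map $P\to S$ is an epimorphism because $S$ is simple, not because $\ca$ is hereditary.

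The step that fails as written is the normalising constant in (a). Part (b) for $d=2$ and part (c) both use this constant. Your expansions $\{P,P\}=\{\rad P,\rad P\}\{\rad P,S\}\{S,\rad P\}\{S,S\}$ and $\{P,S\}=\{\rad P,S\}\{S,S\}$ are correct, but the ratio is not $1/\{S,\rad P\}$. The factor $\{S,S\}$ occurs twice in the denominator (once inside $\{P,P\}$, once explicitly) and only once in the numerator. Hence
\[
\frac{\{P,S\}\{\rad P,\rad P\}}{\{P,P\}\{S,S\}}=\frac{1}{\{S,\rad P\}\{S,S\}}=\frac{1}{\{S,P\}},
\]
where the last equality is multiplicativity in the second argument applied to $0\to\rad P\to P\to S\to 0$.

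So there is no leftover $\{S,S\}$. The remedy you propose could not have worked anyway. The $|\Aut(S)|$ from $|{}_{\rad P}\Hom_\ca(P,S)_0|$ cancels exactly against $|\Aut_\ca(N)|=|\Aut(S)|$. Meanwhile $\{S,S\}=q^{\langle S,S\rangle_o}$ is in general not $1$: for $Q^l$ with $r=1$ one has $\langle S_i,S_i\rangle_o=(-1)^{d+1}$. Taking your intermediate value at face value gives the wrong constant $\frac{|\Aut(\rad P)|}{|\Aut(P)|}\cdot\frac{\{S,S\}}{\{S,P\}}$, and this error would propagate into (b) for $d=2$ and into (c).

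With the cancellation done correctly, the constant is $\frac{|\Aut(\rad P)|}{|\Aut(P)|}\{S,P\}^{-1}=\frac{|\Aut(\rad P)|}{|\Aut(P)|}q^{-\langle S,P\rangle_o}$ by Lemma~\ref{lem:when-is-root-category-left-homologically-finite}~(d). The rest of your proposal then goes through. The paper avoids the full four-fold expansion: it first writes $\{P,P\}=\{P,\rad P\}\{P,S\}$ to cancel $\{P,S\}$, then uses $\{P,\rad P\}=\{\rad P,\rad P\}\{S,\rad P\}$.
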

\begin{proof}
(a)
If $S\not\cong \top(P)$, then $\Hom_\ca(P,S)=0$ and ${}_K\Hom_\ca(P,S)_C$ is non-empty if and only if $K=P$ and $C=S$.
It follows from Proposition~\ref{prop:Hall-subalgebra-in-root-category} (b) that
\[
[\Sigma P][S]=F_{\Sigma P,S}^{S\oplus\Sigma P}[S\oplus \Sigma P]=q^{(-1)^{d-1}\langle\sigma^{-1}S,P\rangle_o}[S\oplus\Sigma P].
\]
If $S\cong \top(P)$, then $\Hom_\ca(P,S)\cong\End_\ca(S)$, and ${}_K\Hom_\ca(P,S)_C$ is non-empty if and only if $K=P$ and $C=S$, or $K=\rad(P)$ and $C=0$. It follows from Proposition~\ref{prop:Hall-subalgebra-in-root-category} (b) that
\begin{align*}
[\Sigma P][S]&=F_{\Sigma P,S}^{\Sigma \rad(P)}[\Sigma \rad(P)]+F_{\Sigma P,S}^{S\oplus\Sigma P}[S\oplus \Sigma P]\\
&=F_{\Sigma P,S}^{\Sigma \rad(P)}[\Sigma \rad(P)]+q^{(-1)^{d-1}\langle\sigma^{-1}S,P\rangle_o}[S\oplus\Sigma P],
\end{align*}
where
\begin{align*}
F_{\Sigma P,S}^{\Sigma \rad(P)}&=|{}_{\rad(P)}\Hom_\ca(P,S)_0|\cdot\frac{|\Aut_\ca(\rad(P))|}{|\Aut_\ca(P)|\cdot |\Aut_\ca(S)|}\cdot \frac{\{P,S\}\{\rad(P),\rad(P)\}}{\{P,P\}\{S,S\}}\\
&=\frac{|\Aut_\ca(\rad(P))|}{|\Aut_\ca(P)|}\cdot \frac{\{P,S\}\{\rad(P),\rad(P)\}}{\{P,P\}\{S,S\}},
\end{align*}
since ${}_{\rad(P)}\Hom_\ca(P,S)_0=\Aut_\ca(S)$.
Because there is a short exact sequence\\ $\xymatrix@C=0.7pc{0\ar[r] &\rad(P)\ar[r] & P\ar[r] & S\ar[r] &0}$ in $\ca$, by Lemma~\ref{lem:derived-Hall-number-for-proper-abelian-subcategory} we have
\begin{align*}
\frac{\{P,S\}\{\rad(P),\rad(P)\}}{\{P,P\}\{S,S\}}&=\frac{\{\rad(P),\rad(P)\}}{\{P,\rad(P)\}\{S,S\}}=\frac{1}{\{S,\rad(P)\}\{S,S\}}=\frac{1}{\{S,P\}}.
\end{align*}
So
\begin{align*}
F_{\Sigma P,S}^{\Sigma\rad(P)}&=\frac{|\Aut_\ca(\rad(P))|}{|\Aut_\ca(P)|}\cdot \frac{1}{\{S,P\}}.
\end{align*}
(b) The first statement follows directly from Proposition~\ref{prop:Hall-subalgebra-in-root-category} (b).
Next assume $d=2$. By (a) we have
\begin{align*}
[\Sigma \sigma^{-1}P][S]&=\begin{cases}
F_{\Sigma \sigma^{-1}P,S}^{\Sigma\rad(\sigma^{-1}P)}[\Sigma\rad(\sigma^{-1}P)]\\ \hspace{15pt}+q^{(-1)^{d-1}\langle \sigma^{-1}S,\sigma^{-1}P\rangle_o}[S\oplus\Sigma \sigma^{-1}P] & \text{if }S\cong\top(\sigma^{-1}P)\\
q^{(-1)^{d-1}\langle \sigma^{-1}S,\sigma^{-1}P\rangle_o}[S\oplus\Sigma \sigma^{-1}P] & \text{if }S\not\cong \top(\sigma^{-1}P)
\end{cases}\\
&=\begin{cases}
F_{\Sigma P,\sigma S}^{\Sigma\rad(P)}[\Sigma \sigma^{-1}\rad(P)]\\ \hspace{15pt}+q^{-\langle S,P\rangle_o}[S\oplus\Sigma \sigma^{-1}P] & \text{if }\sigma S\cong\top(P)\\
q^{-\langle S,P\rangle_o}[S\oplus\Sigma \sigma^{-1}P] & \text{if }\sigma S\not\cong \top(P)
\end{cases}.
\end{align*}
Applying the automorphism $\Sigma$ to this equality we obtain
\begin{align*}
[P][\Sigma S]&=[\Sigma^2\sigma^{-1}P][\Sigma S]=\begin{cases}
F_{\Sigma P,\sigma S}^{\Sigma\rad(P)}[\rad(P)]\\ \hspace{15pt}+q^{-\langle S,P\rangle_o}[P\oplus\Sigma S] & \text{if }\sigma S\cong\top(P)\\
q^{-\langle S,P\rangle_o}[P\oplus\Sigma S] & \text{if }\sigma S\not\cong \top(P)
\end{cases}.
\end{align*}
Therefore
\begin{align*}
[\Sigma S][P]-&q^{(-1)^{d-1}\langle \sigma^{-1}P,S\rangle_o+\langle S,P\rangle_o}[P][\Sigma S]\\
&=
\begin{cases} -q^{(-1)^{d-1}\langle \sigma^{-1}P,S\rangle_o+\langle S,P\rangle_o}F_{\Sigma P,\sigma S}^{\Sigma\rad(P)}[\rad(P)] &\text{if }\sigma S\cong \top(P)\\
0 & \text{if }\sigma S\not\cong\top(P)
\end{cases}.
\end{align*}
The desired formula follows from (a) and Lemma~\ref{lem:rotate-orbital-Euler-form}.

Finally assume $d>2$. By Proposition~\ref{prop:Hall-subalgebra-in-root-category} (c), we have
\[
[\Sigma^{d-1}\sigma^{-1}P][S]=q^{-\langle\sigma^{-1}S,\sigma^{-1}P\rangle_o}[S\oplus\Sigma^{d-1}\sigma^{-1}P]=q^{-\langle S,P\rangle_o}[S\oplus\Sigma^{d-1}\sigma^{-1}P].
\]
Applying the automorphism $\Sigma$ to this equality we obtain
\begin{align*}
[P][\Sigma S]&=[\Sigma^d\sigma^{-1}P][\Sigma S]=q^{-\langle S,P\rangle_o}[P\oplus\Sigma S].
\end{align*}
Therefore
\begin{align*}
[\Sigma S][P]-q^{(-1)^{d-1}\langle \sigma^{-1}P,S\rangle_o+\langle S,P\rangle_o}[P][\Sigma S]=0
\end{align*}
The desired formula follows from Lemma~\ref{lem:rotate-orbital-Euler-form}.

(c) If $d>2$, by Proposition~\ref{prop:Hall-subalgebra-in-root-category} (c) we have

\[[\Sigma^{d-1} \sigma^{-1}S][P]=q^{-\langle \sigma^{-1}P,\sigma^{-1}S\rangle_o}[P\oplus\Sigma^{d-1} \sigma^{-1}S]=q^{-\langle P,S\rangle_o}[P\oplus\Sigma^{d-1} \sigma^{-1}S].\]

This is also true for $d=2$ by (b) under the extra assumption.  Applying the automorphism $\Sigma$ to this equality we obtain
\[
[S][\Sigma P]=q^{-\langle P,S\rangle_o}[S\oplus\Sigma P].
\]
In conjunction with (a), this implies
\begin{align*}
[\Sigma P][S]-&q^{(-1)^{d-1}\langle \sigma^{-1}S,P\rangle_o+\langle P,S\rangle_o}[S][\Sigma P]\\
&=
\begin{cases} F_{\Sigma P,S}^{\Sigma\rad(P)}[\Sigma \rad(P)] & \text{if }S\cong \top(P)\\
0 &\text{if }S\not\cong \top(P)
\end{cases}.
\end{align*}
The desired formula then follows from Lemma~\ref{lem:rotate-orbital-Euler-form} and the description of $F_{\Sigma P,S}^{\Sigma\rad(P)}$ in (a).
\end{proof}

\begin{corollary}
\label{cor:commutator-for-pieces-faraway}
Let $M,N\in\ca$. For $2\leq t\leq d-2$, we have
\[
[\Sigma^t M][N]-q^{(-1)^{t-1}((M,N)_o-\langle N,M\rangle)}[N][\Sigma^t M]=0.
\]
\end{corollary}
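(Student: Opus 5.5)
The plan is to compute both products $[\Sigma^t M][N]$ and $[N][\Sigma^t M]$ as scalar multiples of the single basis element $[N\oplus\Sigma^t M]$, and then compare the two scalars.

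\emph{First product.} Since $2\leq t\leq d-2\leq d-1$, Proposition~\ref{prop:Hall-subalgebra-in-root-category} (c) applies directly and gives
\[
[\Sigma^t M][N]=q^{(-1)^{d-t}\langle \sigma^{-1}N,M\rangle_o}[N\oplus \Sigma^t M].
\]

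\emph{Second product.} Here I use the trick already employed in Corollaries~\ref{cor:commutator-for-neighbouring-pieces-simples} and~\ref{cor:commutator-for-neighbouring-pieces-projectives}: write $N\cong\Sigma^d\sigma^{-1}N$ in $\cc$. Then
\[
[N][\Sigma^tM]=\Sigma^t\bigl([\Sigma^{d-t}\sigma^{-1}N][M]\bigr).
\]
The exponent satisfies $2\leq d-t\leq d-2$ because $2\leq t\leq d-2$. So Proposition~\ref{prop:Hall-subalgebra-in-root-category} (c) applies again, with $M$ replaced by $\sigma^{-1}N$ and $N$ replaced by $M$:
\[
[\Sigma^{d-t}\sigma^{-1}N][M]=q^{(-1)^{t}\langle\sigma^{-1}M,\sigma^{-1}N\rangle_o}[M\oplus\Sigma^{d-t}\sigma^{-1}N].
\]
Because $\sigma$ is an automorphism, $\langle\sigma^{-1}M,\sigma^{-1}N\rangle_o=\langle M,N\rangle_o$. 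Applying the algebra automorphism $\Sigma^t$ of $\ch(\cc)$ sends $[M\oplus\Sigma^{d-t}\sigma^{-1}N]$ to $[\Sigma^tM\oplus\Sigma^d\sigma^{-1}N]=[N\oplus\Sigma^tM]$. Hence
\[
[N][\Sigma^tM]=q^{(-1)^t\langle M,N\rangle_o}[N\oplus\Sigma^tM].
\]

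\emph{Comparing scalars.} We have
\[
[\Sigma^tM][N]=q^{(-1)^{d-t}\langle\sigma^{-1}N,M\rangle_o-(-1)^t\langle M,N\rangle_o}\,[N][\Sigma^tM].
\]
The second identity of Lemma~\ref{lem:rotate-orbital-Euler-form} (valid for $1\leq t\leq d-1$) rewrites the exponent as $(-1)^{t-1}((M,N)_o-\langle N,M\rangle)$, which is exactly the claimed relation.

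The only delicate point is checking the range condition, namely that $d-t\in[2,d-1]$ so that part (c), rather than part (b), of the proposition applies. This is exactly why the statement requires $t\leq d-2$.
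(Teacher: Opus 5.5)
Your proposal is correct and matches the paper's own proof step for step. Both arguments apply Proposition~\ref{prop:Hall-subalgebra-in-root-category} (c) to $[\Sigma^tM][N]$ and to $[\Sigma^{d-t}\sigma^{-1}N][M]$, transport the second product by the automorphism $\Sigma^t$ using $N\cong\Sigma^d\sigma^{-1}N$ in $\cc$, and then rewrite the exponent with the second identity of Lemma~\ref{lem:rotate-orbital-Euler-form}.
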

\begin{proof}
By Proposition~\ref{prop:Hall-subalgebra-in-root-category} (c), we have
\begin{align*}
[\Sigma^t M][N]&=q^{(-1)^{d-t}\langle \sigma^{-1}N,M\rangle_o}[N\oplus\Sigma^t M],\\
[\Sigma^{d-t}\sigma^{-1}N][M]&=q^{(-1)^{t}\langle \sigma^{-1}M,\sigma^{-1}N\rangle_o}[M\oplus\Sigma^{d-t} \sigma^{-1}N]\\
&=q^{(-1)^{t}\langle M,N\rangle_o}[M\oplus\Sigma^{d-t} \sigma^{-1}N]
\end{align*}
Applying the automorphism $\Sigma^t$ of $\ch(\cc)$ to the second equality, we obtain
\[
[N][\Sigma^t M]=[\Sigma^d\sigma^{-1}N][\Sigma^t M]=q^{(-1)^{t}\langle M,N\rangle_o}[N\oplus\Sigma^t M].
\]
Therefore
\[
[\Sigma^t M][N]-q^{(-1)^{d-t}\langle \sigma^{-1}N,M\rangle_o-(-1)^{t}\langle M,N\rangle_o}[N][\Sigma^t M]=0.
\]
We obtain the desired equality by Lemma~\ref{lem:rotate-orbital-Euler-form}.
\end{proof}

\subsection{The derived Hall algebra of a twisted root category: basis}
\label{ss:Hall-algebra-of-twisted-root-category-basis}
Keep the notation and assumptions as in the first paragraph of Section~\ref{ss:Hall-algebra-of-twisted-root-category-relations}.
In this subsection we present a basis for $\ch(\cc)$. It is clear that $\cp(\ca)$ is a basis of $\ch(\cc)$ when $d=1$. So we assume $d\geq 2$.

\begin{lemma}
\label{lemma-Hall-alg-basis-1}
The set $\{[M_0][\Sigma M_1] \cdots [\Sigma^{d-1} M_{d-1}]\mid M_0,M_1,\ldots,M_{d-1}\in\ca\}$ is linearly independent in $\ch(\cc)$.
\end{lemma}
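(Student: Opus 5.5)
We will show that the product $[M_0][\Sigma M_1]\cdots[\Sigma^{d-1}M_{d-1}]$ equals a nonzero scalar multiple of $[M_0\oplus\Sigma M_1\oplus\cdots\oplus\Sigma^{d-1}M_{d-1}]$ plus a linear combination of isoclasses that are strictly smaller for a suitable order. Linear independence then follows from a triangularity argument. By Lemma~\ref{lem:when-is-root-category-left-homologically-finite}~(b), every object of $\cc$ is uniquely, up to isomorphism, of the form $X=M_0\oplus\Sigma M_1\oplus\cdots\oplus\Sigma^{d-1}M_{d-1}$ with $M_j\in\ca$. So the isoclasses $[X]$ are in bijection with the tuples $([M_0],\ldots,[M_{d-1}])$. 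For this reason it suffices to prove the triangularity.

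\textbf{Step 1: compute the product from the right.} Set $Y_j=\Sigma^jM_j\oplus\cdots\oplus\Sigma^{d-1}M_{d-1}$. We multiply successively on the left, computing $[\Sigma^jM_j][Y_{j+1}]$ by To\"en's formula. The coefficient of $[\Sigma^jM_j\oplus Y_{j+1}]$ is $\frac{|\Hom_\cc(Y_{j+1},\Sigma^jM_j\oplus Y_{j+1})_{\Sigma^jM_j}|}{|\Aut(Y_{j+1})|}\cdot\{Y_{j+1},\Sigma^jM_j\}$. This is nonzero, because split monomorphisms exist and $\{-,-\}$ is a product of positive numbers. The other terms come from nonsplit triangles $Y_{j+1}\to Z\to\Sigma^jM_j\to\Sigma Y_{j+1}$, that is, from nonzero morphisms $\Sigma^jM_j\to\Sigma Y_{j+1}$. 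By Lemma~\ref{lem:when-is-root-category-left-homologically-finite}~(a), for $d\geq 2$ the only such morphisms that are nonzero go from $\Sigma^jM_j$ to $\Sigma^{j+1}M_{j+1}$ (a morphism $M_j\to M_{j+1}$ in $\ca$) or to $\Sigma\cdot\Sigma^{j}M_j$-type pieces; when $j=d-1$ and there is wraparound, there are also morphisms to $\Sigma^{d}M_0\cong\sigma M_0$ pieces. This last case only arises when we multiply in the other order, and our ordering avoids it because $Y_{j+1}$ only involves shifts $\ge j+1$. So the non-split terms only arise from morphisms $\Sigma^jM_j\to\Sigma^{j+1}M_{j+1}$, as in Proposition~\ref{prop:Hall-subalgebra-in-root-category}~(b). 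Their cones are $\Sigma^{j}\ker(f)\oplus\Sigma^{j+1}\cok(f)$ in place of $\Sigma^jM_j\oplus\Sigma^{j+1}M_{j+1}$, with $f\ne0$.

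\textbf{Step 2: choose an order.} Define the dimension vector in the Grothendieck group $K_0(\ca)$: $\underline{\dim}(X)=([M_0],\ldots,[M_{d-1}])$. This is extended by classes in $K_0(\ca)$ and compared, for example, by a total "length" $\ell(X)=\sum_j\ell(M_j)$, where $\ell$ is the length in $\ca$. This works since in our application every object has finite length; if one wants a general argument, use instead the total dimension $\sum_j\dim\End$ or $\sum_j \dim_\fk\Hom$ to a generator. A non-split replacement $(M_j,M_{j+1})\mapsto(\ker f,\cok f)$ with $f\neq0$ strictly decreases $\ell(M_j)+\ell(M_{j+1})$, because $\ell(\ker f)+\ell(\cok f)=\ell(M_j)+\ell(M_{j+1})-2\ell(\mathrm{im}f)$. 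Hence
\[
[M_0][\Sigma M_1]\cdots[\Sigma^{d-1}M_{d-1}]=c\,[M_0\oplus\Sigma M_1\oplus\cdots\oplus\Sigma^{d-1}M_{d-1}]+\sum_{\ell(Z)<\ell(X)}c_Z[Z],
\]
with $c\neq 0$. Inductively, the later multiplications also only produce terms of smaller total length, because the multiplication by $[\Sigma^jM_j]$ applied to a term $[Z]$ with $\ell(Z)<\ell(Y_{j+1})$ gives terms of length at most $\ell(M_j)+\ell(Z)$.

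\textbf{Step 3: conclude by triangularity.} Suppose we have a finite nontrivial linear relation among these products. Among the tuples appearing, pick one with maximal $\ell$, and then take the corresponding $[X]$. That basis element occurs only in products of tuples of length $\ge\ell(X)$ and, among those of equal length, only in the product indexed by $X$ itself. This uses the bijection between tuples and isoclasses. Hence its coefficient must vanish, which is a contradiction. The main obstacle is Step 1: we must verify carefully via Lemma~\ref{lem:when-is-root-category-left-homologically-finite}~(a) that no other nonzero morphisms $\Sigma^jM_j\to\Sigma Y_{j+1}$ exist, in particular none arising from the $F$-periodicity when $d=2$. Where such morphisms do appear, we must check that they still strictly decrease $\ell$, which holds since any non-split triangle in $\cc$ between such objects has its cone built from kernels and cokernels as above.
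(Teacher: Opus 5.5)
\paragraph{Your route versus the paper's.} You replace the paper's intrinsic degeneration order $\leq_\Delta$ with a numerical length $\ell$. Under $\leq_\Delta$, every non-split triangle $X\to Y\to Z\to\Sigma X$ gives $Y<_\Delta X\oplus Z$, and the order is stable under adding direct summands. That change is legitimate in principle.

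\paragraph{Step 1 misidentifies the non-split terms.} In $[\Sigma^jM_j][Y_{j+1}]$ the relevant morphisms go $\Sigma^jM_j\to\Sigma Y_{j+1}=\bigoplus_{l>j}\Sigma^{l+1}M_l$. These are elements of $\Hom_\cc(M_j,\Sigma^{l+1-j}M_l)$ with $2\le l+1-j\le d$. The summand of $\Sigma Y_{j+1}$ is $\Sigma^{j+2}M_{j+1}$, not $\Sigma^{j+1}M_{j+1}$. By Lemma~\ref{lem:when-is-root-category-left-homologically-finite}~(a), all of these vanish except when $j=0$ and $l=d-1$, where one gets $\Hom_\cc(M_0,\Sigma^dM_{d-1})=\Hom_\ca(M_0,\sigma M_{d-1})$. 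Three consequences follow:
\begin{itemize}
\item every step with $j\geq 1$ yields only the split term, so your inductive bookkeeping is unnecessary;
\item morphisms $M_j\to M_{j+1}$ never enter;
\item the only non-split terms come from exactly the wrap-around that you say your ordering avoids.
\end{itemize}
As in the proof of Lemma~\ref{lemma-Hall-alg-basis-2}, these terms are $\ker f\oplus\Sigma M_1\oplus\cdots\oplus\Sigma^{d-2}M_{d-2}\oplus\Sigma^{d-1}\sigma^{-1}\cok f$ with $0\neq f\colon M_0\to\sigma M_{d-1}$. The length drop therefore needs $\ell(\sigma^{-1}\cok f)=\ell(\cok f)$, not the identity you used. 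For finite-length $\ca$ this is repairable, since the autoequivalence $\sigma$ preserves composition length. That covers $\rep^b(Q_{p_1,p_2})$ and $\rep^b(Q^l)$.

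\paragraph{The more serious gap: no length function in the needed generality.} The lemma is stated for an arbitrary Hom-finite, Ext-finite hereditary $\ca$. It is also used in Section~\ref{s:derived-Hall-of-linear} for $\cc^+$, i.e.\ with $\ca=\rep^+(Q^l)$, where the projectives $P_i$ are infinite-dimensional and have infinite length. Your suggested substitutes are not justified:
\begin{itemize}
\item $\sum_j\dim\End(M_j)$ is not additive and can increase. For the quiver with $m\geq 2$ arrows $a\to b$, the surjection $P_a\to S_a$ has kernel $S_b^{\oplus m}$, so the value $1+1$ becomes $m^2$.
\item $\rep^+(Q^l)$ has no generator or cogenerator; a finite object maps onto only finitely many simples.
\end{itemize}

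\paragraph{How this could be fixed.} The paper's use of $\leq_\Delta$ removes both the case analysis of which morphisms occur and any finiteness assumption on lengths. To keep a numerical argument, you would need a function that is additive on short exact sequences, positive on nonzero objects, and compatible with $\sigma$ in slots $0$ and $d-1$. For $\rep^+(Q^l)$ one such choice is $\Lambda(X)=\lambda(M_0)+\lambda(\sigma M_{d-1})$ with $\lambda(M)=\sum_i 2^{-i}\dim M_i$. This is a case-by-case patch, not a proof of the lemma as stated.
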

\begin{proof} Following \cite{JensenSuZimmermann05}, we define a partial order $\leq_{\Delta}$ on $\mathcal{P(C)}$ by setting $[Y]\leq_{\Delta}[X]$ if there exists an object $Z$ of $\mathcal{C}$ and a triangle in $\mathcal{C}$:
\[X\rightarrow Y\oplus Z \rightarrow Z\rightarrow \Sigma X. \]
Note that if there is a triangle $\xymatrix@R=0.7pc{X\ar[r]^f & Y\ar[r]^g & Z\ar[r]^h & \Sigma X}$, then $Y<_{\Delta} X\oplus Z$ unless $Y\cong X\oplus Z$, due to the triangle
\[
\xymatrix{
X\oplus Z\ar[r]^{{\scriptsize\begin{pmatrix}f & 0\\ 0 & \mathrm{id}\end{pmatrix}}} & Y\oplus Z \ar[r]^(0.6){(g,0)} & Z\ar[r]^(0.3){{h\choose 0}} & \Sigma(X\oplus Z).
}
\]
We extend this partial order to a total order $\prec$.

Now let $M_{i,j}$ ($1\leq i\leq r$, $0\leq j\leq d-1$) be objects of $\mathcal{A}$. Suppose that $\lambda_1,\ldots, \lambda_r$ are rational numbers such that
\[\lambda_1 [M_{1,0}][\Sigma M_{1,1}]\cdots[\Sigma^{d-1} M_{1,d-1}]+\ldots +\lambda_r [M_{r,0}][\Sigma M_{r,1}]\cdots[\Sigma^{d-1} M_{r,d-1}]=0.\]
There is a unique maximal element among all the $[M_{i,0}\oplus\Sigma M_{i,1}\oplus \cdots \oplus \Sigma^{d-1}M_{i,d-1}]$'s, say $[M_{1,0}\oplus\Sigma M_{1,1}\oplus \cdots \oplus \Sigma^{d-1}M_{1,d-1}]$. Then we have
\begin{align*}
&\lambda_1 [M_{1,0}][\Sigma M_{1,1}]\cdots[\Sigma^{d-1} M_{1,d-1}]+\ldots +\lambda_r [M_{r,0}][\Sigma M_{r,1}]\cdots[\Sigma^{d-1} M_{r,d-1}]\\
=&\lambda_1 F_1 [M_{1,0}\oplus\Sigma M_{1,1}\oplus \cdots \oplus \Sigma^{d-1}M_{1,d-1}]+\hbox{smaller terms}=0,
\end{align*}
where $F_1\neq 0$ is the derived Hall number. Therefore $\lambda_1$ has to be zero. By induction on $r$, we obtain $\lambda_1=\ldots =\lambda_r=0$.
\end{proof}

To show that the set in Lemma~\ref{lemma-Hall-alg-basis-1} is a basis, we need a technical condition. A sequence $(f_i\colon M_i\to N_i)_{i\in\mathbb{Z}^+}$ of morphisms in $\ca$ is called a \emph{kernel-cokernel sequence} if $M_{i+1}=\ker(f_i)$ and $N_{i+1}=\cok(f_i)$ for all $i$. Note that such a sequence is stationary if and only if there exists $n\in\mathbb{Z}^+$ such that $f_i=0$ for all $i\geq n$.

\begin{lemma}\label{lemma-Hall-alg-basis-2} Assume that every kernel-cokernel sequence in $\ca$ is stationary. Then
the set $\{[M_0][\Sigma M_1] \cdots [\Sigma^{d-1} M_{d-1}]\mid M_0,M_1,\ldots,M_{d-1}\in\ca\}$ is a $\mathbb{Q}$-basis of $\ch(\cc)$.
\end{lemma}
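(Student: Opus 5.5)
Since linear independence is Lemma~\ref{lemma-Hall-alg-basis-1}, it remains to show spanning: every basis element $[X]$, $X\in\cc$, lies in the span $V$ of the products $[M_0][\Sigma M_1]\cdots[\Sigma^{d-1}M_{d-1}]$. By Lemma~\ref{lem:when-is-root-category-left-homologically-finite} (b), write $X\cong M_0\oplus\Sigma M_1\oplus\cdots\oplus\Sigma^{d-1}M_{d-1}$ with $M_j\in\ca$. The plan is to expand the product $[M_0][\Sigma M_1]\cdots[\Sigma^{d-1}M_{d-1}]$, show that it equals a nonzero multiple of $[X]$ plus a combination of classes strictly smaller in a suitable order, and then conclude by induction.

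\textbf{Expanding the product.} I would expand from the right. For pieces two or more shifts apart, Proposition~\ref{prop:Hall-subalgebra-in-root-category} (c) and its shifts give a pure direct sum times a power of $q$. Adjacent shifts $\Sigma^{j}M_j$ and $\Sigma^{j+1}M_{j+1}$, after applying $\Sigma^{-j}$, fall under Proposition~\ref{prop:Hall-subalgebra-in-root-category} (b). They produce the leading term $[M_j\oplus\Sigma M_{j+1}]$ with nonzero coefficient, together with terms $[C\oplus\Sigma K]$ where $C=\cok(f)$ and $K=\ker(f)$ for a nonzero $f\colon M_{j+1}\to M_j$. The wrap-around pair $\Sigma^{d-1}$ and $\Sigma^d\cong\sigma$ behaves similarly.

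\textbf{The induction order.} I would use a measure that these correction terms decrease, such as the total dimension vector, or a length in $K_0$ across the $d$ pieces. Each nonzero $f$ cancels a common piece: $\ker(f)$ and $\cok(f)$ are proper in the sense that $\ell(K)+\ell(C)<\ell(M_{j+1})+\ell(M_j)$ whenever lengths exist. The kernel-cokernel hypothesis is exactly what replaces finite length in general. Iterating the rewrite on the correction terms yields, for each pair of pieces, kernel-cokernel sequences. Since every such sequence is stationary, the rewriting process stops after finitely many steps. Making this precise, I would argue by contradiction: an infinite chain of rewrites would, by K\"onig's lemma (there are finitely many terms at each step, since Hall numbers are finite sums), give an infinite branch, and from it one extracts a kernel-cokernel sequence with infinitely many nonzero maps.

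Concretely, each term is a product of factors $[\Sigma^j N_j]$. Products not in standard order are reordered using the commutation formulas (Corollaries~\ref{cor:commutator-for-neighbouring-pieces-simples}--\ref{cor:commutator-for-pieces-faraway} in general form, derived from Proposition~\ref{prop:Hall-subalgebra-in-root-category}). The correction terms arising from reordering again involve kernels and cokernels of maps between the pieces. Products of several objects in $\ca$ at the same shift collapse via part (a) into Hall products, which are spanned by single classes.

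\textbf{Main obstacle.} The delicate step is organizing the bookkeeping so that well-foundedness really follows from stationarity of kernel-cokernel sequences. The difficulty is that different pairs of pieces interact, and the wrap-around via $\sigma$ mixes shift $d-1$ with shift $0$. I would first treat the case $d\ge 3$ with a single adjacent pair and get termination there, then handle the cyclic structure by a lexicographic induction on $(M_{d-1},\dots,M_0)$. If this fails, I would fall back to strong induction using the total order $\prec$ from the proof of Lemma~\ref{lemma-Hall-alg-basis-1}, and show via stationarity that each $[X]$ has only finitely many $\prec$-predecessors reachable by rewrites.
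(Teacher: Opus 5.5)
\textbf{Verdict: genuine gap.} It sits exactly where you put your ``main obstacle'', and it comes from misidentifying where correction terms arise.

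\textbf{Where the corrections really come from.} Proposition~\ref{prop:Hall-subalgebra-in-root-category}~(b) computes $[\Sigma M][N]$, the product with the \emph{higher} shift on the left. So it describes the reversed order, not your standard one. In the standard order, for $d\geq 3$,
\[
[\Sigma^jM_j][\Sigma^{j+1}M_{j+1}]=\Sigma^{j+1}\bigl([\Sigma^{d-1}\sigma^{-1}M_j][M_{j+1}]\bigr),
\]
and by Proposition~\ref{prop:Hall-subalgebra-in-root-category}~(c) this is a scalar multiple of the direct sum. There are no kernel/cokernel corrections coming from maps $M_{j+1}\to M_j$.

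More precisely, Lemma~\ref{lem:when-is-root-category-left-homologically-finite}~(a) gives $\Hom_\cc(\Sigma^iN_i,\Sigma^{j+1}N_j)=0$ for $0\leq i<j\leq d-1$, except when $(i,j)=(0,d-1)$, where it equals $\Hom_\ca(N_0,\sigma N_{d-1})$. Set $N'=\Sigma N_1\oplus\cdots\oplus\Sigma^{d-1}N_{d-1}$. Then:
\begin{itemize}
\item[(i)] $[\Sigma N_1]\cdots[\Sigma^{d-1}N_{d-1}]$ is a nonzero multiple of $[N']$;
\item[(ii)] $[N_0][N']$ is a nonzero multiple of $[N]$ plus terms $[Z]$ with $Z\cong\ker(f)\oplus\Sigma N_1\oplus\cdots\oplus\Sigma^{d-2}N_{d-2}\oplus\Sigma^{d-1}\sigma^{-1}\cok(f)$, one for each nonzero $f\colon N_0\to\sigma N_{d-1}$.
\end{itemize}
This is the paper's argument. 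The wrap-around interaction you mention only in passing is the \emph{only} interaction. No reordering via commutation formulas is ever needed.

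\textbf{Why this is the missing idea for termination.} Each correction term changes only pieces $0$ and $d-1$, replacing them by $\ker(f)$ and $\sigma^{-1}\cok(f)$. Applying the same step to $Z$ therefore produces a map $f_2\colon\ker(f)\to\cok(f)$, then $f_3\colon\ker(f_2)\to\cok(f_2)$, and so on. So every branch of the recursion is a kernel-cokernel sequence of nonzero maps. Hom-finiteness over $\fk$ makes the recursion finitely branching. Your K\"onig's lemma argument, a useful precision that the paper leaves implicit in ``by induction'', then shows that stationarity forces the recursion tree to be finite.

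In your picture, by contrast, there are several interacting pairs of pieces plus reordering corrections. An infinite branch could then alternate between different pairs, and nothing you wrote guarantees that it yields a single non-stationary kernel-cokernel sequence. That is precisely the step you leave unresolved.

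You also cannot bypass it with a length or dimension-vector measure. In the cases the paper needs, such as $\rep^+(Q^l)$, the projectives $P_i$ are infinite-dimensional, so no such measure is available.
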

\begin{proof} In view of Lemma~\ref{lemma-Hall-alg-basis-1}, it remains to show that the given set spans $\ch(\cc)$.
Let $N\in \mathcal{C}$ and write $N=N_0\oplus \Sigma N_1\oplus\ldots\oplus \Sigma^{d-1}N_{d-1}$. Without loss of generality, we may assume $N_0\neq 0$. Set $N'=\bigoplus\limits_{i=1}^{d-1} \Sigma^i N_i$, then $N\cong N_0\oplus N'$. We have
\begin{align*}
  [N_0][N']=F_{N_0,N'}^{N}[N]+\sum_{[Z]\neq [N]} F_{N_0,N'}^{Z}[Z]
\end{align*}
with $F_{N_0,N'}^{N}\neq 0$, and hence
\[
[N]=\frac{1}{F_{N_0,N'}^{N}}[N_0][N']-\sum_{[Z]\neq [N]} \frac{F_{N_0,N'}^{Z}}{F_{N_0,N'}^{N}}[Z].
\]
Consider a triangle
\[
\xymatrix{
N'\ar[r] & Z\ar[r] & N_0\ar[r]^h & \Sigma N'
}
\]
in $\mathcal{C}$. By Lemma~\ref{lem:when-is-root-category-left-homologically-finite} (a),
\begin{align*}
\Hom_\cc(N_0,\Sigma N')&=\Hom_\cc(N_0,\Sigma^2N_1\oplus\ldots\oplus \Sigma^{d-1}N_{d-2}\oplus \Sigma^d N_{d-1})\\
&=\Hom_\cc(N_0,\Sigma^d N_{d-1})=\Hom_\cc(N_0,\sigma N_{d-1})=\Hom_\ca(N_0,\sigma N_{d-1}).
\end{align*}
So we may consider $h$ as a column $(0,\ldots,0,f)^{\mathrm{tr}}$ with $f\in\Hom_\ca(N_0,\sigma N_{d-1})$. Complete $f$ to a triangle in $\cd$ and hence also in $\cc$
\[
\xymatrix{
\ker(f)\oplus\Sigma^{-1}\cok(f)\ar[r] & N_0\ar[r]^f\ar[r] &\sigma N_{d-1}\ar[r] & \Sigma\ker(f)\oplus \cok(f).
}
\]
Therefore
\begin{align*}
Z&\cong\ker(f)\oplus\Sigma N_1\oplus\ldots\oplus \Sigma^{d-2}N_{d-2}\oplus \Sigma^{-1}\cok(f)\\
&\cong\ker(f)\oplus\Sigma N_1\oplus\ldots\oplus \Sigma^{d-2}N_{d-2}\oplus \Sigma^{d-1}\sigma^{-1}\cok(f)
\end{align*}
 in $\cc$.
Repeating this procedure we obtain a kernel-cokernel sequence starting with $f\colon N_0\to \sigma N_{d-1}$, which is stationary by assumption. We obtain the desired result by induction.
\end{proof}

The notion of kernel-cokernel sequence is well-defined for general abelian categories $\ca$.
It is clear that if $\ca$ is artinian (that is, $\ca$ is skeletally small and every descending chain of subobjects of $\ca$ is stationary) or noetherian (that is, $\ca$ is skeletally small and every ascending chain of subobjects of $\ca$ is stationary), then every kernel-cokernel sequence in $\ca$ is stationary. Here are some examples:
\begin{itemize}
\item[(1)] If $Q$ is a quiver, then the category $\rep^b(Q)$ of finite-dimensional representations over $Q$ is both artinian and noetherian.
\item[(2)]$\rep^+(Q^l)$ is noetherian. In fact, it is equivalent to the category $\grmod k[x]$ of finitely generated graded $k[x]$-modules, where the degree of $x$ is 1.
\item[(3)] For a neotherian scheme $X$, the category $\coh(X)$ of coherent sheaves on $X$ is noetherian.
\item[(4)] Let $S$ be a totally ordered set such that for any $x\in S$, there are only finitely many $y\in S$ which are smaller than $x$. If there exists a map $\mathrm{l}\colon\mathrm{Ob}(\ca)\to S$ such that $\mathrm{l}(M)=\mathrm{l}(N)$ if $M\cong N$ and $\mathrm{l}(N)<\mathrm{l}(M)$ if $N$ is a proper subobject of $M$ (respectively, if $N$ is a proper quotient of $M$), then $\ca$ is artinian (respectively, noetherian).
\end{itemize}

\section{Derived Hall algebras of twisted root categories of $Q_{p_1,p_2}$ ($d\geq 2$)}
\label{s:derived-Hall-of-zigzag}

Let $p_1,p_2\geq 1$ be integers and $Q_{p_1,p_2}$ be the quiver with quiver-automorphism $\sigma=\sigma_{p_1,p_2}$ defined in the beginning of Section~\ref{s:quiver-generalised-zigzag-linear}. For integers $d\geq 1$ and $r\neq 0$, consider the $d$-root category of $\rep^b(Q_{p_1,p_2})$ twisted by $\sigma^{-r}$:
\[
\cc=\cd^b(\rep^b(Q_{p_1,p_2}))/\Sigma^d\circ\sigma^{-r}.
\]
In this section, we will give for $d\geq 2$ an explicit description of the derived Hall algebra $\ch(\cc)$ by providing a set of generators and relations. The case $d=1$ is exceptional due to the fact that the canonical functor $\ca\to\cc$ is not fully faithful, and we will deal with this case in a separate paper.

Let $\ch=\ch(p_1,p_2,d,r)$ be the $\mathbb{Q}$-algebra generated by $x_{i,j}$ ($i\in\mathbb{Z}$ and $j=0,1,\ldots,d-1$) subject to relations which will be given after the statement of Theorem~\ref{thm-for-zigzag}.

\begin{theorem}\label{thm-for-zigzag}
The assignment $x_{i,j}\mapsto [\Sigma^{j}S_i]$ ($i\in\mathbb{Z}$ and $j=0,1,\ldots,d-1$) defines an isomorphism $\ch\to\ch(\cc)$ of $\mathbb{Q}$-algebras.
\end{theorem}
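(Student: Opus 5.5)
Our plan has three steps: the assignment respects the relations, the resulting homomorphism is surjective, and it is injective via a PBW-type basis. The relations of $\ch(p_1,p_2,d,r)$ are presumably of three kinds. First, for fixed $j$, the twisted Serre relations among the $x_{i,j}$ are the relations of $\ch_{p_1,p_2}$ (Lemma~\ref{lem:RH-algebra-of-quiver-with-zigzag-orientation}) with each monomial $x_{i_1,j}\cdots x_{i_m,j}$ rescaled by $q^{\sum_{a<b}\langle S_{i_b},S_{i_a}\rangle_o}$. Second, for $j'\neq j\pm1$ (mod $d$) there are $q$-commutation relations with exponent $(-1)^{t-1}((S_i,S_{i'})_o-\langle S_{i'},S_i\rangle)$. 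Third, for neighbouring degrees there are $q$-commutation relations with the extra constant term $\frac{1}{(q-1)q^{\langle S_i,S_i\rangle_o}}$ when $i=i'$, and, if $d=2$, the extra term coming from $S'\cong\sigma^r S$.

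\textbf{The assignment respects the relations.} By Lemma~\ref{lem:A-proper-abelian}, $\ca=\rep^b(Q_{p_1,p_2})$ sits in $\cc$ as an extension-closed proper abelian subcategory. Proposition~\ref{prop:Hall-subalgebra-in-root-category}(a) then gives $[M][N]=q^{\langle N,M\rangle_o}[M]\diamond[N]$. Because $\langle-,-\rangle_o$ is additive, the subalgebra generated by $\{[S_i]\}$ is a twist of $\ch(\ca)$ by a bicharacter on dimension vectors. The Serre relations of Lemma~\ref{lem:RH-algebra-of-quiver-with-zigzag-orientation} therefore transport to twisted relations for $j=0$, and applying the automorphism $\Sigma^j$ gives them for every $j$.

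For $2\le t\le d-2$, the relation between degree $j$ and degree $j+t$ is Corollary~\ref{cor:commutator-for-pieces-faraway}, shifted by $\Sigma^j$. The neighbouring relation (degrees $j$, $j+1$, including the wrap-around pair $d-1$, $0$ via $\Sigma^d S_i\cong \sigma^r S_i$) is Corollary~\ref{cor:commutator-for-neighbouring-pieces-simples}. In all of these, $|\Aut(S)|=q-1$, and the needed exponents $\langle S_i,S_j\rangle_o$ and $(S_i,S_j)_o$ are read off from Lemma~\ref{lem:orbital-Euler-form-zigzag} and Corollary~\ref{cor:antisymmetric-orbital-Euler-form-zigzag}. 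This yields an algebra homomorphism $\Phi\colon\ch\to\ch(\cc)$.

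\textbf{Surjectivity.} The category $\rep^b(Q_{p_1,p_2})$ is artinian, so Lemma~\ref{lemma-Hall-alg-basis-2} applies. Hence the elements $[M_0][\Sigma M_1]\cdots[\Sigma^{d-1}M_{d-1}]$ form a basis of $\ch(\cc)$. Each $[\Sigma^jM]$ is $\Sigma^j$ applied to an element of the twisted $\ch(\ca)$. The latter is generated by the $[S_i]$ by Lemma~\ref{lem:RH-algebra-of-quiver-with-zigzag-orientation}, since twisting by a bicharacter changes products only by nonzero scalars. So $\Phi$ is surjective.

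\textbf{Injectivity.} Let $\ch^{(j)}\subseteq\ch$ be the subalgebra generated by the $x_{i,j}$ for fixed $j$. The commutation relations let us rewrite any monomial in $\ch$ as a combination of ordered products $u_0u_1\cdots u_{d-1}$ with $u_j\in\ch^{(j)}$. The constant terms only lower the length, so this is an induction on word length. Thus $\ch$ is spanned by $\ch^{(0)}\ch^{(1)}\cdots\ch^{(d-1)}$. Each $\ch^{(j)}$ is a quotient of the twisted $\ch_{p_1,p_2}$, which maps isomorphically onto the twisted $\ch(\ca)$. Choosing a basis of each $\ch^{(j)}$ mapping to the $[\Sigma^jM]$, the ordered products span $\ch$ and map onto the linearly independent set of Lemma~\ref{lemma-Hall-alg-basis-1}. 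Hence $\Phi$ is injective.

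\textbf{Main obstacle.} The hardest step is the reordering. The neighbouring relation from Corollary~\ref{cor:commutator-for-neighbouring-pieces-simples} moves degree $j+1$ past degree $j$ in the right direction. The wrap-around relation between degrees $d-1$ and $0$ must be used in the opposite direction, moving $x_{i,0}$ leftward past $x_{i',d-1}$. Since each relation is a two-term $q$-commutator plus a scalar, it can be used in either direction. I would check carefully that the resulting rewriting terminates; a filtration by length with an ordered monomial basis in each length should suffice. I would also check the case $d=2$, where both neighbouring relations involve the same pair of degrees.
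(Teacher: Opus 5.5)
Your proposal is correct and follows essentially the same route as the paper. The paper verifies the same-degree relations by twisting the Ringel--Hall presentation via Proposition~\ref{prop:Hall-subalgebra-in-root-category}(a), verifies the cross-degree relations via Corollaries~\ref{cor:commutator-for-neighbouring-pieces-simples} and~\ref{cor:commutator-for-pieces-faraway}, and then matches the decompositions $\ch_0\cdots\ch_{d-1}$ and $\ch_0(\cc)\cdots\ch_{d-1}(\cc)$ using Lemmas~\ref{lemma-Hall-alg-basis-1} and~\ref{lemma-Hall-alg-basis-2}. Your variant, which only needs spanning in $\ch$ and takes independence from Lemma~\ref{lemma-Hall-alg-basis-1}, is in fact the careful justification of the paper's tensor-product claim; the reordering you worry about is a routine induction on word length and number of degree inversions, since each cross-degree relation is a $q$-commutator with invertible coefficient plus a scalar.
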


Now we describe the relations.  Keep the notation in Section \ref{ss:quivers}: $n=p_1+p_2$, and for $i\in \mathbb{Z}$, $\overline{i}$ is the unique integer in the interval $[-p_1,p_2)$ such that $n|(i-\overline{i})$.  By convention we set $x_{i,d}=x_{\sigma(i),0}$. There are two cases:

\smallskip
\noindent Case $|nr|=2$ ($n=2$, $|r|=1$):
\begin{itemize}
   \item[(1)] relations for $x_{i,j}$ and $x_{i',j}$ ($i,i'\in\mathbb{Z}$ and $j=0,1,\ldots,d-1$):
   \begin{gather*}
   x_{i,j}x_{i+r,j}^2-q^{(-1)^{d+1}}(1+q^{-1})x_{i+r,j}x_{i,j}x_{i+r,j}+q^{2\cdot(-1)^{d+1}-1}x_{i+r,j}^2x_{i,j} ~\hbox{ for $\overline{i}=-1$},\\
    x_{i,j}^2x_{i+r,j}-q^{(-1)^{d+1}}(1+q^{-1})x_{i,j}x_{i+r,j}x_{i,j}+q^{2\cdot(-1)^{d+1}-1}x_{i+r,j}x_{i,j}^2 ~\hbox{ for $\overline{i}=-1$},\\
     x_{i,j}x_{i+r,j}^2-(1+q)x_{i+r,j}x_{i,j}x_{i+r,j}+qx_{i+r,j}^2x_{i,j} ~\hbox{ for $\overline{i}=0$},\\
    x_{i,j}^2x_{i+r,j}-(1+q)x_{i,j}x_{i+r,j}x_{i,j}+qx_{i+r,j}x_{i,j}^2 ~\hbox{ for $\overline{i}=0$},\\
    x_{i,j}x_{i+2kr,j}-q^{(-1)^{dk}}x_{i+2kr,j}x_{i,j} ~\hbox{ for } k>0,\\
    x_{i-(2k+1)r,j}x_{i,j}-x_{i,j}x_{i-(2k+1)r,j} ~\hbox{ for $\overline{i}=-1$ and $k>0$},\\
    x_{i-(2k+1)r,j}x_{i,j}-q^{(-1)^{d+1}-1}x_{i,j}x_{i-(2k+1)r,j} ~\hbox{ for $\overline{i}=0$ and $k>0$};    
   \end{gather*}

\item[(2)] relations for $x_{i,j+1}$ and $x_{i',j}$ ($i,i'\in\mathbb{Z}$ and $j=0,1,\ldots,d-1$):
  \begin{gather*}
  x_{i,j+1}x_{i-2kr,j}-q^{(-1)^{dk}}x_{i-2kr,j}x_{i,j+1}+\frac{\delta_{k,1}\delta_{d,2}}{q-1}q^{(-1)^{dk}} ~\hbox{ for } k>0,\\
    x_{i,j+1}x_{i-r,j}-q^{(-1)^{d+1}}x_{i-r,j}x_{i,j+1} ~\hbox{ for $\overline{i}=0$},\\
  x_{i,j+1}x_{i-(2k+1)r,j}-q^{(-1)^{d+1}-1}x_{i-(2k+1)r,j}x_{i,j+1} ~\hbox{ for $\overline{i}=0$ and $k>0$},\\
  x_{i,j+1}x_{i+2kr,j}-q^{(-1)^{dk+1}}x_{i+2kr,j}x_{i,j+1}-\frac{\delta_{k,0}}{q-1} ~\hbox{ for } k\geq0,\\
  x_{i,j+1}x_{i+(2k+1)r,j}-q^{(-1)^{d}+1}x_{i+(2k+1)r,j}x_{i,j+1} \hbox{ for $\overline{i}=-1$ and $k\geq 0$},\\
  x_{i,j+1}x_{i-r,j}- qx_{i-r,j}x_{i,j+1} ~\hbox{ for $\overline{i}=-1$},\\
  x_{i,j+1}x_{i',j}-x_{i',j}x_{i,j+1} ~\hbox{ for other  $i,i'$};
  \end{gather*}
  
\item[(3)] relations for $x_{i,j+t}$ and $x_{i',j}$ ($i,i'\in\mathbb{Z}$, $j=0,1,\ldots,d-1$ and $2\leq t\leq d-2$):
  \begin{gather*}
 x_{i,j+t}x_{i-2kr,j}-q^{(-1)^{dk+t-1}}x_{i-2kr,j}x_{i,j+t} ~\hbox{ for } k>0,\\
x_{i,j+t}x_{i-r,j}-q^{(-1)^{d+t}} x_{i-r,j}x_{i,j+t} ~\hbox{ for $\overline{i}=0$},\\
x_{i,j+t}x_{i-(2k+1)r,j}-q^{(-1)^{d+t}+(-1)^t} x_{i-(2k+1)r,j}x_{i,j+t} ~\hbox{ for $\overline{i}=0$ and $k\geq 0$},\\
  x_{i,j+t}x_{i+2kr,j}-q^{(-1)^{dk+t}}x_{i+2kr,j}x_{i,j+t} ~\hbox{ for } k\geq0,\\
  x_{i,j+t}x_{i+(2k+1)r,j}-q^{(-1)^{d+t-1}+(-1)^{t-1}}x_{i+(2k+1)r,j}x_{i,j+t} ~\hbox{ for $\overline{i}=-1$ and $k>0$},\\
  x_{i,j+t}x_{i-r,j}-q^{(-1)^{t-1}} x_{i-r,j}x_{i,j+t} ~\hbox{ for $\overline{i}=-1$},\\
  x_{i,j+t}x_{i',j}-x_{i',j}x_{i,j+t} ~\hbox{ for other  $i,i'$}.
  \end{gather*}
\end{itemize}

\smallskip
\noindent Case $|nr|>2$:
\begin{itemize}
   \item[(1)] relations for $x_{i,j}$ and $x_{i',j}$ ($i,i'\in\mathbb{Z}$ and $j=0,1,\ldots,d-1$):
   \begin{gather*}
   x_{i,j}x_{i+1,j}^2-(1+q^{-1})x_{i+1,j}x_{i,j}x_{i+1,j}+q^{-1}x_{i+1,j}^2x_{i,j} ~\hbox{ for $\overline{i}\in[-p_1,0)$},\\
   x_{i,j}^2x_{i+1,j}-(1+q^{-1})x_{i,j}x_{i+1,j}x_{i,j}+q^{-1}x_{i+1,j}x_{i,j}^2 ~\hbox{ for $\overline{i}\in[-p_1,0)$},\\
   x_{i,j}x_{i+1,j}^2-(1+q)x_{i+1,j}x_{i,j}x_{i+1,j}+qx_{i+1,j}^2x_{i,j} ~\hbox{ for $\overline{i}\in[0,p_2)$},\\
   x_{i,j}^2x_{i+1,j}-(1+q)x_{i,j}x_{i+1,j}x_{i,j}+qx_{i+1,j}x_{i,j}^2 ~\hbox{ for $\overline{i}\in[0,p_2)$},\\
   x_{i-nkr,j}x_{i,j}-q^{(-1)^{dk}}x_{i,j}x_{i-nkr,j} ~\hbox{ for } k>0,\\
   x_{i-(nkr+1),j}x_{i,j}-q^{(-1)^{dk+1}}x_{i,j}x_{i-(nkr+1),j} ~\hbox{ for $\overline{i}\in(-p_1,0]$ and $k>0$},\\
   x_{i-(nkr-1),j}x_{i,j}-q^{(-1)^{dk+1}}x_{i,j}x_{i-(nkr-1),j} ~\hbox{ for $\overline{i}\in[0,p_2)$ and $k>0$},\\
   x_{i,j}x_{i',j}-x_{i',j}x_{i,j} ~\hbox{ for other $i,i'$};
   \end{gather*}
  
\item[(2)] relations for $x_{i,j+1}$ and $x_{i',j}$ ($i,i'\in\mathbb{Z}$ and $j=0,1,\ldots,d-1$):
  \begin{gather*}
 	x_{i,j+1}x_{i-nkr,j}-q^{(-1)^{dk}}x_{i-nkr,j}x_{i,j+1}+\frac{\delta_{k,1}\delta_{d,2}}{q-1}q^{(-1)^{dk}} ~\hbox{ for } k>0,\\
	x_{i,j+1}x_{i-(nkr+1),j}-q^{(-1)^{dk+1}}x_{i-(nkr+1),j}x_{i,j+1} ~\hbox{ for $\overline{i}\in(-p_1,0]$ and $k>0$},\\
	x_{i,j+1}x_{i-(nkr-1),j}-q^{(-1)^{dk+1}}x_{i-(nkr-1),j}x_{i,j+1} ~\hbox{ for $\overline{i}\in[0,p_2)$ and $k>0$},\\
  	x_{i,j+1}x_{i+nkr,j}-q^{(-1)^{dk+1}}x_{i+nkr,j}x_{i,j+1}-\frac{\delta_{k,0}}{q-1} ~\hbox{ for } k\geq0,\\
  	x_{i,j+1}x_{i+(nkr+1),j}-q^{(-1)^{dk}}x_{i+(nkr+1),j}x_{i,j+1} ~\hbox{ for $\overline{i}\in[-p_1,0)$ and $k\geq0$},\\
  	x_{i,j+1}x_{i+(nkr-1),j}-q^{(-1)^{dk}}x_{i+(nkr-1),j}x_{i,j+1} ~\hbox{ for $\overline{i}\in(0,p_2]$ and $k\geq0$},\\
  	x_{i,j+1}x_{i',j}-x_{i',j}x_{i,j+1} ~\hbox{ for other }i,i';
  \end{gather*}

\item[(3)]  relations for $x_{i,j+t}$ and $x_{i',j}$ ($i\in\mathbb{Z}$, $j=0,1,\ldots,d-1$ and $2\leq t\leq d-2$):
  \begin{equation*}
   x_{i,j+t}x_{i-nkr,j}-q^{(-1)^{dk+t-1}}x_{i-nkr,j}x_{i,j+t} ~\hbox{ for } k>0
  \end{equation*}
  \begin{equation*}
  x_{i,j+t}x_{i-(nkr+1),j}-q^{(-1)^{dk+t}}x_{i-(nkr+1),j}x_{i,j+t} ~\hbox{ for $\overline{i}\in(-p_1,0]$ and $k>0$}
  \end{equation*}
  \begin{equation*}
   x_{i,j+t}x_{i-(nkr-1),j}-q^{(-1)^{dk+t}}x_{i-(nkr-1),j}x_{i,j+t} ~\hbox{ for $\overline{i}\in[0,p_2)$ and $k>0$}
  \end{equation*}
  \begin{equation*}
    x_{i,j+t}x_{i+nkr,j}-q^{(-1)^{dk+t}}x_{i+nkr,j}x_{i,j+1} ~\hbox{ for } k\geq0
  \end{equation*}
  \begin{equation*}
    x_{i,j+t}x_{i+(nkr+1),j}-q^{(-1)^{dk+t-1}}x_{i+(nkr+1),j}x_{i,j+t} ~\hbox{ for $\overline{i}\in[-p_1,0)$ and $k\geq0$}
  \end{equation*}
  \begin{equation*}
    x_{i,j+t}x_{i+(nkr-1),j}-q^{(-1)^{dk+t-1}}x_{i+(nkr-1),j}x_{i,j+t} ~\hbox{ for $\overline{i}\in(0,p_2]$ and $k\geq0$}
  \end{equation*}
  \begin{equation*}
   x_{i,j+t}x_{i',j}-x_{i',j}x_{i,j+t} ~\hbox{ for other }i,i'.
  \end{equation*}
\end{itemize}

Note that the relations in group (3) are empty for $d=2$ and $d=3$. The rest of this section is devoted to the proof of Theorem~\ref{thm-for-zigzag}, which is divided into three steps.

First, we verify the relations in group (1). For $j=0,1,\ldots,d-1$, let $\ch_j(\cc)$ be the subspace of $\ch(\cc)$ spanned by $[\Sigma^j M]$, $M\in\rep^b(Q_{p_1,p_2})$. Then it is a subalgebra of $\ch(\cc)$, by Proposition~\ref{prop:Hall-subalgebra-in-root-category} (a). Let $\ch_j$ be the subalgebra of $\ch$ generated by $x_{ij}$ ($i\in\mathbb{Z}$).

\begin{lemma}
\label{lem:relations-in-the-first-group-zigzag}
Let $j=0,1,\ldots,d-1$.
The assignment $x_{i,j}\mapsto [\Sigma^j S_i]$ ($i\in\mathbb{Z}$) defines an isomorphism $\ch_j\to\ch_j(\cc)$ of $\mathbb{Q}$-algebras.
\end{lemma}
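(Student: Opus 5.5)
The plan is to reduce to the case $j=0$ and then compare with the Ringel--Hall algebra $\ch(\rep^b(Q_{p_1,p_2}))$, twisted by the form $q^{\langle N,M\rangle_o}$.

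\emph{Reduction to $j=0$.} The shift $\Sigma$ is an algebra automorphism of $\ch(\cc)$. It maps $\ch_0(\cc)$ onto $\ch_j(\cc)$ via $[M]\mapsto[\Sigma^jM]$. The relations in group (1) do not depend on $j$, so $x_{i,0}\mapsto x_{i,j}$ also gives an isomorphism $\ch_0\cong\ch_j$. Here one should check that $\ch_j$, defined as a subalgebra of $\ch$, is presented by the group (1) relations alone. I would handle this at the end: the isomorphism $\ch\to\ch(\cc)$ restricts to $\ch_j$, and injectivity on $\ch_j$ will follow from injectivity of the map out of the abstract algebra $\ch_j'$ given by group (1) relations. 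So I work with $\ch_j'$ and prove $\ch_0'\to\ch_0(\cc)$ is an isomorphism.

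\emph{Twisting.} By Proposition~\ref{prop:Hall-subalgebra-in-root-category}(a) and Lemma~\ref{lem:A-proper-abelian}, $\ch_0(\cc)$ has basis $\cp(\ca)$ with $\ca=\rep^b(Q_{p_1,p_2})$ and product $[M][N]=q^{\langle N,M\rangle_o}[M]\diamond[N]$. By Lemma~\ref{lem:derived-Hall-number-for-proper-abelian-subcategory}, $\langle-,-\rangle_o$ is additive in each argument, and it depends only on dimension vectors, since the Euler form does. Hence $\ch_0(\cc)$ is the twist of $\ch(\ca)$ by the bicharacter $\chi(\alpha,\beta)=q^{\langle\beta,\alpha\rangle_o}$ on the grading by dimension vectors. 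Lemma~\ref{lem:RH-algebra-of-quiver-with-zigzag-orientation} presents $\ch(\ca)$ by $[S_i]$ with homogeneous relations: commutation for $|i-i'|>1$, and quantum Serre relations of degrees $\alpha_i+2\alpha_{i+1}$ and $2\alpha_i+\alpha_{i+1}$. A bicharacter twist of a graded algebra with homogeneous presentation is presented by the same generators, with each monomial $x_{a_1}\cdots x_{a_m}$ rescaled by $\prod_{s<t}\chi(\alpha_{a_s},\alpha_{a_t})^{-1}$. The inverse transformation is of the same type, so generation and injectivity transfer.

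\emph{Computing coefficients.} The remaining work is bookkeeping with Lemma~\ref{lem:orbital-Euler-form-zigzag} and Corollary~\ref{cor:antisymmetric-orbital-Euler-form-zigzag}. For a commutation relation $[S_i]\diamond[S_{i'}]=[S_{i'}]\diamond[S_i]$, rewriting in twisted products gives
\[
[S_i][S_{i'}]=q^{\langle S_{i'},S_i\rangle_o-\langle S_i,S_{i'}\rangle_o}[S_{i'}][S_i]=q^{-(S_i,S_{i'})_o}[S_{i'}][S_i].
\]
This is nontrivial precisely when $i'-i\in\{\pm nkr,\pm(nkr\pm1)\}$ or, for $|nr|=2$, the analogous cases. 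Checking these against the list yields the skew-commutation relations of group (1). For the Serre relations, $x_ix_{i+1}^2$, $x_{i+1}x_ix_{i+1}$ and $x_{i+1}^2x_i$ pick up factors built from $\langle S_i,S_{i+1}\rangle_o$, $\langle S_{i+1},S_i\rangle_o$ and $\langle S_{i+1},S_{i+1}\rangle_o$. When $|nr|>2$ these vanish because $n|r|\ge3$, so the Serre relations are unchanged. When $|nr|=2$, $\langle S_i,S_{i\pm r}\rangle_o$ equals $(-1)^{d+1}$ or $0$, and this produces the factors $q^{(-1)^{d+1}}$ and $q^{2(-1)^{d+1}-1}$, after converting the $q^{-1}$-Serre relation for $\overline i=-1$ with the neighbour $i+r$.

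I expect the main obstacle to be this last sign and index bookkeeping in the $|nr|=2$ case. There $i+r$ may be $i-1$, orientation-dependent Serre relations interact with the sign of $r$, and the $q$ versus $q^{-1}$ versions must be matched to $\overline i\in\{-1,0\}$ correctly. The structural part of the argument is formal.
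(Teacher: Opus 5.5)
Your proposal is correct and follows the paper's proof. You reduce to $j=0$ via the automorphism $\Sigma$, use Proposition~\ref{prop:Hall-subalgebra-in-root-category}(a) to view $\ch_0(\cc)$ as the Ringel--Hall algebra of $\rep^b(Q_{p_1,p_2})$ twisted by $q^{\langle N,M\rangle_o}$, transport the presentation of Lemma~\ref{lem:RH-algebra-of-quiver-with-zigzag-orientation}, and read off the coefficients from Corollary~\ref{cor:antisymmetric-orbital-Euler-form-zigzag}, using $\langle S_i,S_i\rangle_o=0$.

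You also separate the abstract algebra $\ch_j'$ presented by the group (1) relations from the subalgebra $\ch_j\subseteq\ch$, a point the paper leaves implicit. This step is sound, provided you invoke only the homomorphism $\ch\to\ch(\cc)$ there and not yet the isomorphism, which would be circular.
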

\begin{proof}
It is enough to prove the statement for the case $j=0$. By Lemma~\ref{lem:RH-algebra-of-quiver-with-zigzag-orientation}, the Ringel--Hall algebra $\ch(\rep^b(Q_{p_1,p_2}))$ of $\rep^b(Q_{p_1,p_2})$ is generated by $\{[S_i]\}_{i\in \mathbb{Z}}$ subject to the following relations:
\begin{equation*}
    [S_i]\diamond[S_{i+1}]^{\diamond 2}-(1+q^{-1})[S_{i+1}]\diamond[S_i]\diamond[S_{i+1}]+q^{-1}[S_{i+1}]^{\diamond2}\diamond[S_i]=0 ~\hbox{ for $\overline{i}\in[-p_1,0)$},
\end{equation*}
\begin{equation*}
   [S_i]^{\diamond2}\diamond[S_{i+1}]-(1+q^{-1})[S_i]\diamond[S_{i+1}]\diamond[S_i]+q^{-1}[S_{i+1}]\diamond[S_i]^{\diamond2} =0~\hbox{ for $\overline{i}\in[-p_1,0)$},
\end{equation*}
\begin{equation*}
    [S_i]\diamond[S_{i+1}]^{\diamond 2}-(1+q)[S_{i+1}]\diamond[S_i]\diamond[S_{i+1}]+q[S_{i+1}]^{\diamond2}\diamond[S_i]=0 ~\hbox{ for $\overline{i}\in[0,p_2)$},
  \end{equation*}
  \begin{equation*}
   [S_i]^{\diamond2}\diamond[S_{i+1}]-(1+q)[S_i]\diamond[S_{i+1}]\diamond[S_i]+q[S_{i+1}]\diamond[S_i]^{\diamond2}=0 ~\hbox{ for $\overline{i}\in[0,p_2)$},
  \end{equation*}
  \begin{equation*}
  [S_i]\diamond[S_{i'}]-[S_{i'}]\diamond[S_i]=0 ~\hbox{ if }|i-i'|>1.
  \end{equation*}
This, by Proposition~\ref{prop:Hall-subalgebra-in-root-category} (a), implies that $\ch_0(\cc)$ is generated by $\{[S_i]\}_{i\in \mathbb{Z}}$ subject to relations which are obtained from the above relations by twisting with $\langle-,-\rangle_o$.
For example,
\begin{align*}
  [S_i][S_{i+1}]^2 &= q^{2\langle S_{i+1}, S_i\rangle_o+\langle S_{i+1},S_{i+1}\rangle_o}[S_i]\diamond[S_{i+1}]^{\diamond 2}=q^{2\langle S_{i+1}, S_i\rangle_o}[S_i]\diamond[S_{i+1}]^{\diamond 2} \\
  {[S_{i+1}][S_i][S_{i+1}]} &= q^{\langle S_i,S_{i+1}\rangle_o+\langle S_{i+1},S_{i}\rangle_o+\langle S_{i+1},S_{i+1}\rangle_o}{[S_{i+1}]\diamond[S_i]\diamond[S_{i+1}]}\\
  &=q^{\langle S_i,S_{i+1}\rangle_o+\langle S_{i+1},S_{i}\rangle_o}{[S_{i+1}]\diamond[S_i]\diamond[S_{i+1}]} \\
{[S_{i+1}]^2[S_i]} &= q^{2\langle S_{i}, S_{i+1}\rangle_o+\langle S_{i+1},S_{i
  	+1}\rangle_o}{[S_{i+1}]^{\diamond2}\diamond[S_i]}= q^{2\langle S_{i}, S_{i+1}\rangle_o}{[S_{i+1}]^{\diamond2}\diamond[S_i]},
\end{align*}
because $\langle S_{i+1},S_{i+1}\rangle_o=0$ for any $i\in\mathbb{Z}$, by Lemma~\ref{lem:orbital-Euler-form-zigzag}.
Therefore,
\begin{equation*}
[S_i][S_{i+1}]^{2}-(1+q^{-1})q^{(S_{i+1},S_i)_o}[S_{i+1}][S_i][S_{i+1}]+q^{-1+2(S_{i+1},S_i)_o}[S_{i+1}]^{2}[S_i]=0~\text{if }\overline{i}\in[-p_1,0),
\end{equation*}
 \begin{equation*}
[S_i][S_{i+1}]^{2}-(1+q)q^{(S_{i+1},S_i)_o}[S_{i+1}][S_i][S_{i+1}]+q^{1+2(S_{i+1},S_i)_o}[S_{i+1}]^{2}[S_i]=0~\text{if }\overline{i}\in[0,p_2).
\end{equation*}
The desired relations then follow from Corollary~\ref{cor:antisymmetric-orbital-Euler-form-zigzag}. The other two relations of degree $3$ are obtained similarly.

To verify the relations of degree $2$, we have by Proposition~\ref{prop:Hall-subalgebra-in-root-category} (a)
\begin{align*}
[S_i][S_{i'}]=q^{\langle S_{i'},S_i\rangle_o}[S_i]\diamond[S_{i'}],~~
[S_{i'}][S_i]=q^{\langle S_{i},S_{i'}\rangle_o}[S_{i'}]\diamond[S_{i}].
\end{align*}
So
\begin{align*}
[S_i][S_{i'}]-q^{(S_{i'},S_{i})_o}[S_{i'}][S_i]=0.
\end{align*}
The desired relations then follow from Corollary~\ref{cor:antisymmetric-orbital-Euler-form-zigzag}.
\end{proof}

Secondly, we verify the relations in groups (2) and (3). The relations in group (2) follow from Corollary~\ref{cor:commutator-for-neighbouring-pieces-simples}, Lemma~\ref{lem:Euler-form-zigzag} and Corollary~\ref{cor:antisymmetric-orbital-Euler-form-zigzag}. The relations in group (3) directly follow from Corollary~\ref{cor:commutator-for-pieces-faraway} and Lemma~\ref{lem:Euler-form-zigzag} and Corollary~\ref{cor:antisymmetric-orbital-Euler-form-zigzag}.

\medskip
Finally, by the results in the preceding two steps, the assignment $x_{i,j}\mapsto [\Sigma^{j}S_i]$ ($i\in\mathbb{Z}$ and $j=0,1,\ldots,d-1$) defines a homomorphism $\ch\to\ch(\cc)$ of $\mathbb{Q}$-algebras, which we denote by $\varphi$.
Notice that groups (2) and (3) consist of $q$-commutative relations for $x_{i,j}$ and $x_{i',j'}$,where $i,i'\in\mathbb{Z}$ and $j,j'=0,1,\ldots,d-1$ with $j\neq j'$, and there is exactly one such relation for each pair $\{x_{i,j},x_{i',j'}\}$. It follows that
\[
\ch=\ch_0\otimes_{\mathbb{Q}}\ch_1\otimes_{\mathbb{Q}}\cdots\otimes_{\mathbb{Q}}\ch_{d-1}
\] as $\mathbb{Q}$-vector space. By Lemma~\ref{lemma-Hall-alg-basis-2}, we have
\[
\ch(\cc)=\ch_0(\cc)\otimes_\mathbb{Q}\ch_1(\cc)\otimes_\mathbb{Q}\cdots\otimes_\mathbb{Q}\ch_{d-1}(\cc)
\]
as $\mathbb{Q}$-vector space. Moreover, by Lemma~\ref{lem:relations-in-the-first-group-zigzag}, $\varphi$ restricts to isomorphisms $\ch_j\to\ch_j(\cc)$ for all $j=0,1,\ldots,d-1$, and hence it is an isomorphism. The proof of Theorem~\ref{thm-for-zigzag} is finished.

\section{Derived Hall algebras of twisted root categories of $Q^{l}$ ($d\geq 2$)}
\label{s:derived-Hall-of-linear}

Let $Q^{l}$ be the quiver with quiver-automorphism $\sigma=\sigma_{0,1}$ defined in the beginning of Section~\ref{s:quiver-generalised-zigzag-linear}. For integers $d\geq 1$ and $r\neq 0$, consider the $d$-root categories of $\rep^b(Q^{l})$ and $\rep^+(Q^{l})$ twisted by $\sigma^{-r}$:
\[
\cc=\cd^b(\rep^b(Q^{l}))/\Sigma^d\circ\sigma^{-r}, \text{and  }\cc^+=\cd^b(\rep^+(Q^{l}))/\Sigma^d\circ\sigma^{-r}.
\]
In this section, we will give for $d\geq 2$ an explicit description of the derived Hall algebras $\ch(\cc)$ and $\ch(\cc^+)$ by providing sets of generators and relations. The case $d=1$ is exceptional due to the fact that the canonical functors $\rep^b(Q^{l})\to\cc$ and $\rep^+(Q^{l})\to\cc^+$ are not fully faithful, and we will deal with this case in a separate paper.

\subsection{The derived Hall algebra $\ch(\cc)$}
\label{ss:derived-Hall-of-linear-bounded}
Let $\ch=\ch(0,1,d,r)$ be the $\mathbb{Q}$-algebra generated by $x_{i,j}$ ($i\in\mathbb{Z}$ and $j=0,1,\ldots,d-1$) subject to relations which will be given after the statement of Theorem~\ref{thm-for-linear}.

\begin{theorem}\label{thm-for-linear}
The assignment $x_{i,j}\mapsto [\Sigma^{j}S_i]$ ($i\in\mathbb{Z}$ and $j=0,1,\ldots,d-1$) defines an isomorphism $\ch\to\ch(\cc)$ of $\mathbb{Q}$-algebras.
\end{theorem}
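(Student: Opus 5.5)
The proof will follow the same three steps as the proof of Theorem~\ref{thm-for-zigzag}, with $Q_{p_1,p_2}$ replaced by $Q^l$, $\ca=\rep^b(Q^l)$ and $\sigma$ replaced by $\sigma^r$. Throughout I assume $d\geq 2$. For every $r\neq 0$ the automorphism $\sigma^r$ of $\rep^b(Q^l)$ satisfies (HF), so $\ch(\cc)$ is well defined by Lemma~\ref{lem:when-is-root-category-left-homologically-finite}~(c).

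\emph{Step 1: same shift degree.} For each $j$, let $\ch_j(\cc)$ be the subalgebra spanned by the classes $[\Sigma^jM]$ with $M\in\rep^b(Q^l)$, and let $\ch_j\subseteq\ch$ be the subalgebra generated by the $x_{i,j}$. Applying $\Sigma^j$ reduces everything to $j=0$. By Proposition~\ref{prop:Hall-subalgebra-in-root-category}~(a), the multiplication on $\ch_0(\cc)$ is the Ringel--Hall multiplication of $\ch(\rep^b(Q^l))$ twisted by $q^{\langle N,M\rangle_o}$. By Lemma~\ref{lem:RH-algebra-of-quiver-with-linear-orientation}, $\ch(\rep^b(Q^l))$ is presented by the generators $[S_i]$ subject to commutation relations for $|i-i'|>1$ and the quantum Serre relations with parameter $q$.

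Twisting a presentation by such a bicharacter-type factor gives again a presentation: the twisted products of generators differ from the untwisted ones by nonzero scalars. So $\ch_0(\cc)$ is presented by the twisted relations. The degree-3 relations become Serre relations whose coefficients involve $q^{(S_{i+1},S_i)_o}$ and $q^{\langle S_i,S_i\rangle_o}$. Unlike the zigzag case, $\langle S_i,S_i\rangle_o$ need not vanish: for $r=1$ it equals $(-1)^{d+1}$ by Lemma~\ref{lem:orbital-Euler-form-linear}. This value has to be tracked, although it cancels in the Serre relations since each monomial contains the same multiset of letters. The degree-2 relations become $q$-commutations $x_{i,j}x_{i',j}=q^{(S_{i'},S_i)_o}x_{i',j}x_{i,j}$. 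These hold for all $i\neq i'$, including neighbours at distance $>1$ where $(S_{i'},S_i)_o\neq 0$. The exponents are read off from Corollary~\ref{cor:antisymmetric-orbital-Euler-form-linear}, split into the cases $r=-1$, $r=1$ and $|r|>1$, matching the three families of relations.

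\emph{Step 2: different shift degrees.} For neighbouring degrees $j+1,j$, I apply Corollary~\ref{cor:commutator-for-neighbouring-pieces-simples} with $S=S_i$ and $S'=S_{i'}$, together with Lemma~\ref{lem:Euler-form-linear}. This gives
\[x_{i,j+1}x_{i',j}-q^{(S_i,S_{i'})_o-\langle S_{i'},S_i\rangle}x_{i',j}x_{i,j+1}\]
equal to a constant. The constant is nonzero only when $i'=i$, where it is $1/((q-1)q^{\langle S_i,S_i\rangle_o})$, or when $d=2$ and $S_{i'}\cong\sigma^rS_i=S_{i-r}$. Here $|\Aut S|=q-1$, and $\langle S,\sigma S\rangle_o$ is computed from Lemma~\ref{lem:orbital-Euler-form-linear}. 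With the convention $x_{i,d}=x_{\sigma^r(i),0}$, the index shift by $r$ produces the $\delta_{k,1}\delta_{d,2}$-type correction.

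For degrees $j+t$ and $j$ with $2\leq t\leq d-2$, Corollary~\ref{cor:commutator-for-pieces-faraway} gives pure $q$-commutations with exponent $(-1)^{t-1}((S_i,S_{i'})_o-\langle S_{i'},S_i\rangle)$.

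\emph{Step 3: bijectivity.} Steps 1 and 2 give a homomorphism $\varphi\colon\ch\to\ch(\cc)$. The relations between different degrees are $q$-commutations, possibly with constant terms. There is exactly one such relation for each pair $x_{i,j},x_{i',j'}$ with $j\neq j'$, and each one allows $x_{i',j'}x_{i,j}$ to be rewritten in the opposite order up to lower-degree terms. Hence $\ch_0\otimes\cdots\otimes\ch_{d-1}\to\ch$, induced by multiplication, is surjective. Since $\rep^b(Q^l)$ is artinian, Lemma~\ref{lemma-Hall-alg-basis-2} shows that $\ch(\cc)=\ch_0(\cc)\otimes\cdots\otimes\ch_{d-1}(\cc)$ as vector spaces, via ordered products. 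Because $\varphi$ is an isomorphism on each factor by Step 1, the composite $\ch_0\otimes\cdots\otimes\ch_{d-1}\to\ch\to\ch(\cc)$ is bijective. Therefore $\varphi$ is an isomorphism.

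I expect the main obstacle to be the bookkeeping in Step 2 for $d=2$. There the constant term from $S'\cong\sigma^rS$ must be matched with the convention $x_{i,d}=x_{\sigma(i),0}$. This requires $\langle S,\sigma^rS\rangle_o-2\langle S,S\rangle_o$, which is not symmetric in the three cases for $r$, in particular when $r=1$ and $\langle S_i,S_i\rangle_o\neq0$. Other places need care as well. In Step 3 the ordering argument must handle the constants for $d=2$. In Step 1 one must check that the twisted presentation yields exactly the stated Serre coefficients, for example $(1+q)q^{(S_{i+1},S_i)_o}$.
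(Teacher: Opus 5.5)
Your proposal is correct and follows the paper's route. For each shift degree you twist the Ringel--Hall presentation of Lemma~\ref{lem:RH-algebra-of-quiver-with-linear-orientation} via Proposition~\ref{prop:Hall-subalgebra-in-root-category}~(a), take the relations between different degrees from Corollaries~\ref{cor:commutator-for-neighbouring-pieces-simples} and~\ref{cor:commutator-for-pieces-faraway}, and conclude with the basis of Lemma~\ref{lemma-Hall-alg-basis-2}. Your Step~3 argument (the ordered multiplication map $\ch_0\otimes\cdots\otimes\ch_{d-1}\to\ch$ is surjective, and its composite into $\ch(\cc)$ is bijective) is a slightly more careful derivation of the decomposition of $\ch$ that the paper states directly. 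One small correction: in Step~1 the twisted $q$-commutations hold only for $|i-i'|>1$, not for all $i\neq i'$.
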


Now we describe the relations. By convention we set $x_{i,d}=x_{\sigma(i),0}$. There are three cases:

\smallskip
\noindent Case $|r|=1$:
\begin{itemize}
   \item[(1)] the relations for $x_{i,j}$ and $x_{i',j}$ ($i,i'\in\mathbb{Z}$ and $j=0,1,\ldots,d-1$):
   \begin{gather*}
   	x_{i,j}x_{i+1,j}^2-q^{r\cdot[(-1)^d-1]}(1+q^{r})x_{i+1,j}x_{i,j}x_{i+1,j}+q^{r\cdot[2\cdot(-1)^d-1]}x_{i+1,j}^2x_{i,j},\\
   	x_{i,j}^2x_{i+1,j}-q^{r\cdot[(-1)^d-1]}(1+q^{r})x_{i,j}x_{i+1,j}x_{i,j}+q^{r\cdot[2\cdot(-1)^d-1]}x_{i+1,j}x_{i,j}^2,\\
   	x_{i,j}x_{i+rk,j}-q^{(-1)^{k}[(-1)^{d+1}+1]}x_{i+rk,j}x_{i,j} ~\hbox{ for $k>1$;}
   \end{gather*}

 \item[(2)] the relations for $x_{i,j+1}$ and $x_{i',j}$ ($i,i'\in\mathbb{Z}$ and $j=0,1,\ldots,d-1$):
 \begin{gather*}
 	x_{i,j+1}x_{i-r,j}-q^{(-1)^{d}}x_{i-r,j}x_{i,j+1}+\frac{\delta_{d,2}\cdot q^{\frac{r+1}{2}}}{q-1}q^{(-1)^{d}},\\
 	x_{i,j+1}x_{i-kr,j}-q^{(-1)^{k}[(-1)^{d+1}+1]}x_{i-kr,j}x_{i,j+1}+\frac{\delta_{d,2}\cdot q^{(-1)^{k}[(-1)^{d+1}+1]}}{q-1}~\hbox{ for $k>1$},\\
 	x_{i,j+1}x_{i,j}-q^{-1}x_{i,j}x_{i,j+1}-\frac{q^{\frac{r+1}{2}\cdot(-1)^d}}{q-1},\\
 	x_{i,j+1}x_{i+kr,j}-q^{(-1)^{k}[(-1)^{d}-1]}x_{i+kr,j}x_{i,j+1} ~\hbox{ for $k>0$};
 \end{gather*}
   
  \item[(3)] the relations for $x_{i,j+t}$ and $x_{i',j}$ ($i,i'\in\mathbb{Z}$, $j=0,1,\ldots,d-1$ and $2\leq t\leq d-2$):
  \begin{gather*}
  	x_{i,j+t}x_{i-r,j}-q^{(-1)^{d+t-1}}x_{i-r,j}x_{i,j+t},\\
  	x_{i,j+t}x_{i-kr,j}-q^{(-1)^{k+t-1}[(-1)^{d+1}+1]}x_{i-kr,j}x_{i,j+t} ~\hbox{ for $k>1$},\\
  	x_{i,j+t}x_{i,j}-q^{(-1)^{t}}x_{i,j}x_{i,j+t} ~\hbox{ for $k\geq0$},\\
  	x_{i,j+t}x_{i+kr,j}-q^{(-1)^{k+t-1}[(-1)^{d}-1]}x_{i+kr,j}x_{i,j+t} ~\hbox{ for $k>0$}.
  \end{gather*}
\end{itemize}

\smallskip
\noindent Case $r=2$:
\begin{itemize}
   \item[(1)] the relations for $x_{i,j}$ and $x_{i',j}$ ($i,i'\in\mathbb{Z}$ and $j=0,1,\ldots,d-1$):
  \begin{gather*}
    x_{i,j}x_{i+1,j}^2-q^{(-1)^{d+1}}(1+q)x_{i+1,j}x_{i,j}x_{i+1,j}+q^{2\cdot(-1)^{d+1}+1}x_{i+1,j}^2x_{i,j},\\  
   x_{i,j}^2x_{i+1,j}-q^{(-1)^{d+1}}(1+q)x_{i,j}x_{i+1,j}x_{i,j}+q^{2\cdot(-1)^{d+1}+1}x_{i+1,j}x_{i,j}^2,\\
   x_{i,j}x_{i+(2k-1),j}-q^{(-1)^{dk+1}}x_{i+(2k-1),j}x_{i,j} ~\hbox{ for }k>1,\\
    x_{i,j}x_{i+2k,j}-q^{(-1)^{dk}}x_{i+2k,j}x_{i,j} ~\hbox{ for }k>0;
  \end{gather*}

\item[(2)] the relations for $x_{i,j+1}$ and $x_{i',j}$ ($i,i'\in\mathbb{Z}$ and $j=0,1,\ldots,d-1$):
  \begin{gather*}
  x_{i,j+1}x_{i-2k,j}-q^{(-1)^{dk}}x_{i-2k,j}x_{i,j+1}+\frac{\delta_{d,2}\delta_{k,0}}{q-1}q^{(-1)^{dk}} ~\hbox{ for } k>0,\\
  x_{i,j+1}x_{i-(2k-1),j}-q^{(-1)^{dk+1}+\delta_{k,1}} x_{i-(2k-1),j}x_{i,j+1}~\hbox{ for } k>0,\\
    x_{i,j+1}x_{i+2k,j}-q^{(-1)^{dk+1}}x_{i+2k,j}x_{i,j+1}-\frac{\delta_{k,0}}{q-1} ~\hbox{ for } k\geq0,\\
    x_{i,j+1}x_{i+(2k-1),j}-q^{(-1)^{dk}}x_{i+(2k-1),j}x_{i,j+1} ~\hbox{ for } k>0;
  \end{gather*}

\item[(3)] the relations for $x_{i,j+t}$ and $x_{i',j}$ ($i\in\mathbb{Z}$, $j=0,1,\ldots,d-1$ and $2\leq t\leq d-2$):
  \begin{gather*}
 x_{i,j+t}x_{i-2k,j}-q^{(-1)^{dk+t-1}}x_{i-2k,j}x_{i,j+t} ~\hbox{ for } k>0,\\
  x_{i,j+t}x_{i-(2k-1),j}-q^{(-1)^{t-1}\cdot[(-1)^{dk+1}+\delta_{k,1}]}x_{i-(2k-1),j}x_{i,j+t} ~\hbox{ for } k>0,\\
    x_{i,j+t}x_{i+2k,j}-q^{(-1)^{dk+t}}x_{i+2k,j}x_{i,j+t} ~\hbox{ for } k\geq0,\\
    x_{i,j+t}x_{i+(2k-1),j}-q^{(-1)^{dk+t-1}}x_{i+(2k-1),j}x_{i,j+t} ~\hbox{ for } k>0.
  \end{gather*}

\end{itemize}

\smallskip
\noindent Case $|r|>1$ but $r\neq2$:
\begin{itemize}
   \item[(1)] the relations for $x_{i,j}$ and $x_{i',j}$ ($i,i'\in\mathbb{Z}$ and $j=0,1,\ldots,d-1$):
  \begin{equation*}
    x_{i,j}x_{i+1,j}^2-(1+q)x_{i+1,j}x_{i,j}x_{i+1,j}+qx_{i+1,j}^2x_{i,j},
  \end{equation*}
  \begin{equation*}
   x_{i,j}^2x_{i+1,j}-(1+q)x_{i,j}x_{i+1,j}x_{i,j}+qx_{i+1,j}x_{i,j}^2,
  \end{equation*}
  \begin{equation*}
    x_{i,j}x_{i+(kr-1),j}-q^{(-1)^{dk+1}}x_{i+(kr-1),j}x_{i,j} ~\hbox{ for }k>0,
  \end{equation*}
  \begin{equation*}
    x_{i,j}x_{i+kr,j}-q^{(-1)^{dk}}x_{i+kr,j}x_{i,j} ~\hbox{ for }k>0,
  \end{equation*}
  \begin{equation}\label{relation:commutative-in-the-same-piece}
    x_{i,j}x_{i',j}-x_{i',j}x_{i,j} ~\hbox{ for other $i, i'$};
  \end{equation}

\item[(2)] the relations for $x_{i,j+1}$ and $x_{i',j}$ ($i,i'\in\mathbb{Z}$ and $j=0,1,\ldots,d-1$):
  \begin{equation*}
  x_{i,j+1}x_{i-kr,j}-q^{(-1)^{dk}}x_{i-kr,j}x_{i,j+1}+\frac{\delta_{d,2}\delta_{k,0}}{q-1}q^{(-1)^{dk}} ~\hbox{ for } k>0,
  \end{equation*}
    \begin{equation*}
    x_{i,j+1}x_{i-(kr-1),j}-q^{(-1)^{dk+1}\cdot (1-\delta_{k,0})}x_{i-(kr-1),j}x_{i,j+1} ~\hbox{ for } k\geq0,
  \end{equation*}
  \begin{equation*}
    x_{i,j+1}x_{i+kr,j}-q^{(-1)^{dk+1}}x_{i+kr,j}x_{i,j+1}-\frac{\delta_{k,0}}{q-1} ~\hbox{ for } k\geq0,
  \end{equation*}
  \begin{equation*}
   x_{i,j+1}x_{i+(kr-1),j}-q^{(-1)^{dk}}x_{i+(kr-1),j}x_{i,j+1} ~\hbox{ for } k\geq0,
  \end{equation*}
  \begin{equation}\label{relation:commutative-in-the-neighbour-pieces}
    x_{i,j+1}x_{i',j}-x_{i',j}x_{i,j+1} ~\hbox{ for other $i, i'$ };
  \end{equation}

\item[(3)] the relations for $x_{i,j+t}$ and $x_{i',j}$ ($i\in\mathbb{Z}$, $j=0,1,\ldots,d-1$ and $2\leq t\leq d-2$):
  \begin{equation*}
  x_{i,j+t}x_{i-kr,j}-q^{(-1)^{dk+t-1}}x_{i-kr,j}x_{i,j+t} ~\hbox{ for } k>0,
  \end{equation*}
    \begin{equation*}
   x_{i,j+t}x_{i-(kr-1),j}-q^{(-1)^{dk+t}\cdot (1-\delta_{k,0})}x_{i-(kr-1),j}x_{i,j+t}~\hbox{ for } k\geq0,
  \end{equation*}
  \begin{equation*}
    x_{i,j+t}x_{i+kr,j}-q^{(-1)^{dk+t}}x_{i+kr,j}x_{i,j+t} ~\hbox{ for } k\geq0,
  \end{equation*}
   \begin{equation*}
   x_{i,j+t}x_{i+(kr-1),j}-q^{(-1)^{dk+t-1}}x_{i+(kr-1),j}x_{i,j+t} ~\hbox{ for } k\geq0,
  \end{equation*}
  \begin{equation}\label{relation:commutative-in-the-far-pieces}
    x_{i,j+1}x_{i',j}-x_{i',j}x_{i,j+1} ~\hbox{ for other $i, i'$ }.
  \end{equation}
\end{itemize}

Note that if $r=-2$, the relations (\ref{relation:commutative-in-the-same-piece}), (\ref{relation:commutative-in-the-neighbour-pieces}) and (\ref{relation:commutative-in-the-far-pieces}) disappear. Moreover, the relations in group (3) are empty for $d=2$ and $d=3$. The strategy of the proof of Theorem~\ref{thm-for-linear} is the same as that of Theorem~\ref{thm-for-zigzag}.
For $j=0,1,\ldots,d-1$, let $\ch_j(\cc)$ be the subspace of $\ch(\cc)$ spanned by $[\Sigma^j M]$, $M\in\rep^b(Q^l)$. Then it is a subalgebra of $\ch(\cc)$, by Proposition~\ref{prop:Hall-subalgebra-in-root-category} (a). Let $\ch_j$ be the subalgebra of $\ch$ generated by $x_{ij}$ ($i\in\mathbb{Z}$).

\begin{lemma}
\label{lem:relations-in-the-first-group-linear}
Let $j=0,1,\ldots,d-1$.
The assignment $x_{i,j}\mapsto [\Sigma^j S_i]$ ($i\in\mathbb{Z}$) defines an isomorphism $\ch_j\to\ch_j(\cc)$ of $\mathbb{Q}$-algebras.
\end{lemma}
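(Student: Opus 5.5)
The plan mirrors the proof of Lemma~\ref{lem:relations-in-the-first-group-zigzag}. First, the automorphism $\Sigma^j$ of $\ch(\cc)$ maps $\ch_0(\cc)$ isomorphically onto $\ch_j(\cc)$ and sends $[S_i]$ to $[\Sigma^jS_i]$. The defining relations of $\ch_j$ are the same for every $j$. So it suffices to treat $j=0$.

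For $j=0$, Lemma~\ref{lem:A-proper-abelian} and Proposition~\ref{prop:Hall-subalgebra-in-root-category}(a) give $[M][N]=q^{\langle N,M\rangle_o}[M]\diamond[N]$ for $M,N\in\rep^b(Q^l)$. Thus $\ch_0(\cc)$ is the Ringel--Hall algebra $\ch(\rep^b(Q^l))$ with its multiplication twisted by the form $\langle-,-\rangle_o$. This form is bilinear in dimension vectors: $\{-,-\}$ is multiplicative by Lemma~\ref{lem:derived-Hall-number-for-proper-abelian-subcategory}, and the Euler form is additive on short exact sequences. Hence the twist is a 2-cocycle twist by a bilinear form on the Grothendieck group.

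The mechanism is as follows. For monomials in the $[S_i]$ one has $[S_{i_1}]\cdots[S_{i_m}]=q^{\sum_{a<b}\langle S_{i_b},S_{i_a}\rangle_o}[S_{i_1}]\diamond\cdots\diamond[S_{i_m}]$. We take the presentation of $\ch(\rep^b(Q^l))$ from Lemma~\ref{lem:RH-algebra-of-quiver-with-linear-orientation}, which has quantum Serre relations between $x_i,x_{i+1}$ and commutativity for $|i-i'|>1$. Each relation there is homogeneous in dimension vector, so rewriting each monomial in terms of twisted products changes the coefficients only by powers of $q$. Concretely, the twisted relations are obtained as follows.

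In degree two, $[S_i][S_{i'}]=q^{(S_{i'},S_i)_o}[S_{i'}][S_i]$ whenever $|i-i'|>1$.

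In degree three, take $x_ix_{i+1}^2-(1+q)x_{i+1}x_ix_{i+1}+qx_{i+1}^2x_i$. After twisting it becomes
\[
[S_i][S_{i+1}]^2-(1+q)q^{(S_{i+1},S_i)_o+c}[S_{i+1}][S_i][S_{i+1}]+q^{1+2(S_{i+1},S_i)_o+2c}[S_{i+1}]^2[S_i],
\]
where $c=\langle S_{i+1},S_{i+1}\rangle_o$ appears uniformly. The analogous formula holds for the other Serre relation. Note that, unlike the zigzag case, $\langle S_i,S_i\rangle_o$ need not vanish: it equals $(-1)^{d+1}$ when $r=1$. One should check that this diagonal contribution cancels, since it enters every monomial of a given relation equally and so factors out.

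The main work is then substituting the values from Lemma~\ref{lem:orbital-Euler-form-linear} and Corollary~\ref{cor:antisymmetric-orbital-Euler-form-linear} case by case.
\begin{itemize}
\item For $|r|=1$, one needs $(S_{i+1},S_i)_o=(-1)^{d+1}$ or $(-1)^d$ according to the sign of $r$. After normalising, this should give the coefficient $q^{r[(-1)^d-1]}(1+q^r)$. For $r=-1$, I expect this to require multiplying the relation by a power of $q$ and interchanging the roles of the two sides.
\item For $r=2$, the relevant value is $(S_{i+1},S_i)_o=(-1)^{d+1}$.
\item For $|r|>2$, and also for $r=-2$, the relevant value is $(S_{i+1},S_i)_o=0$.
\end{itemize}
The $q$-commutation relations follow by reading off $(S_i,S_{i'})_o$ in each row of the corollary. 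This bookkeeping of signs and exponents, especially in the $|r|=1$ case where every pair of generators $q$-commutes nontrivially, is the main obstacle.

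Finally, isomorphism. The map $\ch_0\to\ch_0(\cc)$ is well defined by the above and surjective because the $[S_i]$ generate $\ch(\rep^b(Q^l))$. For injectivity, observe that cocycle twisting is invertible: twisting back by $-\langle-,-\rangle_o$ transforms the relations of $\ch_0$ exactly into those of $\ch^l$. This identifies $\ch_0$ with $\ch^l$ twisted by the same cocycle, and Lemma~\ref{lem:RH-algebra-of-quiver-with-linear-orientation} then gives bijectivity.
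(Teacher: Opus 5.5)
Your route is the paper's: reduce to $j=0$ via the automorphism $\Sigma^j$, use Proposition~\ref{prop:Hall-subalgebra-in-root-category}~(a) to view $\ch_0(\cc)$ as $\ch(\rep^b(Q^l))$ with multiplication twisted by $\langle-,-\rangle_o$, transport the presentation of Lemma~\ref{lem:RH-algebra-of-quiver-with-linear-orientation}, and read off exponents from Corollary~\ref{cor:antisymmetric-orbital-Euler-form-linear}. All relations are homogeneous for the dimension-vector grading, so the twist is invertible and carries a presentation to a presentation; your argument is correct and makes explicit what the paper leaves implicit. You are also right that $\langle S_i,S_i\rangle_o\neq 0$ for $r=1$ is harmless. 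However, the bookkeeping you single out as the main work is wrong in two places, and as written the $|r|=1$ case would not check out.

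First, the displayed twisted Serre relation must not contain $c$ at all. By your own monomial formula, each of $[S_i][S_{i+1}]^2$, $[S_{i+1}][S_i][S_{i+1}]$ and $[S_{i+1}]^2[S_i]$ contains exactly one pair of $S_{i+1}$'s. Their exponents are therefore $2\langle S_{i+1},S_i\rangle_o+c$, $\langle S_i,S_{i+1}\rangle_o+\langle S_{i+1},S_i\rangle_o+c$ and $2\langle S_i,S_{i+1}\rangle_o+c$. After normalising one gets
\[
[S_i][S_{i+1}]^2-(1+q)q^{(S_{i+1},S_i)_o}[S_{i+1}][S_i][S_{i+1}]+q^{1+2(S_{i+1},S_i)_o}[S_{i+1}]^2[S_i]=0,
\]
exactly as in the zigzag case, and similarly for the other Serre relation.

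Second, your value for $r=1$ is wrong. Corollary~\ref{cor:antisymmetric-orbital-Euler-form-linear}~(2) with $k=1$ gives $(S_{i+1},S_i)_o=-[(-1)^{d+1}+1]=(-1)^d-1\in\{0,-2\}$, which is never $\pm1$. Inserting this into the relation above yields the coefficients $q^{(-1)^d-1}(1+q)$ and $q^{2(-1)^d-1}$, which is the stated relation for $r=1$; inserting $\pm1$ does not. For $r=-1$, part (1) gives $(S_{i+1},S_i)_o=(-1)^{d+1}$. Since $q^{(-1)^{d+1}}(1+q)=q^{-[(-1)^d-1]}(1+q^{-1})$ and $q^{1+2(-1)^{d+1}}=q^{-[2(-1)^d-1]}$, the stated relation comes out directly, with no rescaling or interchange of sides.

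Your values for $r=2$, $r=-2$ and $|r|>2$, and your degree-two rule, are correct. Note also that for $|r|=1$ and even $d$ the far-apart generators actually commute, since $(-1)^{d+1}+1=0$.
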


This lemma verifies the relations in group (1). Its proof is similar to that of Lemma~\ref{lem:relations-in-the-first-group-zigzag}. In particular, the isoclass of the indecomposable representation $V_{a,b}$, as an element of $\ch_0(\cc)$, can be represented by a non-commutative polynomial of the $[S_i]'s$. Let $\Phi_{a,b}(x_i|i\in\mathbb{Z})$ be the non-commutative polynomial in variables $x_i,i\in\mathbb{Z}$, such that 
\begin{equation}\label{equation-for-indecomposable}
	[V_{a,b}]=\Phi_{a,b}([S_i]|i\in\mathbb{Z}).
\end{equation}
This polynomial will be used in Section~\ref{ss:derived-Hall-of-linear-$+$}.

The relations in group (2) follow from Corollary~\ref{cor:commutator-for-neighbouring-pieces-simples}, Lemma~\ref{lem:Euler-form-linear} and Corollary~\ref{cor:antisymmetric-orbital-Euler-form-linear}. The relations in group (3) directly follow from Corollary~\ref{cor:commutator-for-pieces-faraway} and Lemma~\ref{lem:Euler-form-linear} and Corollary~\ref{cor:antisymmetric-orbital-Euler-form-linear}.  
Therefore, the assignment $x_{i,j}\mapsto [\Sigma^{j}S_i]$ ($i\in\mathbb{Z}$ and $j=0,1,\ldots,d-1$) defines a homomorphism $\ch\to\ch(\cc)$ of $\mathbb{Q}$-algebras, which we denote by $\varphi$. Moreover, 
\[
\ch=\ch_0\otimes_{\mathbb{Q}}\ch_1\otimes_{\mathbb{Q}}\cdots\otimes_{\mathbb{Q}}\ch_{d-1}
~~\text{and}~~
\ch(\cc)=\ch_0(\cc)\otimes_\mathbb{Q}\ch_1(\cc)\otimes_\mathbb{Q}\cdots\otimes_\mathbb{Q}\ch_{d-1}(\cc)
\]
as $\mathbb{Q}$-vector spaces.  Since $\varphi$ restricts to isomorphisms $\ch_j\to\ch_j(\cc)$ for all $j=0,1,\ldots,d-1$ by Lemma~\ref{lem:relations-in-the-first-group-linear}, it follows that $\varphi$ is an isomorphism.

\subsection{The derived Hall algebra $\ch(\cc^+)$}
\label{ss:derived-Hall-of-linear-$+$}

It follows by Lemma~\ref{lem:when-is-root-category-left-homologically-finite} (c) that the following three conditions are equivalent:
\begin{itemize}
\item[-] $\cc^+$ is left locally homologically finite,
\item[-] $\sigma^r$ satisfies the condition (HF),
\item[-] $r>0$. 
\end{itemize}
So in the rest of this subsection we assume $r>0$.

Let $\ch^+=\ch^+(0,1,d,r)$ be the $\mathbb{Q}$-algebra generated by $x_{i,j}$ and $z_{i,j}$ ($i\in\mathbb{Z}$ and $j=0,1,\ldots,d-1$) subject to relations which will be given after the statement of Theorem~\ref{thm-for-linear-$+$}.

\begin{theorem}\label{thm-for-linear-$+$}
	The assignment $x_{i,j}\mapsto [\Sigma^{j}S_i]$, $z_{i,j}\mapsto [\Sigma^{j}P_i]$ ($i\in\mathbb{Z}$ and $j=0,1,\ldots,d-1$) defines an isomorphism $\ch^+\to\ch(\cc^+)$ of $\mathbb{Q}$-algebras.
\end{theorem}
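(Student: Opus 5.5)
The proof will follow the same three-step strategy as for Theorem~\ref{thm-for-linear}, now with $\ca=\rep^+(Q^l)$ and $r>0$. Condition (HF) holds, so $\ch(\cc^+)$ is well defined, and $\ca$ is an extension-closed proper abelian subcategory of $\cc^+$ by Lemma~\ref{lem:A-proper-abelian}.

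\emph{Step 1: relations within one shift $j$.} For each $j$, let $\ch_j(\cc^+)$ be the span of the classes $[\Sigma^jM]$ with $M\in\rep^+(Q^l)$. By Proposition~\ref{prop:Hall-subalgebra-in-root-category}~(a) this is a subalgebra. It is isomorphic (after applying $\Sigma^{-j}$) to the Ringel--Hall algebra $\ch(\rep^+(Q^l))$ with multiplication twisted by $q^{\langle N,M\rangle_o}$. Lemma~\ref{lem:RH-algebra-of-quiver-with-linear-orientation-$+$} gives a presentation of $\ch(\rep^+(Q^l))$ by the $[S_i]$ and $[P_i]$. Twisting each relation uses Lemma~\ref{lem:orbital-Euler-form-linear}, Lemma~\ref{lem:orbital-Euler-form-linear-projectives} and Corollaries~\ref{cor:antisymmetric-orbital-Euler-form-linear}, \ref{cor:antisymmetric-orbital-Euler-form-linear-projectives}. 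This produces the relations of group~(1): the twisted Serre relations among the $x_{i,j}$, the $q$-commutations among the $z_{i,j}$, and the twisted versions of $x_iz_i=qz_ix_i$ and $x_{i-1}z_i-z_ix_{i-1}=z_{i-1}$. The twisting is diagonal on the PBW-type monomials, so the tensor decomposition $\ch^{l,+}=\ch^{l,0}\otimes\ch^l$ survives. Hence $x_{i,j}\mapsto[\Sigma^jS_i]$, $z_{i,j}\mapsto[\Sigma^jP_i]$ gives an isomorphism $\ch^+_j\to\ch_j(\cc^+)$, exactly as in Lemma~\ref{lem:relations-in-the-first-group-zigzag}.

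\emph{Step 2: relations between different shifts.} Pairs $x,x$ are handled by Corollary~\ref{cor:commutator-for-neighbouring-pieces-simples}. Pairs $z,z$ are handled by Corollary~\ref{cor:commutator-for-neighbouring-pieces-projectives}. In $\rep^+(Q^l)$ a nonzero map $P_i\to P_{i'}$ is a monomorphism with cokernel $V_{i',i-1}$ (for $i'\le i$). So the correction terms $\sum_C F^C[C]$ are multiples of $[V_{i',i-1}]$ or of $[\Sigma V_{\dots}]$. I would rewrite these as $\Phi_{a,b}(x_{\ast,j})$ or $\Phi_{a,b}(x_{\ast,j+1})$ using the polynomials from (\ref{equation-for-indecomposable}). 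This is why $\Phi_{a,b}$ was introduced.

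Mixed pairs are handled by Corollary~\ref{cor:commutator-for-neighbouring-pieces-projective-simple}. Part~(a) gives the correction $[\Sigma\rad P_i]=[\Sigma P_{i+1}]$, i.e.\ a term $z_{i+1,j+1}$. Part~(b) covers $[\Sigma S][P]$, since $\Hom(S_j,P_i)=0$ always. When $d=2$ its hypothesis in (c) must be checked case by case; where it fails, (b) supplies the extra term. Shifts $2\le t\le d-2$ give pure $q$-commutations by Corollary~\ref{cor:commutator-for-pieces-faraway}. All exponents come from Lemma~\ref{lem:Euler-form-linear} and the corollaries above. Together these yield a homomorphism $\varphi\colon\ch^+\to\ch(\cc^+)$.

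\emph{Step 3: bijectivity.} Every relation in groups (2) and (3) has the form
\[
ab=q^{c}\,ba+(\text{terms in a single } \ch^+_{j'}).
\]
Using these relations one rewrites any word as a sum of ordered products in $\ch^+_0\ch^+_1\cdots\ch^+_{d-1}$, so $\ch^+$ is spanned by such products. On the other side, $\rep^+(Q^l)$ is noetherian, so every kernel-cokernel sequence in it is stationary. Lemma~\ref{lemma-Hall-alg-basis-2} then makes $\ch(\cc^+)=\ch_0(\cc^+)\otimes\cdots\otimes\ch_{d-1}(\cc^+)$ as vector spaces. Since $\varphi$ is an isomorphism on each factor by Step~1, it sends a spanning set onto a basis, and is therefore an isomorphism.

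\emph{Main obstacle.} I expect the main obstacle in Step~3 to be the inhomogeneous correction terms, which complicate the spanning argument and the bookkeeping. The $z$–$z$ and $x$–$z$ relations across neighbouring shifts contain lower terms, some of them in degree $j$ and some in degree $j+1$. The $d=2$ case is worst, because of the $\sigma$-wraparound in Corollary~\ref{cor:commutator-for-neighbouring-pieces-projectives}. To show the rewriting terminates, I would first try filtering by the total $\leq_\Delta$-order of Lemma~\ref{lemma-Hall-alg-basis-1}, checking that each correction term is strictly $\Delta$-smaller.
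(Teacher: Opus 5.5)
Your proposal is the paper's proof. It uses the same three steps with the same inputs: twisting the presentation of $\ch(\rep^+(Q^l))$ by $\langle-,-\rangle_o$, the cross-shift corollaries of Section~\ref{ss:Hall-algebra-of-twisted-root-category-relations}, and Lemma~\ref{lemma-Hall-alg-basis-2} via noetherianity of $\rep^+(Q^l)$. Your Step~3 is in fact more careful than the paper's last step, which simply asserts $\ch^+=\ch^+_0\otimes_{\mathbb{Q}}\cdots\otimes_{\mathbb{Q}}\ch^+_{d-1}$. Only spanning by ordered products is needed; injectivity then follows from the comparison with $\ch(\cc^+)$.

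The termination issue you flag is real. The $\Phi_{a,b}$ corrections lengthen words, so counting inversions alone does not work. However, $\leq_\Delta$ is the wrong tool: it orders isoclasses of $\cc^+$, so it cannot control rewriting inside $\ch^+$ before injectivity of $\varphi$ is known.

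Instead, order lexicographically the triple (number of $z$-letters, number of $x$-letters, number of pairs of letters in the wrong order of shifts). In every relation of groups (2) and (3):
\begin{itemize}
\item the main term lowers the third entry and leaves the first two unchanged;
\item a $z$--$z$ swap has corrections ($\Phi$'s and scalars) with two fewer $z$'s;
\item a $z$--$x$ swap has at most a single $z$ as correction, so one $x$ fewer;
\item an $x$--$x$ swap has at most a scalar correction.
\end{itemize}
Hence the rewriting terminates.

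One small slip: the hypothesis $\Hom_\ca(\sigma^{-1}S,P)=0$ of Corollary~\ref{cor:commutator-for-neighbouring-pieces-projective-simple}~(c) never fails here, because no simple representation embeds in any $P_i$. The extra $d=2$ terms, such as $\delta_{d,2}z_{i+1,j}$, come from part~(b), which governs the other ordering $x_{\cdot,j+1}z_{\cdot,j}$.
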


Now we describe the relations. By convention we set $x_{i,d}=x_{\sigma(i),0}$ and $z_{i,d}=z_{\sigma(i),0}$. The variables $\{x_{ij}\mid i\in\mathbb{Z},j=0,1,\ldots,d-1\}$ generate $\ch=\ch(0,1,d,r)$ as a subalgebra of $\ch^+$. For the remaining relations there are also three cases (all of the following $k$'s are positive without further remark):

\smallskip
\noindent Case $r=1$:
\begin{itemize}
	\item[(1)] the relations for $z_{i,j}$ and $x_{i',j}$, and for $z_{i,j}$ and $z_{i',j}$ ($i,i'\in\mathbb{Z}$ and $j=0,1,\ldots,d-1$):
	\begin{equation}\label{relation:projective-by-simple-and-projective-$r=1$}
		x_{i-1,j}z_{i,j}-q^{(-1)^{d}}z_{i,j}x_{i-1,j}-q^{(-1)^d}z_{i-1} 
	\end{equation}
	\begin{equation*}
		x_{i-k,j}z_{i,j}-q^{(-1)^{dk}}z_{i,j}x_{i-k,j} ~\hbox{ for $k>1$}
	\end{equation*}
	\begin{equation*}
		x_{i,j}z_{i,j}-q^{(-1)^{d}+1}z_{i,j}x_{i,j} 
	\end{equation*}
	\begin{equation*}
		x_{i+k-1,j}z_{i,j}-q^{(-1)^{dk}}z_{i,j}x_{i+k-1,j} ~\hbox{ for $k>1$.}
	\end{equation*}
	\begin{equation*}
		z_{i,j}z_{i-k,j}-q^{-1-\sum\limits_{p=1}^{k}(-1)^{dp} }z_{i-k,j}z_{i,j}
	\end{equation*}
	\begin{equation*}
		z_{i,j}z_{i+k,j}-q^{1+\sum\limits_{p=1}^{k}(-1)^{dp} }z_{i+k,j}z_{i,j}
	\end{equation*}

	\item[(2)] the relations for $z_{i,j+1}$ and $z_{i',j}$, for $z_{i,j+1}$ and $x_{i',j}$, and for $x_{i',j+1}$ and $z_{i,j}$ ($i,i'\in\mathbb{Z}$ and $j=0,1,\ldots,d-1$):
	\begin{equation*}
		z_{i,j+1}z_{i-k,j}-q^{\sum\limits_{p=1}^{k}(-1)^{dp} }z_{i-k,j}z_{i,j+1}-\Phi_{i-k,i-1}(x_{\alpha,j})_{\alpha\in\mathbb{Z}}+\delta_{d,2}\delta_{i,i-k+1}\frac{q^{(-1)^d}}{q-1}
	\end{equation*}
	\begin{equation}\label{relation:simple-by-projectives-$r=1$}
		z_{i,j+1}z_{i,j}-q^{-1}z_{i,j}z_{i,j+1}-\frac{1}{q-1}+\delta_{d,2}q^{-1}x_{i,j+1}
	\end{equation}
	\begin{equation*}
		z_{i,j+1}z_{i+k,j}-q^{-1+\sum\limits_{p=1}^{k}(-1)^{dp+1}}z_{i+k,j}z_{i,j+1}+\delta_{d,2}q^{-1+\sum\limits_{p=1}^{k}(-1)^{dp+1}}\Phi_{i,i+k}(x_{\alpha,j+1})_{\alpha\in\mathbb{Z}}
	\end{equation*}
	\begin{equation*}
		z_{i,j+1}x_{i-k,j}-q^{(-1)^{dk}}x_{i-k,j}z_{i,j+1}
		~\hbox{ for $k>1$}
	\end{equation*}
	\begin{equation*}
		z_{i,j+1}x_{i-1,j}-q^{(-1)^{d}+1 }x_{i-1,j}z_{i,j+1}
	\end{equation*}
	\begin{equation*}
		z_{i,j+1}x_{i,j}-q^{(-1)^{d}}x_{i,j}z_{i,j+1}-q^{(-1)^d}z_{i+1,j+1}
	\end{equation*}
	\begin{equation*}
		z_{i,j+1}x_{i+k-1,j}-q^{(-1)^{dk}}x_{i+k-1,j}z_{i,j+1}~\hbox{ for $k>1$}
	\end{equation*}
	\begin{equation*}
		x_{i,j+1}z_{i+k,j}-q^{(-1)^{dk+1}}z_{i+k,j}x_{i,j+1}
	\end{equation*}
	\begin{equation*}
		x_{i,j+1}z_{i,j}-q^{(-1)^{d+1}-1}z_{i,j}x_{i,j+1} 
	\end{equation*}
	\begin{equation*}
		x_{i,j+1}z_{i-1,j}-q^{-1}z_{i-1,j}x_{i,j+1}+\delta_{d,2}z_{i,j}
	\end{equation*}
	\begin{equation*}
		x_{i,j+1}z_{i-k+1,j}-q^{(-1)^{dk+1}}z_{i-k+1,j}x_{i,j+1} ~\hbox{ for $k>2$}
	\end{equation*}

	\item[(3)] the relations for $z_{i,j+t}$ and $z_{i',j}$, for $z_{i,j+t}$ and $x_{i',j}$, and for $x_{i',j+t}$ and $z_{i,j}$ ($i,i'\in\mathbb{Z}$ and $j=0,1,\ldots,d-1$):
	\begin{equation*}
		z_{i,j+t}z_{i-k,j}-q^{\sum\limits_{p=1}^{k}(-1)^{dp+t-1}}z_{i-k,j}z_{i,j+t}
	\end{equation*}
	\begin{equation*}
		z_{i,j+t}z_{i,j}-q^{(-1)^t}z_{i,j}z_{i,j+t}
	\end{equation*}
	\begin{equation*}
		z_{i,j+t}z_{i+k,j}-q^{(-1)^t+\sum\limits_{p=1}^{k}(-1)^{dp+t}}z_{i+k,j}z_{i,j+t}
	\end{equation*}
	\begin{equation*}
		z_{i,j+t}x_{i-k,j}-q^{(-1)^{dk+t-1}}x_{i-k,j}z_{i,j+t}		~\hbox{for $k>1$}
	\end{equation*}
	\begin{equation*}
		z_{i,j+t}x_{i-1,j}-q^{(-1)^{d+t-1}-(-1)^{t} }x_{i-1,j}z_{i,j+t}
	\end{equation*}
	\begin{equation*}
		z_{i,j+t}x_{i+k-1,j}-q^{(-1)^{dk+t-1}}x_{i+k-1,j}z_{i,j+t}
	\end{equation*}
	\begin{equation*}
		x_{i,j+t}z_{i+k,j}-q^{(-1)^{dk+t}}z_{i+k,j}x_{i,j+t}
	\end{equation*}
	\begin{equation*}
		x_{i,j+t}z_{i,j}-q^{(-1)^{d+t}+(-1)^t}z_{i,j}x_{i,j+t}
	\end{equation*}
	\begin{equation*}
		x_{i,j+t}z_{i-k+1,j}-q^{(-1)^{dk+t}}z_{i-k+1,j}x_{i,j+t} ~\hbox{ for $k>1$}
	\end{equation*}
\end{itemize}

\smallskip
\noindent Case $r>1$:
\begin{itemize}
	\item[(1)] the relations for $z_{i,j}$ and $z_{i',j}$, and for $z_{i,j}$ and $x_{i',j}$ ($i,i'\in\mathbb{Z}$ and $j=0,1,\ldots,d-1$):
	\begin{equation}\label{relation:projective-by-simple-and-projective-$r>1$}
		x_{i-1,j}z_{i,j}-z_{i,j}x_{i-1,j}-z_{i-1,j}
	\end{equation}
		\begin{equation*}
		x_{i-kr,j}z_{i,j}-q^{(-1)^{dk}}z_{i,j}x_{i-kr,j} 
	\end{equation*}
	\begin{equation*}
		x_{i+kr-1,j}z_{i,j}-q^{(-1)^{dk}}z_{i,j}x_{i+kr-1,j} 
	\end{equation*}
	\begin{equation*}
		x_{i',j}z_{i,j}-q^{\delta_{i,i'}}z_{i,j}x_{i',j}~\hbox{ for other $i,i'$} 
	\end{equation*}
	\begin{equation*}
		z_{i,j}z_{i',j}-q^{-1-\sum\limits_{p=1}^{k}(-1)^{dp} }z_{i',j}z_{i,j}~  (kr\leq i-i'<(k+1)r)
	\end{equation*}
	\begin{equation*}
		z_{i,j}z_{i',j}-q^{1+\sum\limits_{p=1}^{k}(-1)^{dp} }z_{i',j}z_{i,j}~  (-(k+1)r< i-i'\leq -kr)
	\end{equation*}
	\begin{equation*}
		z_{i,j}z_{i',j}-q^{\operatorname{sgn}(i'-i)}z_{i',j}z_{i,j}~\hbox{ for other $i,i'$}
	\end{equation*}

	\item[(2)] the relations for $z_{i,j+1}$ and $z_{i',j}$, for $z_{i,j+1}$ and $x_{i',j}$, and for $x_{i',j+1}$ and $z_{i,j}$ ($i,i'\in\mathbb{Z}$ and $j=0,1,\ldots,d-1$):
	\begin{equation*}
		z_{i,j+1}z_{i',j}-q^{\sum\limits_{p=1}^{k}(-1)^{dp}  }z_{i',j}z_{i,j+1}-\Phi_{i',i-1}(x_{\alpha,j})_{\alpha\in\mathbb{Z}}+\delta_{d,2}\delta_{i,i'+r}\frac{q^{(-1)^d}}{q-1}~(kr\leq i-i'<(k+1)r)
	\end{equation*}
	\begin{equation}\label{relation:simple-by-projectives-$r>1$}
		z_{i,j+1}z_{i',j}-z_{i',j}z_{i,j+1}-\Phi_{i',i-1}(x_{\alpha,j})_{\alpha\in\mathbb{Z}}+\delta_{d,2}\Phi_{i,i'+r-1}(x_{\alpha,j+1})_{\alpha\in\mathbb{Z}}~(0< i-i'<r)
	\end{equation}
	\begin{equation*}
		z_{i,j+1}z_{i,j}-q^{-1}z_{i,j}z_{i,j+1}-\frac{1}{q-1}+\delta_{d,2}q^{-1}\Phi_{i,i+r-1}(x_{\alpha,j+1})_{\alpha\in\mathbb{Z}}
	\end{equation*}
	\begin{equation*}
		z_{i,j+1}z_{i',j}-q^{-1}z_{i',j}z_{i,j+1}+\delta_{d,2}q^{-1}\Phi_{i,i'+r-1}(x_{\alpha,j+1})_{\alpha\in\mathbb{Z}}~(-r< i-i'<0)
	\end{equation*}
	\begin{equation*}
		z_{i,j+1}z_{i',j}-q^{-1+\sum\limits_{p=1}^{k}(-1)^{dp+1}}z_{i',j}z_{i,j+1}+\delta_{d,2}q^{-1+\sum\limits_{p=1}^{k}(-1)^{dp+1}}\Phi_{i,i'+r-1}(x_{\alpha,j+1})_{\alpha\in\mathbb{Z}}~(-(k+1)r<i-i'\leq -kr)
	\end{equation*}
	\begin{equation*}
		z_{i,j+1}x_{i-kr,j}-q^{(-1)^{dk}}x_{i-kr,j}z_{i,j+1}
	\end{equation*}
	\begin{equation*}
		z_{i,j+1}x_{i,j}-x_{i,j}z_{i,j+1}-z_{i+1,j+1}
	\end{equation*}
	\begin{equation*}
		z_{i,j+1}x_{i+kr-1,j}-q^{(-1)^{dk}}x_{i+kr-1,j}z_{i,j+1}~\hbox{ for $k\geq 0$} 
	\end{equation*}
	\begin{equation*}
		z_{i,j+1}x_{i',j}-x_{i',j}z_{i,j+1}~\hbox{ for other $i,i'$} 
	\end{equation*}
	\begin{equation*}
		x_{i-kr,j+1}z_{i,j}-q^{(-1)^{dk+1}}z_{i,j}x_{i-kr,j+1}~\hbox{ for $k\geq 0$}  
	\end{equation*}
	\begin{equation*}
		x_{i+r,j+1}z_{i,j}-z_{i,j}x_{i+r,j+1}+\delta_{d,2}z_{i+1,j} 
	\end{equation*}
	\begin{equation*}
		x_{i+kr-1,j+1}z_{i,j}-q^{(-1)^{dk+1}}z_{i,j}x_{i+kr-1,j+1}
	\end{equation*}
		\begin{equation*}
		x_{i',j+1}z_{i,j}-             z_{i,j}x_{i',j+1}~\hbox{ for other $i,i'$} 
	\end{equation*}

	\item[(3)] the relations for $z_{i,j+t}$ and $z_{i',j}$, for $z_{i,j+t}$ and $x_{i',j}$, and for $x_{i',j+t}$ and $z_{i,j}$ (for $x_{i,j+t}$ and $x_{i',j}$ are the same as in the previous section) ($i,i'\in\mathbb{Z}$ and $j=0,1,\ldots,d-1$):
	\begin{equation*}
		z_{i,j+t}z_{i',j}-q^{\sum\limits_{p=1}^{k}(-1)^{dp+t-1}}z_{i',j}z_{i,j+t}~(kr\leq i-i'<(k+1)r)
	\end{equation*}
	\begin{equation*}
		z_{i,j+t}z_{i',j}-z_{i',j}z_{i,j+t}~(0< i-i'<r)
	\end{equation*}
	\begin{equation*}
		z_{i,j+t}z_{i',j}-q^{(-1)^t}z_{i',j}z_{i,j+t}~(-r< i-i'\leq 0)
	\end{equation*}
	\begin{equation*}
		z_{i,j+t}z_{i',j}-q^{(-1)^t+\sum\limits_{p=1}^{k}(-1)^{dp+t}}z_{i',j}z_{i,j+t}~(-(k+1)r<i-i'\leq -kr)
	\end{equation*}
	\begin{equation*}
		z_{i,j+t}x_{i-kr,j}-q^{(-1)^{dk+t-1}}x_{i-kr,j}z_{i,j+t}
	\end{equation*}
	\begin{equation*}
		z_{i,j+t}x_{i+kr-1,j}-q^{(-1)^{dk+t-1}}x_{i+kr-1,j}z_{i,j+t}~\hbox{ for $k\geq 0$}
	\end{equation*}
	\begin{equation*}
		z_{i,j+t}x_{i',j}-x_{i',j}z_{i,j+t}~\hbox{ for other $i,i'$} 
	\end{equation*}
	\begin{equation*}
		x_{i-kr,j+t}z_{i,j}-q^{(-1)^{dk+t}}z_{i,j}x_{i-kr,j+t}~\hbox {for $k\geq 0$}
	\end{equation*}
	\begin{equation*}
		x_{i+kr-1,j+t}z_{i,j}-q^{(-1)^{dk+t}}z_{i,j}x_{i+kr-1,j+t}
	\end{equation*}
	\begin{equation*}
		x_{i',j+t}z_{i,j}-z_{i,j}x_{i',j+t}~\hbox{ for other $i,i'$} 
	\end{equation*}
\end{itemize}

\begin{remark}
\label{rem:other-generators}
We choose $\{z_{i,j}\}_{(i\in \mathbb{Z},j\in 0,1,\ldots, d-1)}\cup \{x_{i,j}\}_{(i\in \mathbb{Z},j\in 0,1,\ldots, d-1)}$ as a set of generators for $\ch^+(0,1,d,r)$. Actually, when $d>2$, $\{z_{i,j}\}_{(i\in \mathbb{Z},j\in 0,1,\ldots, d-1)}$ is already a set of generators. Indeed, it follows by the relation \eqref{relation:simple-by-projectives-$r=1$} when $r=1$ and the relation \eqref{relation:simple-by-projectives-$r>1$} when $r>1$ (notice that $\Phi_{i,i}(x_{\alpha,j})_{\alpha\in\mathbb{Z}}=x_{i,j}$) that $x_{i,j}$ can be generated by $\{z_{i,j}\}_{(i\in \mathbb{Z},j\in 0,1,\ldots, d-1)}$. This is also true for the case $d=2$ and $r=1$, but it does not seem to be true for $d=2$ and $r>1$ unless we allow infinite sums. The advantage of choosing $\{z_{i,j}\}_{(i\in \mathbb{Z},j\in 0,1,\ldots, d-1)}\cup \{x_{i,j}\}_{(i\in \mathbb{Z},j\in 0,1,\ldots, d-1)}$ as generators is that the presentation of relations is much simpler. 
	
It follows by induction from the relations \eqref{relation:projective-by-simple-and-projective-$r=1$} and \eqref{relation:projective-by-simple-and-projective-$r>1$} that $\{z_{0,j}\}_{(j\in 0,1,\ldots, d-1)}\cup \{x_{i,j}\}_{(i\in \mathbb{Z},j\in 0,1,\ldots, d-1)}$ is also a set of generators for $\ch^+(0,1,d,r)$. This is exactly the set of generators which G. Bobi\'{n}ski and J. Schmude used in \cite{BobinskiSchmude20} for the case $1\leq d\leq r$.  
\end{remark}

Note that the relations in group (3) are empty for $d=2$ and $d=3$. The strategy of the proof of Theorem~\ref{thm-for-linear-$+$} is the same as that of Theorem~\ref{thm-for-zigzag}.
For $j=0,1,\ldots,d-1$, let $\ch_j(\cc^+)$ be the subspace of $\ch(\cc^+)$ spanned by $[\Sigma^j M]$, $M\in\rep^+(Q^l)$. Then it is a subalgebra of $\ch(\cc^+)$, by Proposition~\ref{prop:Hall-subalgebra-in-root-category} (a). Let $\ch^+_j$ be the subalgebra of $\ch^+$ generated by $x_{ij}$ and $z_{ij}$ ($i\in\mathbb{Z}$).

\begin{lemma}
	\label{lem:relations-in-the-first-group-linear-projectives}
	Let $j=0,1,\ldots,d-1$.
	The assignment $x_{i,j}\mapsto [\Sigma^j S_i]$, $z_{i,j}\mapsto [\Sigma^j P_i]$  ($i\in\mathbb{Z}$) defines an isomorphism $\ch^+_j\to\ch_j(\cc^+)$ of $\mathbb{Q}$-algebras.
\end{lemma}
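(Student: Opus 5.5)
The plan follows the proof of Lemma~\ref{lem:relations-in-the-first-group-zigzag}. Since $\Sigma$ is an automorphism of $\ch(\cc^+)$ sending $\ch_0(\cc^+)$ onto $\ch_j(\cc^+)$, and the relations of $\ch^+_j$ do not depend on $j$, it suffices to treat $j=0$.

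By Lemma~\ref{lem:A-proper-abelian}, $\rep^+(Q^l)$ is an extension-closed proper abelian subcategory of $\cc^+$. Proposition~\ref{prop:Hall-subalgebra-in-root-category} (a) then says that $\ch_0(\cc^+)$ has basis $\cp(\rep^+(Q^l))$ and multiplication
\[
[M][N]=q^{\langle N,M\rangle_o}[M]\diamond[N].
\]
Lemma~\ref{lem:when-is-root-category-left-homologically-finite} (d) makes $\{-,-\}=q^{\langle -,-\rangle_o}$ bimultiplicative in the sense of Lemma~\ref{lem:derived-Hall-number-for-proper-abelian-subcategory}, so this twist is a $2$-cocycle-type twist. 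Hence $\ch_0(\cc^+)$ is the Ringel--Hall algebra $\ch(\rep^+(Q^l))$ twisted by the bilinear form $\langle-,-\rangle_o$ on the Grothendieck group. Because of this, a monomial $[Y_1]\cdots[Y_m]$ in the generators equals $q^{\sum_{a<b}\langle Y_b,Y_a\rangle_o}[Y_1]\diamond\cdots\diamond[Y_m]$.

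The first step is to transport the presentation of Lemma~\ref{lem:RH-algebra-of-quiver-with-linear-orientation-$+$} through this twist, exactly as in the zigzag case.
\begin{itemize}
\item[(i)] The degree-$3$ Serre relations among the $[S_i]$ are rewritten using $\langle S_{i},S_{i}\rangle_o$ and $(S_{i+1},S_i)_o$ from Lemma~\ref{lem:orbital-Euler-form-linear} and Corollary~\ref{cor:antisymmetric-orbital-Euler-form-linear}. This just recovers $\ch_0$ of Lemma~\ref{lem:relations-in-the-first-group-linear}.
\item[(ii)] Each quadratic relation of the form $u\diamond v=c\,v\diamond u$ becomes $[U][V]=c\,q^{(V,U)_o}[V][U]$.
\item[(iii)] The inhomogeneous relation $[S_{i-1}]\diamond[P_i]-[P_i]\diamond[S_{i-1}]=[P_{i-1}]$ becomes
\[
[S_{i-1}][P_i]-q^{(P_i,S_{i-1})_o}[P_i][S_{i-1}]=q^{\langle P_i,S_{i-1}\rangle_o}[P_{i-1}].
\]
Since $P_{i-1}$ is the middle term of $0\to P_i\to P_{i-1}\to S_{i-1}\to 0$, the twisting factor is the one attached to the pair $(S_{i-1},P_i)$.
\end{itemize}
Plugging in the values from Lemma~\ref{lem:orbital-Euler-form-linear-projectives}, Corollary~\ref{cor:antisymmetric-orbital-Euler-form-linear-projectives} and Lemma~\ref{lem:orbital-Euler-form-linear}, separately for $r=1$ and $r>1$, should give exactly the listed group (1) relations. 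For example, $\langle P_i,S_{i-1}\rangle_o$ equals $(-1)^d$ when $r=1$ and $0$ when $r>1$, which matches the coefficients $q^{(-1)^d}$ and $1$ in \eqref{relation:projective-by-simple-and-projective-$r=1$} and \eqref{relation:projective-by-simple-and-projective-$r>1$}. This shows that $x_{i,0}\mapsto[S_i]$, $z_{i,0}\mapsto[P_i]$ defines a homomorphism $\varphi_0\colon\ch^+_0\to\ch_0(\cc^+)$.

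For bijectivity I would reuse the structure of the proof of Lemma~\ref{lem:RH-algebra-of-quiver-with-linear-orientation-$+$}.
\begin{itemize}
\item[(i)] The $q$-commutation relations among the $z_{i,0}$, together with the relations $x z = (\text{scalar})\, z x + (\text{lower } z)$, show that $\ch^+_0$ is spanned by ordered monomials $z_{i_1}^{a_1}\cdots z_{i_k}^{a_k}\cdot w$ with $w\in\ch_0$. The relations let us move every $x$ to the right of the $z$'s; each move produces only $z$'s, so the process terminates.
\item[(ii)] The $[P_i]$ commute up to scalars in $\ch_0(\cc^+)$, and $[P][M]$ is a nonzero scalar multiple of $[P\oplus M]$ because $\Hom(P,M)=0$ and $\Ext^1(P,M)=0$. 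Hence the images of these ordered monomials are nonzero multiples of the basis elements $[P\oplus M]$, with $\ch_0\cong\ch_0(\cc^+)$-part coming from Lemma~\ref{lem:relations-in-the-first-group-linear}. So $\varphi_0$ maps a spanning set bijectively onto a basis and is an isomorphism.
\end{itemize}

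I expect the main obstacle to be the bookkeeping of orbital Euler form values that produce the exact $q$-exponents, in particular for $z_{i}z_{i'}$ with $kr\le i-i'<(k+1)r$. There one must combine $\langle P_i,P_{i'}\rangle$, which contributes $q^{\pm1}$ as in Lemma~\ref{lem:RH-algebra-of-quiver-with-linear-orientation-$+$}, with $(P_i,P_{i'})_o$ from Corollary~\ref{cor:antisymmetric-orbital-Euler-form-linear-projectives}, and keep track of the sign conventions for $i-i'$ in each range.
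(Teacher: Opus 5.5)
Your proposal is correct and is essentially the paper's argument: reduce to $j=0$, use Proposition~\ref{prop:Hall-subalgebra-in-root-category}~(a) to view $\ch_0(\cc^+)$ as $\ch(\rep^+(Q^l))$ twisted by $\langle-,-\rangle_o$, and transport the Ringel--Hall presentation, exactly as for the zigzag quiver; your explicit spanning-set argument for bijectivity simply replaces the general fact that such a twist carries a presentation to a presentation. One small correction: $\Hom(P,M)$ need not vanish (e.g.\ $\Hom(P_i,S_i)=\fk$); what you need is $\Ext^1(P,M)=0$, which already makes $[P][M]$ a nonzero multiple of $[P\oplus M]$, while $\Hom(M,P)=0$ shows that the Hall number $g^{P\oplus M}_{P,M}$ is exactly $1$.
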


This lemma verifies the relations in group (1). Its proof is similar to that of Lemma~\ref{lem:relations-in-the-first-group-zigzag}.
The relations in group (2) follow from Corollary~\ref{cor:commutator-for-neighbouring-pieces-projectives}, Corollary~\ref{cor:commutator-for-neighbouring-pieces-projective-simple}, Lemma~\ref{lem:Euler-form-linear} and Corollary~\ref{cor:antisymmetric-orbital-Euler-form-linear-projectives}. The relations in group (3) directly follow from Corollary~\ref{cor:commutator-for-pieces-faraway} and Lemma~\ref{lem:Euler-form-linear} and Corollary~\ref{cor:antisymmetric-orbital-Euler-form-linear-projectives}.  
Therefore, the assignment $x_{i,j}\mapsto [\Sigma^{j}S_i]$ ($i\in\mathbb{Z}$ and $j=0,1,\ldots,d-1$) defines a homomorphism $\ch^+\to\ch(\cc^+)$ of $\mathbb{Q}$-algebras, which we denote by $\varphi$. Moreover,
\[
\ch^+=\ch^+_0\otimes_{\mathbb{Q}}\ch^+_1\otimes_{\mathbb{Q}}\cdots\otimes_{\mathbb{Q}}\ch^+_{d-1}
~~\text{and}~~
\ch(\cc^+)=\ch_0(\cc^+)\otimes_\mathbb{Q}\ch_1(\cc^+)\otimes_\mathbb{Q}\cdots\otimes_\mathbb{Q}\ch_{d-1}(\cc^+)
\]
as $\mathbb{Q}$-vector spaces. Since $\varphi$ restricts to isomorphisms $\ch^+_j\to\ch_j(\cc^+)$ for all $j=0,1,\ldots,d-1$ by Lemma~\ref{lem:relations-in-the-first-group-linear-projectives}, it follows that $\varphi$ is an isomorphism.

\section{Derived Hall algebras of graded gentle one-cycle algebras}
\label{s:derive-Hall-algebra-of-graded-gentle-one-cycle-algebra}

In this section we describe the derived Hall algebra of a (finite-dimensional) graded gentle one-cycle algebra.

\medskip

Let $A$ be a graded gentle one-cycle algebra. If $A$ has finite global dimension, then the AG-invariant of $A$ is  $\{(p_1,p_1+d),(p_2,p_2-d)\}$ for some integers $p_1,p_2\geq 1$ and $d\geq 0$. If $A$ has infinite global dimension, then the AG-invariant of $A$ is $\{(q,q-d),(0,d)\}$ for some integers $q\geq 1$ and $d\in\mathbb{Z}$. See for example \cite[Lemma 6.2]{ChenYang25}. We have the following description of the perfect and finite-dimensional derived categories of a graded gentle one-cycle algebra by twisted root categories.

\begin{theorem}[{\cite[Theorem 6.6]{ChenYang25}}]
\label{thm:from-gentle-one-cycle-to-root-category}
Let $A$ be a graded gentle one-cycle algebra.
\begin{itemize}
\item[(a)] If the AG-invariant of $A$ is $\{(p_1,p_1+d),(p_2,p_2-d)\}$ for some integers $p_1,p_2,d\geq 1$, then there is a triangle equivalence $\per(A)\to\cd^b(\rep^b(Q_{p_1,p_2}))/\Sigma^d\sigma^{-1}$.
\item[(b)] If the AG-invariant of $A$ is $\{(q,q-d),(0,d)\}$ for some $q\geq 1$ and $d\neq 0$, then there is a triangle equivalence $\cd_{fd}(A)\to\cd^b(\rep^+(Q^l))/\Sigma^{|d|}\sigma^{-\mathrm{sgn}(d)q}$ which restricts to a triangle equivalence $\per(A)\to\cd^b(\rep^b(Q^l))/\Sigma^{|d|}\sigma^{-\mathrm{sgn}(d)q}$.
\end{itemize}
\end{theorem}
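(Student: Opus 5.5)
Since this result is quoted from \cite[Theorem 6.6]{ChenYang25}, we only sketch how one would prove it. The approach is a tilting-type argument inside the twisted root category combined with Keller's Morita theorem for algebraic triangulated categories.

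First, we reduce to normal forms. Graded gentle one-cycle algebras with the same AG-invariant are derived equivalent: for graded gentle algebras this follows from the surface models of \cite{LekiliPolishchuk20,OpperPlamondonSchroll18}, and in the one-cycle case also from direct tilting mutations. So for each invariant it suffices to treat one convenient representative $A$.
\begin{itemize}
\item[-] In case (a), take a cycle of $p_1+p_2$ arrows with zero relations placed only at the vertex separating the two arms.
\item[-] In case (b), take a cycle with $q$ arrows and the appropriate relations.
\end{itemize}
In both cases the grading is chosen to realise $d$.

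Secondly, we construct a generator $T$ of $\cc=\cd^b(\ca)/\Sigma^d\sigma^{-r}$ whose graded endomorphism algebra is $A$. In case (a), take $\ca=\rep^b(Q_{p_1,p_2})$, $r=1$, and let $T$ be a direct sum of $p_1+p_2$ suitable shifts of indecomposables $V_{a,b}$ lying in one $\sigma$-fundamental domain. Lemma~\ref{lem:when-is-root-category-left-homologically-finite}(a) computes
\[
\bigoplus_{l}\Hom_\cc(T,\Sigma^l T)=\bigoplus_p \Hom_{\cd}(T,\Sigma^{pd}\sigma^{-p}T),
\]
and the shifts are chosen so that this graded algebra is exactly the gentle cycle algebra $A$. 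The neighbouring morphisms give the arrows. The single nonzero composition around the cycle comes from the $\sigma$-twist, so the degree $d$ is recorded in the cycle. Case (b) is treated in the same way with $Q^l$ and $\sigma^{\mathrm{sgn}(d)q}$. We then show that $\mathrm{thick}(T)=\cc$: the simples $S_i$ in one fundamental domain lie in $\mathrm{thick}(T)$ by iterated cones along the zigzag, and these simples, together with $\Sigma$ and $\sigma=\Sigma^d F$, generate $\cc$.

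Thirdly, we apply Keller's theorem. The orbit category has a canonical dg enhancement, namely the triangulated hull of \cite{Keller05}, which here coincides with the orbit category. It is Hom-finite and Krull--Schmidt by Lemma~\ref{lem:when-is-root-category-left-homologically-finite}(b), hence idempotent complete. Therefore $\cc\simeq\per(\mathcal{E})$, where $\mathcal{E}$ is the dg endomorphism algebra of $T$.

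\textbf{Main obstacle.} The step I expect to be hardest is proving that $\mathcal{E}$ is formal, i.e. quasi-isomorphic to $A$ with zero differential. I would argue via $A_\infty$-minimal models. The $\Hom$ spaces are computed in $\cd^b$ of a hereditary category, where the Massey-type products between the $V_{a,b}$ vanish. I would then check, by a degree/path-length argument, that any higher product $m_n$ for $n\ge 3$ on $A$ must vanish. The point is that $A$ is gentle with one cycle, so a nonzero $m_n$ would have to map a sequence of arrows to a path, and the grading excludes all such possibilities; this is where $d\neq 0$ would be used. If this fails in general, I would instead invoke the intrinsic formality of graded gentle algebras via their Hochschild cohomology.

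Finally, for $\cd_{fd}(A)$ in (b), we repeat the argument in $\cc^+$ built from $\rep^+(Q^l)$. Under the equivalence of the previous step, the simple $A$-modules should correspond to objects built from the projectives $P_i$, which are not finite-dimensional. One then identifies $\cd_{fd}(A)=\mathrm{thick}(\text{simples})$ with $\cc^+$. Compatibility with the inclusion $\per(A)\subset\cd_{fd}(A)$ follows because $T$ lies in the image of $\rep^b(Q^l)$.
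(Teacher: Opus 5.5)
Your outline (reduce to a normal form, find a generator $T$ of $\cc$ with $\bigoplus_l\Hom_\cc(T,\Sigma^lT)\cong A$, apply Keller's Morita theorem) is a reasonable strategy. The problem is the step you flag as the main obstacle: both justifications you give for the formality of $\mathcal{E}$ are false, and $d\neq 0$ does not rescue them.

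Take your case (b) normal form, an oriented cycle $\alpha_1,\ldots,\alpha_q$ with all relations and total degree $\sum_i|\alpha_i|=q-d$ (as for the dual numbers).
\begin{itemize}
\item[-] When $d=2$, a higher product $m_q(\alpha_q,\ldots,\alpha_1)\in k\,e_1$ has exactly the right degree.
\item[-] When $d=1$, so do $m_{q+1}$ with value an arrow and $m_{2q}$ with value an idempotent.
\end{itemize}
So the grading alone does not exclude higher products. This really happens. Over the path algebra of $1\to 2$, take $U=S_2\oplus P_1\oplus S_1$. Then $\bigoplus_l\Hom(U,\Sigma^lU)$ is the oriented $3$-cycle $\iota\colon S_2\to P_1$, $\pi\colon P_1\to S_1$, $\epsilon\colon S_1\to\Sigma S_2$ of degrees $0,0,1$ with all compositions zero, i.e.\ your normal form with $q=3$, $d=2$. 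Its $A_\infty$-structure has a nonzero $m_3$ around the cycle, coming from the triangle $S_2\to P_1\to S_1\to\Sigma S_2$. This cannot be gauged away: the only possible correction terms $f(\pi,\iota)$ and $f(\epsilon,\pi)$ lie in zero spaces. Moreover $\per$ of this $A_\infty$-algebra is $\cd^b(\rep(1\to 2))$, not the infinite orbit category of part (b). Hence Massey products among the $V_{a,b}$ in a hereditary category need not vanish, and graded gentle algebras are not intrinsically formal. Note that $d=2$ lies inside the range $|d|\geq 2$ used in this paper.

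What is missing is an argument that uses the specific enhancement. The dg orbit category is graded by the orbit index $p$: morphisms are $\bigoplus_p\Hom(X,F^pY)$ and composition is additive in $p$. So the minimal model of $\mathcal{E}$ can be taken $p$-graded, and it is the orbit of the $A_\infty$-structure on the full subcategory of $\cd^b(\ca)$ on the objects $F^pT_i$. You must choose $T$ so that this covering category is formal.
\begin{itemize}
\item[-] In case (b), take $T$ a sum of shifts of $S_1,\ldots,S_q$. The Ext-category of all simples of $\rep^b(Q^l)$ is formal by Koszulity; there, path length really is a compatible Adams grading, which you did not establish in general. The dangerous $m_q$ above would have to lie in the $p=\pm1$ part of $\Hom_\cc(S_1,S_1)$, which vanishes by Lemma~\ref{lem:when-is-root-category-left-homologically-finite}~(a).
\item[-] In case (a), use the graded path algebra of $\tilde{\mathbb A}_{p_1,p_2}$ (no relations) as normal form, rather than one with zero relations, and sums of shifted projectives of $Q_{p_1,p_2}$ as $T$. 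Formality is then automatic: the covering category is a graded path category with zero differential, and graded path algebras have vanishing Hochschild cohomology in degrees $\geq 2$.
\end{itemize}
Alternatively, you can bypass formality by working from $A$. Pass to its $\mathbb{Z}$-Galois covering and regrade the tree-shaped cover to degree $0$; the degree of the cycle turns the deck transformation into $\Sigma^{d}\sigma^{-r}$. Identify the cover equivariantly with $\ca$, and use that $\per(A)$ is the triangulated hull of the orbit category, which by Keller's theorem is the orbit category itself.

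Your treatment of $\cd_{fd}(A)$ is also only asserted. Since $T$ does not generate $\cc^+$, you still have to prove that $\mathrm{RHom}(T,-)$ is fully faithful on $\cc^+$. One way is to realize $P_i$ as a homotopy colimit of the $V_{i,n}$ in a compactly generated category in which $T$ is a compact generator. Once that is done, the fact that the $\Sigma^jP_i$ go to shifted simple $A$-modules does follow from Lemma~\ref{lem:Euler-form-linear}~(3).
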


If follows from Theorem~\ref{thm:from-gentle-one-cycle-to-root-category} and \cite[Lemmas 4.3 and 4.7]{ChenYang25} that if $d\neq 0$, then the number of connected components of the Auslander--Reiten quiver of $\per(A)$ is $3|d|$, when $A$ has finite global dimension, or $|d|$, when $A$ has infinite global dimension.

Recall that we defined some $\mathbb{Q}$-algebras $\ch(p_1,p_2,d,r)$ in the beginning of Section~\ref{s:derived-Hall-of-zigzag}, $\ch(0,1,d,r)$ in the beginning of Section~\ref{ss:derived-Hall-of-linear-bounded} and $\ch^+(0,1,d,r)$ in the beginning of Section~\ref{ss:derived-Hall-of-linear-$+$}. The following theorem is an immediate consequence of Theorem~\ref{thm:from-gentle-one-cycle-to-root-category}, Theorem~\ref{thm-for-zigzag}, Theorem~\ref{thm-for-linear} and Theorem~\ref{thm-for-linear-$+$}.

\begin{theorem}
\label{thm:derived-hall-algebra}
Let $A$ be a graded gentle one-cycle algebra.
\begin{itemize}
\item[(a)] Assume the AG-invariant of $A$ is $\{(p_1,p_1+d),(p_2,p_2-d)\}$ for some integers $p_1,p_2\geq 1$ and $d\geq 2$.Then the derived Hall algebra $\ch(\per(A))$ is isomorphic to $\ch(p_1,p_2,d,-1)$.
\item[(b)] Assume the AG-invariant of $A$ is $\{(q,q-d),(0,d)\}$ for some $q\geq 1$ and $|d|\geq 2$. Then the derived Hall algebra $\ch(\per(A))$ is isomorphic to $\ch(0,1,|d|,\mathrm{sgn}(d)q)$.
\item[(c)] Assume the AG-invariant of $A$ is $\{(q,q-d),(0,d)\}$ for some $q\geq 1$ and $d\geq 2$. Then the derived Hall algebra $\ch(\cd_{fd}(A))$ is isomorphic to $\ch^+(0,1,d,q)$.
\end{itemize}
\end{theorem}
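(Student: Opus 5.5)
The plan is to combine Theorem~\ref{thm:from-gentle-one-cycle-to-root-category} with the presentations in Theorems~\ref{thm-for-zigzag}, \ref{thm-for-linear} and \ref{thm-for-linear-$+$}. The only extra input is that a triangle equivalence $\cc\to\cc'$ between Hom-finite, Krull--Schmidt, left locally homologically finite triangulated categories induces an isomorphism $\ch(\cc)\cong\ch(\cc')$. This holds because To\"en's formula for $F^Z_{X,Y}$ involves only cardinalities of Hom-sets, automorphism groups, morphisms with prescribed cone, and the numbers $\{Y,Z\}$, and all of these are preserved by a triangle equivalence. The equivalence commutes with $\Sigma$ up to natural isomorphism and sends triangles to triangles, so cones are preserved. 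Hence $[X]\mapsto[FX]$ is a bijection on $\cp(-)$ that respects structure constants.

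For (a), Theorem~\ref{thm:from-gentle-one-cycle-to-root-category}(a) gives $\per(A)\simeq\cd^b(\rep^b(Q_{p_1,p_2}))/\Sigma^d\sigma^{-1}$. This is the category $\cc$ of Section~\ref{s:derived-Hall-of-zigzag} with $r=1$. Theorem~\ref{thm-for-zigzag} presents $\ch(\cc)$ as $\ch(p_1,p_2,d,1)$, so I must match this with the stated $\ch(p_1,p_2,d,-1)$. I would reconcile the two by checking which sign convention for $\sigma$ matches \cite{ChenYang25}: the vertex map $i\mapsto i-n$ versus its inverse. The orbit category by $\Sigma^d\sigma^{-1}$ coincides with that of $\Sigma^{-d}\sigma$. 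Alternatively, one can relabel vertices $i\mapsto -i$ on $Q_{p_1,p_2}$, which swaps $p_1,p_2$ up to a shift and inverts $\sigma$. Before writing this down I would check carefully that the parameter is indeed $r=-1$, or else explain the relabelling. The local conditions needed for Section~\ref{s:derived-Hall-of-zigzag} all hold: $\sigma$ satisfies (HF), $d\ge2$, and $\rep^b$ is artinian, so Lemma~\ref{lemma-Hall-alg-basis-2} applies.

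For (b), Theorem~\ref{thm:from-gentle-one-cycle-to-root-category}(b) gives $\per(A)\simeq\cd^b(\rep^b(Q^l))/\Sigma^{|d|}\sigma^{-\mathrm{sgn}(d)q}$, which is the category $\cc$ of Section~\ref{ss:derived-Hall-of-linear-bounded} with parameters $(|d|,\mathrm{sgn}(d)q)$. Since $|d|\ge2$ and $r=\mathrm{sgn}(d)q\neq0$, Theorem~\ref{thm-for-linear} yields $\ch(0,1,|d|,\mathrm{sgn}(d)q)$.

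For (c), the same theorem gives $\cd_{fd}(A)\simeq\cd^b(\rep^+(Q^l))/\Sigma^d\sigma^{-q}$. Here $d\ge2$ and $r=q>0$, so $\cc^+$ is left locally homologically finite, as noted at the start of Section~\ref{ss:derived-Hall-of-linear-$+$}. Theorem~\ref{thm-for-linear-$+$} then gives $\ch^+(0,1,d,q)$. I expect the main obstacle to be the sign and convention bookkeeping in (a), namely the direction of $\sigma$ and $r=\pm1$. The invariance of the derived Hall algebra under triangle equivalence is routine.
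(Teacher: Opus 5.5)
Your route is the paper's own. Its proof is the single sentence that the theorem is an immediate consequence of Theorem~\ref{thm:from-gentle-one-cycle-to-root-category} and the presentation theorems of Sections~\ref{s:derived-Hall-of-zigzag} and~\ref{s:derived-Hall-of-linear}. Your remark that a triangle equivalence preserves To\"en's structure constants is exactly the glue that proof leaves implicit, and parts (b) and (c) are fine as you wrote them.

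For (a), the discrepancy you noticed is genuine, and neither repair you suggest removes it, so your proposal does not prove (a) as printed.

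\begin{itemize}
\item Section~\ref{s:derived-Hall-of-zigzag} normalises the orbit category as $\cd^b(\rep^b(Q_{p_1,p_2}))/\Sigma^d\circ\sigma^{-r}$ with $d>0$. So $\per(A)\simeq\cd^b(\rep^b(Q_{p_1,p_2}))/\Sigma^d\sigma^{-1}$ is the case $r=1$, and writing the same orbit as $\Sigma^{-d}\sigma$ changes nothing.
\item The relabelling $i\mapsto -i$ turns $Q_{p_1,p_2}$ into $Q_{p_2,p_1}$ and $\sigma$ into $\sigma^{-1}$. It therefore gives $\ch(p_1,p_2,d,1)\cong\ch(p_2,p_1,d,-1)$, with the indices swapped, not $\ch(p_1,p_2,d,-1)$.
\item For $p_1\neq p_2$, the algebra $\ch(p_1,p_2,d,-1)\cong\ch(p_2,p_1,d,1)$ is the one attached to the AG-invariant $\{(p_2,p_2+d),(p_1,p_1-d)\}$. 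That invariant belongs to algebras not derived equivalent to $A$, so no triangle equivalence, and nothing cited, produces it.
\end{itemize}

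What the cited results actually prove is $\ch(\per(A))\cong\ch(p_1,p_2,d,1)\cong\ch(p_2,p_1,d,-1)$. This is exactly the remark following the theorem, taking $r=1$ there. The $-1$ in (a), and in Theorem~\ref{main-theorem}(a), therefore appears to be a misprint that the paper's one-line proof passes over. You should state and prove (a) in this corrected form rather than try to force $r=-1$.

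An independent check also favours $\sigma^{-1}$. In $\rep^b(Q_{p_1,p_2})$ one finds $\tau^{-p_2}V_{-p_1,0}\cong\sigma V_{-p_1,0}$ and $\tau^{-p_1}V_{0,p_2}\cong\sigma^{-1}V_{0,p_2}$. Hence, in the orbit category by $\Sigma^d\sigma^{-1}$, the two $\mathbb{Z}A_\infty$ components satisfy $\mathbb{S}^{p_2}\cong\Sigma^{p_2-d}$ and $\mathbb{S}^{p_1}\cong\Sigma^{p_1+d}$, where $\mathbb{S}=\Sigma\tau$ is the Serre functor. Under the usual rule $\mathbb{S}^n\cong\Sigma^m$ for an AG pair $(n,m)$, this matches the pairs $(p_2,p_2-d)$ and $(p_1,p_1+d)$. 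Twisting by $\sigma$ instead would give $(p_2,p_2+d)$ and $(p_1,p_1-d)$.
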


\begin{remark}
Let $A$ be a graded gentle one-cycle algebra. If its AG-invariant is $\{(p_1r,p_1r+d),(p_2r,p_2r-d)\}$ for some integers $p_1,p_2,r\geq 1$ and $d\geq 2$, then according to the proof of \cite[Theorem 6.7]{ChenYang25}, there are triangle equivalences $\per(A)\to\cd^b(\rep^b(Q_{p_1,p_2}))/\Sigma^d\sigma^{-r}\to\cd^b(\rep^b(Q_{p_2,p_1}))/\Sigma^d\sigma^{r}$. It then follows by Theorem~\ref{thm-for-zigzag} that there are isomorphisms $\ch(\per(A))\cong\ch(p_1,p_2,d,r)\cong\ch(p_2,p_1,d,-r)$. This gives different presentations of $\ch(\per(A))$.
\end{remark}

\begin{remark}
Let $A$ be a graded gentle one-cycle algebra. If the AG-invariant of $A$ is $\{(p_1,p_1+d),(p_2,p_2-d)\}$ for some integers $p_1,p_2\geq 1$ and $d\geq 1$, then $A$ is derived equivalent to an ungraded gentle one-cycle algebra if and only if $1\leq d\leq p_2$. If the AG-invariant of $A$ is $\{(q,q-d),(0,d)\}$, then $A$ is derived equivalent to an ungraded gentle one-cycle algebra if and only if $1\leq d\leq q$. See \cite[Remark 6.3 (c)]{ChenYang25}. In other words, $A$ is derived equivalent to an ungraded gentle one-cycle algebra if and only if all integers appearing in the AG-invariant of $A$ are non-negative. In this case and when $A$ is of infinite global dimension, Theorem~\ref{thm:derived-hall-algebra} (b) and (c) are given in \cite{BobinskiSchmude20} in terms of the set of generators $\{z_{0,j}\}_{(j\in 0,1,\ldots, d-1)}\cup \{x_{i,j}\}_{(i\in \mathbb{Z},j\in 0,1,\ldots, d-1)}$ (see Remark~\ref{rem:other-generators}).
\end{remark}

\begin{remark}
\label{rem:d=0}
The cases $|d|=0,1$ are exceptional. The case $|d|=1$ will be treated in a separate paper. 
Now assume $d=0$. If the AG-invariant of $A$ is $\{(p_1,p_1),(p_2,p_2)\}$, then $\per(A)$ is triangle equivalent to $\cd^b(\ca)$, the bounded derived category of finite-dimensional representations $\ca=\rep^b(Q)$ of $Q$, where $Q$ is a/any quiver of type $\tilde{\mathbb{A}}_{p_1,p_2}$; if the AG-invariant of $A$ is $\{(p_2,p_2),(0,0)\}$, then $\per(A)$ is triangle equivalent to $\cd^b(\ca)$, the bounded derived category of finite-dimensional nilpotent representations $\ca=\rep_{\mathrm{nil}}^b(Q)$ of $Q$, where $Q$ is the cyclic quiver with $p_2$ vertices. In both cases, the Auslander--Reiten quiver of $\per(A)$ has infinitely many connected components. In both cases, a set of generators and relations of $\per(A)$ can be obtained by using \cite[Section 7]{Toen06} together with the structure of the Ringel--Hall algebras of $\ca$ provided by \cite[Theorem 1.1]{SevenhantvandenBergh01}.
\end{remark}

\def\cprime{$'$}
\providecommand{\bysame}{\leavevmode\hbox to3em{\hrulefill}\thinspace}
\providecommand{\MR}{\relax\ifhmode\unskip\space\fi MR }
\providecommand{\MRhref}[2]{%
  \href{http://www.ams.org/mathscinet-getitem?mr=#1}{#2}
}
\providecommand{\href}[2]{#2}

\end{document}